\documentclass[11pt,letterpaper]{amsart}

\usepackage{amsmath,amssymb,amsfonts,amsthm,mathrsfs,mathtools}
\usepackage[margin=1.15in]{geometry}
\usepackage{hyperref}
\usepackage{enumitem}
\usepackage{microtype}

\newcommand{\Z}{\mathbb{Z}}
\newcommand{\Q}{\mathbb{Q}}
\newcommand{\R}{\mathbb{R}}

\newcommand{\F}{\mathbb{F}}

\newcommand{\Ocal}{\mathcal{O}}
\newcommand{\disc}{\operatorname{disc}}
\newcommand{\Li}{\operatorname{Li}}
\newcommand{\Res}{\operatorname{Res}}
\newcommand{\Gal}{\operatorname{Gal}}
\newcommand{\Frob}{\operatorname{Frob}}
\newcommand{\GL}{\operatorname{GL}}

\newcommand{\vvp}{\nu} % p-adic valuation (user may replace)

\newcommand{\A}{\mathbf{A}}
\newcommand{\Gm}{\mathbf{G}_m}
\newcommand{\Ga}{\mathbf{G}_a}
\newcommand{\Spec}{\mathrm{Spec}}
\newcommand{\Pic}{\mathrm{Pic}}

\newcommand{\wedgeTop}{\bigwedge\nolimits^{\mathrm{top}}}
\newcommand{\OO}{\mathcal{O}}
\newcommand{\cM}{\mathcal{M}}
\newcommand{\cN}{\mathcal{N}}
\newcommand{\Aff}{\mathrm{Aff}}

\theoremstyle{plain}
\newtheorem{theorem}{Theorem}[section]
\newtheorem{proposition}[theorem]{Proposition}
\newtheorem{lemma}[theorem]{Lemma}
\newtheorem{corollary}[theorem]{Corollary}

\theoremstyle{definition}
\newtheorem{definition}[theorem]{Definition}

\theoremstyle{remark}
\newtheorem{remark}[theorem]{Remark}
\newtheorem{example}[theorem]{Example}

\theoremstyle{plain}

\title{Eisenstein-prime Obstruction Sieve for Monogenicity}

\author[K.-H. Nguyen-Dang]{Khai-Hoan Nguyen-Dang}
\address{Morningside Center of Mathematics, Chinese Academy of Sciences, No.\ 55, Zhongguancun East Road, Beijing 100190, China}
\email{khaihoann@gmail.com}
\subjclass[2020]{11R16, 11R21, 11R29, 11R44, 11N36, 11R04}

\keywords{pure number fields, Chebotarev density theorem, Kummer theory, sieve methods}

\date{\today}
\begin{document}
\maketitle
\begin{abstract}
Alp\"oge--Bhargava--Shnidman showed that even a strengthened \emph{no local obstruction} condition for monogenicity does not force a global power integral basis: in the full spaces of cubic and quartic fields, a positive proportion are non-monogenic yet satisfy this ABS fixed-sign condition \cite{ABSCubic,ABSQuartic}. This raises a natural family-level question: \emph{does the same phenomenon persist inside one-parameter families, where the local structure varies in a highly constrained way?}

In this paper we answer this in the negative for the pure fields $K_m=\Q(\alpha)$ with $\alpha^n=m$ ($n\ge 4$) and $m$ square-free. Writing $g(m)=[\Ocal_{K_m}:\Z[\alpha]]$, we prove that the set of square-free $m$ for which $g(m)>1$ but $K_m$ has \emph{no} ABS local obstruction has natural density $0$. Consequently, in the pure family monogenicity and $\alpha$--monogenicity have the same natural density. The proof isolates a reusable mechanism, which we call the \emph{Eisenstein-prime obstruction sieve}. The argument is packaged in an abstract template and transfers to other Eisenstein parameter families.
\end{abstract}

\tableofcontents
%==========================================================
\section{Introduction}
\label{sec:introduction}
%==========================================================

\subsection{Monogenicity, index forms, and fixed-sign local solvability}

Let $K/\Q$ be a number field of degree $n\ge2$ with ring of integers $\Ocal_K$. The field $K$ is \emph{monogenic} if $\Ocal_K=\Z[\theta]$ for some $\theta\in\Ocal_K$, equivalently if $\Ocal_K$ admits a power integral basis $\{1,\theta,\dots,\theta^{n-1}\}$. The study of monogenicity is a classical problem in algebraic number theory, originally intertwined with the computation of rings of integers,
discriminants, integral bases and Diophantine equations (see, for example, \cite{EG17, Gaal2019Book,GaalSurvey}). A particularly rich testing ground is the family of \emph{pure number fields} $K_m=\Q(\sqrt[n]{m})$, where one can simultaneously pursue:
(i) explicit integral bases and index computations, and
(ii) global statistics and density questions as $m$ varies.
A long line of work develops explicit integral bases for pure fields and reveals striking periodic dependence on congruence classes of $m$ (see \cite{GaalRemete2017,JKS2021,ND25}).  These explicit bases typically show that the order $\Z[\sqrt[n]{m}]$ can fail to be maximal on congruence-defined subfamilies, thereby separating $\alpha$-monogenicity from monogenicity.

Recall that fixing an orientation $\omega\in\bigwedge_\Z^n\Ocal_K$ identifies the classical \emph{index form}
$f_{K,\omega}:\Ocal_K\to\Z$ of degree $N=\frac{n(n-1)}2$ via
\[
1\wedge \beta\wedge \beta^2\wedge\cdots\wedge \beta^{n-1}
=
f_{K,\omega}(\beta)\,\omega.
\]
Monogenicity is then equivalent to the Diophantine condition that $f_{K,\omega}$ represents $\pm1$ over $\Z$. From the perspective of modern arithmetic geometry, the index form is not merely a convenient polynomial: it is the coordinate expression of a determinant section cutting out monogenic generators on a canonically defined moduli space of generators (see \cite{ABHS-I,ABHS-II} and \S\ref{sec:ABHS}). This perspective cleanly packages local obstructions and clarifies how twisting and local conditions interact with the existence of monogenic generators.

In parallel, the modern arithmetic-statistics program has produced deep theorems on counting number fields and on the distribution of arithmetic properties in large families, culminating in Bhargava's parametrizations and their many extensions. Against this backdrop, the work of Alp\"oge--Bhargava--Shnidman shows that, in the \emph{full} families of cubic and quartic fields, there exist global failures of monogenicity that persist even when every local obstruction vanishes. This demonstrates that monogenicity is not governed purely by local conditions in these generic
families, and it motivates the question of whether there are natural families where a local-to-global principle might nevertheless hold in an asymptotic sense. More precisely, they introduced a reinforced notion in which the sign on the right-hand side is fixed \emph{globally} across all places (Definition~\ref{def:ABS}). They proved that this fixed-sign local condition does \emph{not} force global monogenicity: when ordered by discriminant, a positive proportion of cubic fields and a positive proportion of quartic fields are \emph{non}-monogenic yet satisfy the ABS fixed-sign local solvability condition \cite{ABSCubic,ABSQuartic}. This provides a robust failure of a local--global principle for monogenicity in high-dimensional parameter spaces. This paper investigates whether the ABS phenomenon persists inside \emph{Eisenstein families}, where the local structure is constrained by a distinguished generator and where a \emph{moving totally ramified prime} is built into the parameterization. 

More precisely, fix an integer $n\ge4$ and consider the pure fields
\[
K_m=\Q(\alpha),\qquad \alpha^n=m,
\]
with $m\in\Z\setminus\{0,\pm 1\}$ square-free, i.e., $X^n-m$ is irreducible over $\Q$. Write $\Ocal_m:=\Ocal_{K_m}$ and denote the \emph{index}
\[
g(m):=[\Ocal_m:\Z[\alpha]]\in\Z_{\ge1}.
\]
Thus $g(m)=1$ is the distinguished-generator notion of \emph{$\alpha$--monogenicity}, while \emph{monogenicity} means $\Ocal_m=\Z[\theta]$ for \emph{some} $\theta\in\Ocal_m$.

Even in the pure family, $\alpha$--monogenicity is strictly stronger than monogenicity in small degrees. For example, for $n=2$ one has $\Ocal_{\Q(\sqrt{m})}=\Z[(1+\sqrt{m})/2]$ when $m\equiv1\pmod4$, so $\Z[\sqrt{m}]$ fails to be maximal on a positive-density set of parameters, although quadratic fields are always monogenic \cite{NeukirchANT}. Similarly, in degree $n=3$ the order $\Z[\sqrt[3]{m}]$ fails to be maximal on explicit congruence classes
(e.g.\ $m\equiv \pm1\pmod9$ in the cube-free setting), yet the ambient cubic fields remain monogenic by explicit integral basis descriptions \cite{AyginNguyen2021}. Consequently, for $n=2,3$ the density of monogenicity differs from the density of $\alpha$--monogenicity in the pure family. Beyond very low degrees (even where explicit integral bases and index form computations are available), a general density comparison between monogenicity and $\alpha$--monogenicity for pure fields has not,
to our knowledge, been previously accessible. The guiding question is therefore:

\begin{quote}
\emph{In the pure family, can a positive-density set of non-$\alpha$--monogenic fields still satisfy the ABS fixed-sign \emph{no local obstruction} condition?  In other words, do monogenicity and $\alpha$--monogenicity differ in density for $n\ge4$?}
\end{quote}

Our main results answer this in the \emph{negative} for all $n\ge4$ and isolate a reusable mechanism which explains why.

\subsection{Main results}

Our first theorem shows that, in the pure family, the ABS fixed-sign local condition is \emph{generically}
incompatible with the failure of $\alpha$--monogenicity.

\begin{theorem}[Theorem~\ref{thm:density-zero-ABS} ]
\label{thm:intro-density-zero}
Fix $n\ge4$ and let $m$ range over square-free integers with $X^n-m$ irreducible over $\Q$. Then the set of parameters $m$ such that $g(m)\ge2$ but $K_m$ has \emph{no} ABS fixed-sign local obstruction has two-sided natural density $0$.
\end{theorem}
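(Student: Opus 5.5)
The plan is to prove the statement by a sieve over the primes $q\mid m$. For each such prime, $X^n-m$ is Eisenstein at $q$ and $q$ is totally ramified in $K_m$. We show that each such $q$, independently and with probability bounded below, obstructs the ABS fixed-sign condition. Since a typical square-free $m$ has about $\log\log|m|$ prime factors, the probability that every $q\mid m$ fails to obstruct tends to $0$.

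\textbf{Step 1 (shape of the bad set).} First I will recall the standard index computation for pure fields with $m$ square-free. Only primes $p\mid n$ can divide $g(m)$, and $p\mid g(m)$ exactly when the Wieferich-type congruence $m^{p}\equiv m\pmod{p^2}$ holds. So $\{m: g(m)\ge2\}$ is a finite union of residue classes modulo $\prod_{p\mid n}p^2$, intersected with the square-free integers. The precise shape matters only in that it is defined by conditions at the finitely many primes $p\mid n$. It is therefore independent of the residues of $m/q$ modulo primes $q\nmid n$. It suffices to show that the subset with no ABS obstruction has density $0$ inside each such class.

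\textbf{Step 2 (local index form at an Eisenstein prime).} Fix $q\mid m$ with $q\nmid 2n$, and let $\pi=\alpha$ be a uniformizer of $\Ocal_m\otimes\Z_q$. For $\beta=a+c_1\alpha+\dots+c_{n-1}\alpha^{n-1}\in\Ocal_m\otimes\Z_q$, I will compute $f_{K_m,\omega}(\beta)$ modulo $q$, using the orientation $\omega=1\wedge\alpha\wedge\cdots\wedge\alpha^{n-1}$ and the ABHS determinant description. The aim is to prove $f(\beta)\equiv c_1^{N}\cdot u_0 \pmod q$ whenever $f(\beta)\in\Z_q^\times$. Here $N=n(n-1)/2$, and $u_0$ is an explicit unit, essentially a product of factors $\alpha^{i}-\zeta\alpha^{j}$ divided by the appropriate power of $\alpha$. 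I expect $u_0$ to be a fixed power of $m'=m/q$ modulo $q$, times a constant depending only on $n$. It follows that the fixed-sign condition at $q$ forces $\varepsilon u_0^{-1}$ to be an $N$-th power residue modulo $q$, hence a $d_q$-th power residue with $d_q=\gcd(N,q-1)$. For $q\equiv1\pmod{2N}$ we have $d_q=N\ge6$, since $n\ge4$. For such $q$, the requirement that both signs fail has conditional probability at least some $\delta>0$ as $m'\bmod q$ varies, provided the power of $m'$ in $u_0$ is not divisible by $d_q$. I must also check the archimedean place, which only decides which sign $\varepsilon$ is allowed; this is harmless.

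\textbf{Step 3 (sieve).} Take $\varepsilon\in\{\pm1\}$ fixed, and let $\mathcal P=\{q\equiv1\pmod{2N}\}$, which has Dirichlet density $1/\varphi(2N)$. I will count square-free $m\le X$ in a fixed class modulo $\prod_{p\mid n}p^2$ such that, for every $q\in\mathcal P$ with $q\mid m$, the quantity $\varepsilon u_0(m/q)^{-1}$ is a $d_q$-th power mod $q$. By Erdős--Kac (or Turán--Kubilius restricted to $\mathcal P$), almost all $m$ have at least $\tfrac12\varphi(2N)^{-1}\log\log X$ prime factors in $\mathcal P$. The conditions at distinct $q$ are residue conditions on $m$ modulo $q^2$. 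Restricting to $q\le Y$ with $Y$ growing slowly, the Chinese remainder theorem then bounds the count by $X\prod_{q\in\mathcal P,\,q\le Y}(1-\delta/q)+o(X)$. This product tends to $0$ as $Y\to\infty$, because $\sum_{q\in\mathcal P}1/q$ diverges.

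\textbf{Main obstacle.} The hard step is Step 2. I need the exact leading term $u_0$ of the local index form at an Eisenstein prime, and I must show it is a \emph{non-trivial} power of $m'$ modulo $d_q$-th powers. If the exponent were divisible by $N$, the obstruction would vanish identically. In that case I would instead use the $q$-adic higher-order terms, or choose a different subfamily of primes $q$ with smaller $d_q$. I would first compute $u_0$ via the Vandermonde $\prod_{i<j}(\beta_i-\beta_j)$ of conjugates $\beta_i=a+c_1\zeta^i\alpha+O(\alpha^2)$, which gives $c_1^{N}\alpha^{N}\prod_{i<j}(\zeta^i-\zeta^j)$ over the index of $\Z[\alpha]$ in this local ring. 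The unit part then involves $\disc(X^n-m)/q^{n-1}=\pm n^n m'^{\,n-1}q^{0}$ up to squares. This points to the exponent $n-1$ on $m'$ as the relevant one to test against $d_q$.
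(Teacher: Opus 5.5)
There is a genuine gap in Step~2, and it invalidates the mechanism Step~3 relies on.

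First, $\omega_\alpha:=1\wedge\alpha\wedge\cdots\wedge\alpha^{n-1}$ is not an orientation of $\Ocal_m$ in the sense of Definition~\ref{def:ABS} once $g(m)\ge2$. In $\bigwedge^n_\Z\Ocal_m$ one has $\omega_\alpha=\pm g(m)\,\omega_0$ for a generator $\omega_0$. Hence for every genuine global orientation, $f_{m,\omega_0}=\pm g(m)\,f_{\omega_\alpha}$ on $\Ocal_m\otimes\Z_q$, where $g(m)$ is a $q$-adic unit because $q\mid m$.

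Second, your leading-term computation, done correctly, gives $u_0=1$ for $\omega_\alpha$, not a power of $m'=m/q$. The Vandermonde of the conjugates of $\beta$ must be divided by the Vandermonde of the basis $(1,\alpha,\dots,\alpha^{n-1})$, which is exactly $\alpha^N\prod_{i<j}(\zeta^j-\zeta^i)$. The quantity $\pm n^nm^{n-1}$ you invoke is the \emph{square} of that Vandermonde and cancels from the ratio, so $f_{\omega_\alpha}(\beta)\equiv c_1^N\pmod q$.

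More strongly, Theorem~\ref{thm:localcoset} shows that for any $q\nmid N$ the unit values on local generators form exactly one coset $f_\omega(\alpha)\,(\Z_q^\times)^N$. So neither higher-order $q$-adic terms nor a different family of primes can make the local condition depend on $m'\bmod q$. With your orientation this coset is $(\Z_q^\times)^N\ni\pm1$ whenever $q\equiv1\pmod{2N}$. Your Step~2 therefore produces no obstruction at all, i.e.\ $\delta=0$, and the sieve has nothing to sieve.

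The missing idea is that the obstruction at an Eisenstein prime $q\mid m$ is carried entirely by the global index. With a genuine orientation, $f_{m,\omega}(\alpha)=\pm g(m)$ (Lemma~\ref{lem:index-equals-indexform}), so for $q\equiv1\pmod{2N}$ fixed-sign solvability at $q$ holds iff $g(m)\in(\F_q^\times)^N$. Since $g(m)^2\mid n^n$, the index takes only finitely many values. One fixes $g\ge2$ and needs a prime set $P_g$ with $\sum_{q\in P_g}1/q=\infty$ on which the \emph{fixed} integer $g$ is not an $N$th power residue.

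This is a Chebotarev condition in $\Q(\zeta_{2N},g^{1/N})/\Q(\zeta_{2N})$, and it is nonvacuous only if that Kummer extension is nontrivial, which must be proved. The paper does this as follows:
\begin{itemize}
\item for $n\ge5$, it uses $v_p(g)\le\frac n2 v_p(n)<\frac N2$ to force $[\Q(g^{1/N}):\Q]>2$, so this real field cannot lie in the abelian field $\Q(\zeta_{2N})$;
\item for $n=4$, it checks $g\in\{2,4,8,16\}$ directly.
\end{itemize}
If $g$ were an $N$th power, the obstruction would vanish identically, as in Remark~\ref{rem:kummer-trivial}.

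Your proposal has neither the identification of the coset representative with $\pm g(m)$ nor this Kummer-nontriviality input. Once both are in place, Step~3 is simpler than you plan. Every bad $m$ with $g(m)=g$ is $P_g$-free, and Lemma~\ref{lem:avoid-divergent-P} gives density $0$ without Erd\H{o}s--Kac or residue conditions modulo $q^2$.
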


A conceptual consequence is a density-level identification of \emph{monogenicity} and
\emph{$\alpha$--monogenicity} in all degrees $n\ge4$.

\begin{theorem}[Theorem~\ref{thm:density-equality}]
\label{thm:intro-density-equality}
Fix $n\ge4$ and let $m$ range over square-free integers with $X^n-m$ irreducible.
Then the set of monogenic fields $K_m$ differs from the set of $\alpha$--monogenic fields $K_m$ by a density-zero subset of parameters. Equivalently, monogenicity and $\alpha$--monogenicity have the same two-sided natural density in the pure family.
\end{theorem}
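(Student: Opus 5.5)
The plan is to deduce Theorem~\ref{thm:intro-density-equality} directly from Theorem~\ref{thm:intro-density-zero}, via elementary inclusions between the relevant parameter sets. Let $\mathcal{S}$ denote the set of admissible square-free $m$, and set
\[
\mathcal{A}=\{m\in\mathcal{S}: g(m)=1\},\qquad \mathcal{M}=\{m\in\mathcal{S}: K_m \text{ monogenic}\}.
\]
First I would note $\mathcal{A}\subseteq\mathcal{M}$: if $g(m)=1$ then $\Ocal_m=\Z[\alpha]$, so $\theta=\alpha$ is a power-integral generator. Hence $\mathcal{M}\setminus\mathcal{A}=\{m\in\mathcal{M}: g(m)\ge2\}$, and it suffices to show this difference has natural density $0$.

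The key step is the implication ``monogenic $\Rightarrow$ no ABS fixed-sign local obstruction''. If $\Ocal_m=\Z[\theta]$, then $f_{K_m,\omega}(\theta)=\varepsilon$ for some fixed $\varepsilon\in\{\pm1\}$ and a global $\theta\in\Ocal_m$. The same $\theta$, viewed in $\Ocal_m\otimes\Z_p$ for every $p$ and in $\Ocal_m\otimes\R$, solves $f_{K_m,\omega}=\varepsilon$ with a single sign at all places simultaneously. By Definition~\ref{def:ABS}, this is exactly the absence of an ABS obstruction. (If the definition is phrased via the determinant section on the ABHS moduli space of \S\ref{sec:ABHS}, the global point still restricts to compatible local points with a common sign.) Therefore
\[
\mathcal{M}\setminus\mathcal{A}\subseteq\{m\in\mathcal{S}: g(m)\ge2 \text{ and } K_m \text{ has no ABS obstruction}\},
\]
and the right-hand set has two-sided natural density $0$ by Theorem~\ref{thm:intro-density-zero}. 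Subsets of density-zero sets have (upper, hence actual) density zero, so $\mathcal{M}$ and $\mathcal{A}$ differ by a density-zero set. This gives the first assertion.

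For the ``equivalently'' clause, I would normalize densities within $\mathcal{S}$ in both directions $|m|\le X$. Writing $\#\mathcal{M}(X)=\#\mathcal{A}(X)+\#(\mathcal{M}\setminus\mathcal{A})(X)$, the second term is $o(\#\mathcal{S}(X))$, so $\mathcal{M}$ has a natural density if and only if $\mathcal{A}$ does, and the two densities agree. The density of $\mathcal{A}$ should exist, since $g(m)=1$ is a congruence/local condition on square-free $m$ (Dedekind's criterion at primes $p\mid n$). I expect no genuine obstacle here beyond checking that the sign convention in Definition~\ref{def:ABS} matches the global sign; all of the difficulty is contained in Theorem~\ref{thm:intro-density-zero}.
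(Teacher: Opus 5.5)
Your proposal is correct and follows the paper's proof. As in the paper, you note that a monogenic but non-$\alpha$-monogenic $m$ has $g(m)\ge 2$. You then use that a global generator gives local solutions with a single fixed sign (Lemma~\ref{lem:monogenic-implies-no-local-obstruction}) to place $m$ in the density-zero set $S'$ of Theorem~\ref{thm:density-zero-ABS}; your extra bookkeeping for the ``equivalently'' clause is a harmless elaboration of what the paper leaves implicit, with the existence of the $\alpha$-monogenic density supplied by \cite{NDN25}.
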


In particular, combining the density equality with the explicit density of $\alpha$--monogenicity from the $\alpha$--criterion (see \cite{NDN25}) yields the explicit monogenic density:
\[
\delta\bigl(\{m:\ \Ocal_{K_m}\ \text{monogenic}\}\bigr)
=
\frac{6}{\pi^2}\prod_{p\mid n}\frac{p}{p+1}.
\]
This is conceptually surprising when viewed against the $n=2,3$ landscape: although the distinguished
generator is \emph{not} expected to capture all global monogenic generators in any fixed degree,
for $n\ge4$ it nevertheless captures them \emph{in density} inside the pure family. This sharp contrast suggests that the failure (or validity) of local-to-global principles for monogenicity is highly sensitive to the geometry of the family: generic families exhibit abundant global failures, whereas Eisenstein families exhibit a density-level local-to-global principle. We also prove quantitative refinements, including log-power savings at fixed index (see Proposition~\ref{prop:logpower-fixed-g}).

Moreover, we introduce a flexible framework of \emph{scaled Eisenstein} families in which the Eisenstein--prime obstruction sieve goes through \emph{verbatim}, once one has (a) uniform control of the relevant index targets and (b) nontriviality of the associated Kummer classes.

More precisely, fix a polynomial
\[
h(X)=c_{n-1}X^{n-1}+c_{n-2}X^{n-2}+\cdots+c_1X+c_0\in\Z[X],
\qquad c_0\neq 0.
\]
For each integer parameter $t\in\Z$, consider the monic polynomial
\[
f_t(X)\ :=\ X^n+t\,h(X)\ \in\ \Z[X],
\]
and let $\theta_t$ be a root.  Define
\[
K_t:=\Q(\theta_t),\qquad \Ocal_t:=\Ocal_{K_t},\qquad g(t):=[\Ocal_t:\Z[\theta_t]]\in\Z_{\ge 1}.
\]
We say that $K_t$ is \emph{$\theta_t$--monogenic} if $g(t)=1$. Consider the square-free parameter set
\[
\mathcal{T}_{h,\mathrm{sf}}
:=\Bigl\{\,t\in\Z:\ |t|>1,\ t\ \text{square-free, and }\gcd(t,c_0)=1\,\Bigr\}.
\]

We prove that, within the scaled Eisenstein family, the set of parameters that are \textsc{ABS}-unobstructed yet fail to be $\theta_t$--monogenic has density zero.

\begin{theorem}[Theorem~\ref{thm:scaled-density-zero-merged}]
Fix $n\ge 4$ and $h\in\Z[X]$ as above, and let $N=\frac{n(n-1)}2$.
Assume the following hypotheses on the square-free parameter set $\mathcal{T}_{h,\mathrm{sf}}$:
\begin{enumerate}[label=\textup{(\alph*)}]
\item \textup{(Finite index values)} There exists a finite set $G\subset\Z_{\ge1}$ such that
$g(t)\in G$ for all $t\in\mathcal{T}_{h,\mathrm{sf}}$.
\item \textup{(Kummer nontriviality)} For every $g\in G$ with $g\ge 2$,
\[
\Q(\zeta_{2N},g^{1/N})\neq \Q(\zeta_{2N}).
\]
\end{enumerate}
Let $\mathcal{S}'_{h}\subseteq\mathcal{T}_{h,\mathrm{sf}}$ be the set of $t$ such that:
\begin{enumerate}[label=\textup{(\roman*)}]
\item $g(t)\ge 2$, and
\item $K_t$ has \emph{no} local obstruction to monogenicity in the \textsc{ABS} fixed-sign sense.
\end{enumerate}
Then $\mathcal{S}'_{h}$ has two-sided natural density $0$ in $\Z$.
\end{theorem}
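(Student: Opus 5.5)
The plan is to attach to each bad parameter $t$ a single \emph{Eisenstein prime} $p$ that sees a local obstruction, and then sieve. First I would make the ABS condition explicit. Fix an orientation $\omega$ and write $f_{K_t,\omega}$ for the index form. The element $\theta_t$ satisfies $f(\theta_t)=\pm g(t)$. Every $\beta\in\Ocal_t$ lies in $g(t)^{-1}\Z[\theta_t]$, i.e.\ $g(t)\beta\in\Z[\theta_t]$, and the index form is homogeneous of degree $N$. Combining these, the fixed-sign equation $f_{K_t,\omega}(\beta)=\varepsilon$ over $\Z_p$ can be rewritten as an equation in the $\Z[\theta_t]$-coordinates of $x=g(t)\beta$: the index form of the monogenic order $\Z_p[\theta_t]$ must equal $\varepsilon\, g(t)^{N}$, up to the unit $\pm g(t)$ coming from the change of orientation. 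The cleanest route is the ABHS description recalled in \S\ref{sec:ABHS}, where the obstruction is a torsor class under $\mu_N$ twisted by $g(t)$.

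The key local input is at primes $p\mid t$. There $f_t$ is Eisenstein, since $p\mid t$, $p^2\nmid t$ and $p\nmid c_0$. So $\Ocal_t\otimes\Z_p=\Z_p[\theta_t]$ is totally ramified with uniformizer $\theta_t$. I would compute the index form of $\Z_p[\theta_t]$ modulo $p$. Because the residue extension is trivial and $\theta_t$ is a uniformizer, every $\beta\in\Ocal_t\otimes\Z_p$ has the form $\beta=a+u\theta_t$ with $u$ a unit or divisible by $\theta_t$. The index form then reduces to $u$-norm data, giving $f(\beta)\equiv N_{K_{t,p}/\Q_p}(u)^{(n-1)}\cdot(\text{unit})\cdot \mathrm{lead}$. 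Working mod $p$, its values on the locus of unit index lie in $\pm g(t)^{\pm1}\cdot(\F_p^\times)^{N}$-type cosets; more precisely, the values of $f_{K_t,\omega}$ on $\Z_p$-points form a fixed coset of $N$-th powers times the orientation constant. Hence a fixed sign $\varepsilon$ is locally representable at $p$ only if $\varepsilon g(t)$ (or $g(t)$ relative to $\varepsilon$) is an $N$-th power mod $p$ in the appropriate sense. This is a congruence condition on $p$ alone, depending only on $g=g(t)$ and $\varepsilon$. I expect this explicit index-form computation at an Eisenstein prime, with careful bookkeeping of the orientation constant, to be the main obstacle.

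Next, for fixed $g\in G$ with $g\ge2$, consider the set $P_g$ of primes $p\equiv1\pmod{2N}$ for which neither $g$ nor $-g$ is an $N$-th power mod $p$. These are primes that split in $\Q(\zeta_{2N})$ but for which Frobenius acts nontrivially on $\Q(\zeta_{2N},(\pm g)^{1/N})$. By hypothesis (b) and Kummer theory, these extensions are nontrivial. For $-g$ this follows since $-1=\zeta_{2N}^N$ is already an $N$-th power in $\Q(\zeta_{2N})$. Chebotarev then gives $P_g$ positive Dirichlet (indeed natural) density $\delta_g>0$. By the previous step, any $t\in\mathcal{T}_{h,\mathrm{sf}}$ with $g(t)=g$ that is divisible by some $p\in P_g$ is ABS-obstructed. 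Therefore $\mathcal{S}'_h\cap\{g(t)=g\}$ is contained in the set of $t$ with no prime factor in $P_g$.

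Finally, a standard sieve bound (Brun, or Hall--Tenenbaum / Wirsing) shows that
\[
\#\{|t|\le X:\ p\nmid t\ \forall p\in P_g\}\ll X\prod_{p\le X,\,p\in P_g}\Bigl(1-\tfrac1p\Bigr)\ll X(\log X)^{-\delta_g},
\]
which is $o(X)$. Taking the union over the finitely many $g\in G$ guaranteed by (a) proves that $\mathcal{S}'_h$ has density $0$, with a log-power saving.
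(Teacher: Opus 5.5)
Your overall architecture matches the paper's. At a prime $p\mid t$ the polynomial $f_t$ is Eisenstein, so $\Ocal_t\otimes\Z_p=\Z_p[\theta_t]$ and $f_\omega(\theta_t)=\pm g(t)\in\Z_p^\times$. Local coset rigidity then turns fixed-sign solvability at $p$ into an $N$-th power residue condition on $g(t)$. Your $P_g$ agrees with the paper's up to finitely many primes, because for $p\equiv 1\pmod{2N}$ the elements $g$ and $-g$ are simultaneously $N$-th powers or not. Chebotarev plus a sieve on $P_g$-free integers, followed by the finite union over $G$, closes the argument. The paper only needs $\sum_{q\in P_g}1/q=\infty$ and the elementary product bound; your log-power saving is a bonus, cf.\ Proposition~\ref{prop:logpower-fixed-g}. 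The rescaling by $g(t)$ in your first paragraph is unnecessary: at $p\mid t$ one has $p\nmid g(t)$, so no denominators appear.

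What is missing is an actual proof of the local step. It carries all the content, and you leave it as the ``main obstacle'' with a sketch that does not work as written. Your formula $f(\beta)\equiv N(u)^{n-1}\cdot(\text{unit})\cdot\mathrm{lead}$ is really the discriminant congruence $\disc(u\theta_t)\equiv N(u)^{n-1}\disc(\theta_t)\pmod p$. It therefore controls $f(\beta)^2$, not $f(\beta)$. Read literally for $f$ it is false: for $u=u_0\in\Z_p^\times$, homogeneity gives $f(u_0\theta_t)=u_0^{N}f(\theta_t)$, whereas $N(u_0)^{n-1}=u_0^{2N}$. Squaring would only yield $f(\beta)\in\pm f(\theta_t)(\F_p^\times)^N$. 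That is enough here, since $-1\in(\F_p^\times)^N$, but you would still have to prove the discriminant congruence using tameness.

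The paper's argument, Theorem~\ref{thm:localcoset}, is direct:
\begin{enumerate}
\item A $\Z_p$-solution of $f_\omega(x)=\varepsilon$ is automatically a $\Z_p$-generator, because its top wedge is a unit multiple of $\omega$.
\item After translating, $\beta_0=\beta-a=u\theta_t$ must have $u\in A^\times$. Otherwise $\beta_0\in\mathfrak m^2$, and $\Z_p[\beta_0]\subseteq\Z_p+\mathfrak m^2$ contains no uniformizer.
\item In $A/pA=\F_p[\bar\theta_t]/(\bar\theta_t^{\,n})$ one has $(u\theta_t)^j\equiv u_0^j\theta_t^j\pmod{\theta_t^{j+1}}$. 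Hence the change-of-basis matrix from $(1,\theta_t,\dots,\theta_t^{n-1})$ to $(1,\beta_0,\dots,\beta_0^{n-1})$ is triangular mod $p$ with diagonal $1,u_0,\dots,u_0^{n-1}$, so its determinant is $u_0^{N}$.
\item Hensel's lemma, using $p\nmid N$, lifts this to $f_\omega(\beta)\in f_\omega(\theta_t)(\Z_p^\times)^N$.
\end{enumerate}
With this step supplied, your proof goes through.
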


We exhibit explicit thin-parameter subfamilies showing that: (i) the Kummer nontriviality hypothesis is not a technical artifact but a genuinely necessary input for the sieve (Theorem~\ref{thm:thin-counterexample}); and (ii) even in one-parameter families, the \textsc{ABS} fixed--sign local condition does \emph{not} in general force the \emph{distinguished} generator to have index~$1$ (Theorem~\ref{thm:fixed-index-twist}), in line with the broader phenomenon that local conditions may fail to govern global monogenicity.

\subsection{Method: the Eisenstein-prime obstruction sieve}
The global argument is not a case-by-case integral-basis computation; rather, it is a general sieve strategy that turns local coset rigidity into density bounds by producing many primes $q$ at which $x^n-m$ is Eisenstein and then showing that a single global generator cannot satisfy all local coset constraints simultaneously except on a density-zero set.

The mechanism is deliberately modular and can be summarized as a three-step pipeline:

\smallskip\noindent
\textbf{(1) Local rigidity: a single-coset value set on local generators.}
Let $R$ be a complete DVR with residue characteristic $p\nmid N=\dfrac{n(n-1)}{2}$, and let $A/R$ be a rank-$n$ DVR of residue degree $1$ generated by a uniformizer.
Theorem~\ref{thm:localcoset} shows that for any orientation $\omega$ of $A$
the unit values of the local index form $f_\omega$ on \emph{all} $R$-generators of $A$ form exactly one coset:
\[
\{\,f_\omega(\beta)\in R^\times:\ A=R[\beta]\,\}
=
f_\omega(\pi)\cdot (R^\times)^N.
\]
Equivalently, the induced map to $H^1(\Spec R,\mu_N)\simeq R^\times/(R^\times)^N$ is constant and defines
a canonical \emph{local obstruction class} $\kappa(A/R,\omega)$.

\smallskip\noindent
\textbf{(2) One-prime obstruction certificates at moving Eisenstein primes.}
In the pure family, every prime $q\mid m$ is Eisenstein for $X^n-m$, so
$\Ocal_m\otimes\Z_q$ is precisely of the above type (a residue-degree $1$ DVR generated by $\alpha$). A top-wedge computation identifies $f_{m,\omega}(\alpha)=\pm g(m)$ (Lemma~\ref{lem:index-equals-indexform}),
and moreover $q\nmid g(m)$ for $q\mid m$ (Lemma~\ref{lem:eisenstein-prime-does-not-divide-g}),
so $g(m)$ is a unit in $\Z_q$.
Thus Theorem~\ref{thm:localcoset} converts fixed-sign local solvability at $q$ into the Kummer constraint
\[
\pm 1\in \pm g(m)\cdot(\Z_q^\times)^N.
\]
For primes $q\equiv 1\pmod{2N}$ the sign is itself an $N$th power, so solvability forces the residue-field condition
\[
\overline{g(m)}\in(\F_q^\times)^N.
\]
Then Kummer theory translates \(\,N\)-th power conditions into Frobenius conditions. More precisely, fix \(n\ge 4\) and \(N=\frac{n(n-1)}2\), and for a fixed index value \(g\ge 2\) put
\(K=\Q(\zeta_{2N})\) and \(L=K(g^{1/N})\).
For primes \(q\nmid 2Ng\) with \(q\equiv 1\pmod{2N}\) (so that \(q\) splits completely in \(K\)),
Kummer theory identifies the residue condition \(g\in (\F_q^\times)^N\) with a splitting condition in the
Kummer extension \(L/K\):
\[
q \text{ splits completely in }L
\quad\Longleftrightarrow\quad
g\in (\F_q^\times)^N.
\]
Equivalently, the condition \(g\) is an \(N\)-th power modulo \(q\) is encoded by the Artin/Frobenius element in \(\Gal(L/K)\) (or, in class field theoretic language, by the power residue symbol in a Kummer extension); see, for example, Milne's treatment of the Artin map and Kummer extensions \cite[\S IV.4--IV.5]{MilneCFT}. In other words, \emph{a single Eisenstein prime divisor of $m$} can certify the existence of an ABS fixed-sign local obstruction by a concrete $N$th-power residue test on $g(m)$.

\smallskip\noindent
\textbf{(3) Prime production and sieve closure.}
Assume \(L\neq K\).  Then Chebotarev implies that the set of primes that split completely in \(K\) but not in \(L\)
has positive density, and (away from the finitely many primes dividing \(2Ng\)) this is exactly the set
\[
P_g=\Bigl\{\,q:\ q\nmid 2Ng,\ q\equiv 1\!\!\pmod{2N},\ g\notin(\F_q^\times)^N\,\Bigr\}.
\]
Thus \(P_g\) is (up to finitely many primes) a Chebotarev set in \(L/\Q\), hence a frobenian set in the sense of
Serre \cite{SerreChebotarev}.  A classical sieve argument then bounds the set of integers whose prime factors avoid \(P_g\); see, e.g., \cite[Ch.~5]{HR74} or \cite[\S6.4]{IK04}.
This is the closure step used repeatedly in the paper: once \(P_g\) is produced with positive density, most parameters in a thick set (such as squarefree integers in a progression) necessarily have a
prime divisor in \(P_g\), which then forces a fixed-sign local obstruction at that prime via the one-prime
certificate discussed above. Since $\sum_{q\in P_g}1/q=\infty$, avoiding $P_g$ forces density zero, with quantitative log-power bounds
available via a Chebotarev--Mertens estimate and an upper-bound sieve. A sieve upper bound then shows that
parameters avoiding this Frobenian set are sparse (see \cite{HR74,AKK22}).

\subsection*{Remarks on the method}
We refer to this three-step pipeline as the \emph{Eisenstein-prime obstruction sieve}: the obstructing primes are rational primes \(q\) at which the specialization polynomial is Eisenstein
(typically \(q\mid\) the parameter), and the sieve is the classical sifting step excluding integers divisible by primes in the obstructing set \(P_g\). We emphasize that Eisenstein prime here means \emph{Eisenstein for the specialization polynomial} (not a prime in the ring of Eisenstein integers \(\Z[\omega]\)). 

This paradigm, Chebotarev sets (or frobenian sets) combined with a sieve argument, is classical and is treated systematically by Serre in his study of frobenian sets, where he explicitly combines Chebotarev-style arguments with Selberg-type sieve techniques \cite{SerreChebotarev}.  The new point in the present paper is that the \emph{local} obstruction is extracted in a particularly explicit form from primes dividing the parameter, via Eisenstein portability and our local coset constraint.

From the perspective of arithmetic statistics, our closure step is a one-dimensional analogue of the general principle of imposing finitely many local conditions via sieve methods.  In higher-dimensional parameter spaces, this is often carried out via the \emph{Ekedahl sieve} (originating in Ekedahl's work \cite{Ekedahl1991})
and its refinements (e.g.\ Poonen \cite{Poonen2003Squarefree}, and Bhargava's geometric sieve
framework \cite{Bhargava2014GeometricSieve}; see also the applications to squarefree discriminants and maximality problems in Bhargava--Shankar--Wang \cite{BSW-I,BSW-II}). In contrast, our distinctive contribution is the explicit \emph{one-prime} extraction of a fixed-sign local obstruction at Eisenstein prime divisors, which interfaces cleanly with a classical Chebotarev sieve on the parameter.

Our sieve also extends beyond the specific target $\pm1 $. The reason is conceptual: the local rigidity input concerns unit cosets modulo $N$-th powers, rather than the sign classes ${\pm1} $ per se. In particular, one can study, for example, the density of parameters for which there is local solvability of equations of the form
  \[
  f=\varepsilon\cdot u,
  \]
with $u$ ranging over prescribed unit classes; and in rigid families, the scarcity of parameters that are globally solvable yet locally everywhere solvable at a fixed index target.

A notable feature of the obstruction sieve is that the family-specific inputs are minimal: one needs (i) a supply of \emph{variable Eisenstein primes} indexed by the parameter, and
(ii) control of the set of possible indices $g(\cdot)$ on the parameter set.
This allows the method to transfer to other Eisenstein parameter families; see
\S\ref{sec:examples-counterexamples}, where we develop a general scaled Eisenstein framework and prove density-zero results under finite index-value and Kummer-nontriviality hypotheses (e.g.\ Theorem~\ref{thm:scaled-density-zero-merged}).

At the same time, the paper records constructions illustrating the necessity of the hypotheses: in thin parameter sets (e.g.\ primes) avoidance of a positive-density prime set may not be rare,
and if $g$ is an $N$th power in $\Q^\times$ then the relevant Kummer extension becomes trivial and the
Chebotarev obstruction set $P_g$ collapses (Remark~\ref{rem:kummer-trivial}). We also include a fixed-index twist construction (Theorem~\ref{thm:fixed-index-twist}) showing that,
in general one-parameter families, ABS-unobstructedness does not force a \emph{distinguished} generator to have index $1$, even on positive-density parameter sets.

\subsection{Related works and future directions}

Fix a finite set of places $S$. One may call an order \emph{$S$-monogenic} if
$\mathcal{O}_K[1/S]=\mathbb{Z}[1/S][\theta]$, i.e.\ monogenicity after allowing denominators
supported on $S$. In the generator-scheme formalism \cite{ABHS-I,ABHS-II}, this is naturally
encoded by replacing an affine generator space by a projective (or partially compactified) one:
informally, generators are allowed to acquire poles along $S$.
The obstruction sieve should extend verbatim after \emph{excluding} primes in $S$ from the portable set, leading to a density statement "away from $S$".

A sharper variant is to define \emph{projective monogenicity} up to scaling: ask whether $\mathcal{O}_K$ is generated by some $\theta$ modulo the $\mathbb{G}_m$--action (or more generally an affine group action), turning the obstruction class into a twist-class in
the relevant cohomology group. It is closely related to the notion of rational monogenicity \cite{Eve23, EG24}.
A worthwhile goal is to build the corresponding projective generator space and identify the correct
line bundle whose local value set admits a one-coset description.

Let $K/F$ be a finite extension of number fields. Relative monogenicity asks whether
$\mathcal{O}_K=\mathcal{O}_F[\theta]$. Many explicit computations especially in Kummer-type extensions, for instance \cite{SmithRadical2021}, indicate that residue-degree-one,
totally ramified primes of $F$ behave as the right portable primes, and the relevant obstruction
classes land in $\mathcal{O}_{F,\mathfrak{q}}^\times/(\mathcal{O}_{F,\mathfrak{q}}^\times)^N$.
This suggests a clean generalization of the sieve from $\mathbb{Q}$ to an arbitrary base field $F$.

Although different from power integral bases, the problem of freeness of $\mathcal{O}_L$ as a
$\mathbb{Z}[G]$--module exhibits a parallel structure: global freeness, local conditions, and
cohomological obstructions. For context on restricted Hilbert--Speiser/Leopoldt properties and
related obstructions, see \cite{byott2011hilbertspeiser} and the Hilbert--Speiser setting developed
around Swan modules \cite{greither1999swan}.
In parameterized Galois families with moving ramification, it is natural to ask whether one can
identify a \emph{portable local obstruction class} that produces Frobenian obstructing primes,
and then run the same Chebotarev--sieve closure.

\subsection*{Organization of the paper}

Section~\ref{sec:setting} recalls index forms and the ABHS generator scheme viewpoint.
Section~\ref{sec:kummer} develops the local obstruction class and proves local Kummer rigidity.
Section~\ref{sec:chebotarev-kummer} produces obstructing primes via Kummer theory and Chebotarev.
Sections~\ref{sec:density-pure-family} close the sieve and deduce the density-zero
and density-equality results in the pure family, including the explicit density formula.
Section~\ref{sec:quantitative-sieve} proves quantitative log-power bounds.
Finally, Section~\ref{sec:examples-counterexamples} develops transfer principles illustrating the scope of the method.

\subsection*{Acknowledgements}
We thank Leo Herr for useful communications. We thank Morningside Center of Mathematics, Chinese Academy of Sciences, for its support and a stimulating research environment.
% ============================================================
\section{Setting}\label{sec:setting}
% ============================================================

\subsection{Monogenicity and index forms}
Let $K/\Q$ be a number field of degree $n\ge 2$ with ring of integers $\OO_K$.
The ring $\OO_K$ is \emph{monogenic} if $\OO_K=\Z[\theta]$ for some $\theta\in\OO_K$. Equivalently, $\OO_K$ admits a power integral basis $\{1,\theta,\dots,\theta^{n-1}\}$.

Fix an \emph{orientation} $\omega\in \wedgeTop_\Z \OO_K$, i.e.\ a generator of the rank-one
$\Z$-module $\wedgeTop_\Z \OO_K$. Define the \emph{index form}
\begin{equation}\label{eq:index-form-classical}
f_{K,\omega}:\OO_K\longrightarrow \Z,\qquad
1\wedge \beta\wedge \beta^2\wedge \cdots \wedge \beta^{n-1} = f_{K,\omega}(\beta)\,\omega.
\end{equation}
Then $\OO_K$ is monogenic if and only if $f_{K,\omega}$ represents $\pm 1$ over $\Z$. 

Alp\"oge--Bhargava--Shnidman introduced a strengthened notion of \emph{no local obstruction} in which the sign on the right-hand side is fixed globally.

\begin{definition}\label{def:ABS}
Let $K/\Q$ be a number field of degree $n\ge 2$ with ring of integers $\OO_K$.
We say that $K$ is \emph{fixed-sign locally solvable for monogenicity} if there exist an orientation
$\omega$ of $\OO_K$ and $\varepsilon\in\{\pm1\}$ such that for every rational prime $\ell$ the equation
\[
f_{K,\omega}(x)=\varepsilon
\]
has a solution in $\OO_K\otimes_{\Z}\Z_\ell$.
\end{definition}

% ============================================================
\subsection{ABHS monogenerators and the index-form ideal}
\label{sec:ABHS}
% ============================================================

We recall the ABHS framework: monogenerators form a representable moduli scheme and the non-generator
locus is cut out by a canonically defined ideal (the index-form ideal) \cite{ABHS-I,ABHS-II}.

Let $\pi:S'\to S$ be finite locally free of constant degree $n\ge 1$ with $S$ locally noetherian.
A global section $\theta\in \Gamma(S',\OO_{S'})$ induces an $S$-morphism $S'\to \A^1_S$,
and $\theta$ is a \emph{monogenerator} iff this map is a closed immersion, equivalently iff
$\OO_{S'}=\OO_S[\theta]$ Zariski-locally on $S$.

\begin{theorem}[ABHS representability]\label{thm:ABHS-rep}
Let $\pi:S'\to S$ be finite locally free of constant degree $n$ with $S$ locally noetherian.
Let $X\to S$ be quasiprojective.
Then there exists a smooth, quasi-affine $S$-scheme $\cM_{X,S'/S}$ representing the functor on
$S$-schemes $T\to S$ that assigns the set of sections $S'\times_S T \to X\times_S T$
that are closed immersions over $T$.
If $X=\A^1_S$, then $\cM_{X,S'/S}$ is affine; write $\cM_{S'/S}:=\cM_{\A^1_S,S'/S}$.
\end{theorem}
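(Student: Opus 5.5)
The plan is to build $\cM_{X,S'/S}$ as an open subscheme of the Weil restriction $\Res_{S'/S}(X\times_S S')$, and to treat the affine line explicitly. Since $\pi$ is finite locally free and $X\to S$ is quasiprojective, the functor $T\mapsto \mathrm{Hom}_T(S'_T,X_T)$ is representable by a scheme $R:=\Res_{S'/S}(X_{S'})$. Representability uses the standard fact that any finite set of points in a fibre of a quasiprojective morphism lies in an affine open. This lets one glue affine pieces, which are cut out by polynomial equations in the coordinates of the locally free sheaf $\pi_*\OO_{S'}$.

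The closed-immersion condition on $\sigma:S'_T\to X_T$ is open on $T$. A finite morphism is a closed immersion iff $\OO_{X_T}\to\sigma_*\OO_{S'_T}$ is surjective. The cokernel is a coherent sheaf with finite support over $T$, and by Nakayama its support projects to a closed subset of $T$. Hence the universal map over $R$ is a closed immersion exactly over an open subscheme, and that subscheme is $\cM_{X,S'/S}$.

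For $X=\A^1_S$ the construction is completely explicit. Here $R=\mathbf{V}(\pi_*\OO_{S'})$ is the affine bundle of global sections $\theta$. The section $1\wedge\theta\wedge\cdots\wedge\theta^{n-1}$ of the line bundle $\wedgeTop\pi_*\OO_{S'}$, pulled back to $R$, is a determinant section. By Nakayama, $\theta$ is a monogenerator iff $1,\theta,\dots,\theta^{n-1}$ span, which holds iff this section is invertible. So $\cM_{S'/S}$ is the nonvanishing locus of a section of a line bundle on an affine $S$-scheme, and is therefore affine over $S$. Locally, after trivializing by an orientation $\omega$, it is $\Spec$ of the coordinate ring localized at the index form $f_\omega$. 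It is smooth because it is open in a vector bundle. Quasi-affineness for general $X$ follows by covering $X$ with affines $U\subset\A^r_S$ and embedding the corresponding open pieces into $\cM_{\A^r_S,S'/S}$, which is open in the affine bundle $\mathbf{V}(\pi_*\OO_{S'})^{\oplus r}$.

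The main obstacle is smoothness for an arbitrary quasiprojective $X$, since the Weil restriction of a non-smooth $X$ need not be smooth. I would check the infinitesimal lifting criterion directly, using the closed-immersion hypothesis. Take a square-zero thickening $T_0\subset T$ of affine schemes and a closed immersion $S'_{T_0}\hookrightarrow X_{T_0}$ landing in an affine open $U=\Spec B$. The image is then a finite closed subscheme, cut out by a surjection $B\to A_0$, and we must lift it to a surjection $B\to A$ with $A=\Gamma(S'_T,\OO)$. The first attempt is to factor through a smooth ambient space: embed $U$ in $\A^r$, lift in the smooth scheme $\cM_{\A^r}$, and then argue that the lifted subscheme can be chosen inside $U$. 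If that fails in general, I would defer to \cite{ABHS-I} for the general-$X$ smoothness and retain only the $X=\A^1_S$ case, which is the case used in the rest of the paper.
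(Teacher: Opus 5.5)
The paper gives no proof of this statement; it quotes it from \cite{ABHS-I,ABHS-II}, and only the case $X=\A^1_S$ is used later, through Theorem~\ref{thm:ABHS-index-open}. Your treatment of that case is correct and matches that description. By Cayley--Hamilton, $\OO_T[\theta]$ is spanned by $1,\theta,\dots,\theta^{n-1}$, so $\theta$ is a monogenerator iff the determinant section $1\wedge\theta\wedge\cdots\wedge\theta^{n-1}$ of the pulled-back line bundle $\wedgeTop\pi_*\OO_{S'}$ is invertible. Its non-vanishing locus in $\mathbf{V}(\pi_*\OO_{S'})$ is therefore affine and smooth over $S$. Your construction of $\cM_{X,S'/S}$ as the open closed-immersion locus in $\Res_{S'/S}(X_{S'})$ is also sound. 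One correction there: the image in $T$ of the cokernel's support is closed because that support is finite over $T$; Nakayama and right exactness only serve to identify its complement with the fibrewise closed-immersion condition.

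The genuine gap is the general-$X$ part, and it cannot be closed: for arbitrary quasiprojective $X$, the conclusions \emph{smooth} and \emph{quasi-affine} are false.
\begin{itemize}
\item Take $n=1$ and $S'=S$. A $T$-morphism $T\to X_T$ is a section of the separated morphism $X_T\to T$, hence automatically a closed immersion, so $\cM_{X,S/S}=X$. This is not smooth for $X=\Spec k[x,y]/(xy)$ and not quasi-affine for $X=\mathbb{P}^1_k$.
\item For $n\ge 2$, the split cover $S'=\coprod_n S$ gives the ordered configuration space $X^n\setminus(\text{big diagonal})$. It is singular when $X$ is a nodal curve. For $X=\mathbb{P}^2_k$ and $n=2$ it contains the complete curve $\ell\times\{p\}$ with $p\notin\ell$, so it is not quasi-affine.
\end{itemize}
Concretely, both of your general-$X$ steps break:
\begin{itemize}
\item Your quasi-affineness step only shows that $\cM_{X,S'/S}$ is covered by quasi-affine opens. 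Quasi-affineness is not local on the source; $\mathbb{P}^1$ is covered by two affine lines.
\item Your lifting plan for smoothness must fail. For $n=1$ it is literally the infinitesimal lifting problem for $U$ itself, and the closed-immersion condition places no constraint on it.
\end{itemize}
Deferring these claims to \cite{ABHS-I} is therefore not an option either; the statement as transcribed here is missing hypotheses on $X$.

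Your framework does prove the correct version once those hypotheses are added:
\begin{itemize}
\item If $X\to S$ is smooth, then $\cM_{X,S'/S}$ is smooth. Lift $S'_{T_0}\to X$ across the square-zero thickening $S'_{T_0}\subset S'_T$ of affine schemes using formal smoothness. The lift is still a closed immersion, because the closed-immersion locus in $T$ is open and contains every point: $T_0$ and $T$ have the same points and the same fibres.
\item If $X\to S$ is quasi-affine (e.g.\ affine), then $\cM_{X,S'/S}$ is quasi-affine. Weil restriction along $\pi$ sends an open immersion $X\hookrightarrow\bar X$ into an $S$-affine scheme to an open immersion into the affine $S$-scheme $\Res_{S'/S}(\bar X_{S'})$.
\end{itemize}
State the result with these hypotheses, and keep your $\A^1_S$ argument, which is all the paper actually needs.
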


Let $R_{S'/S}:=\Res_{S'/S}(\A^1_{S'})$ be the Weil restriction.
When $S=\Spec A$ and $S'=\Spec B$ with $B$ finite free of rank $n$ over $A$, one has
$R_{S'/S}\cong \A^n_S$ after choosing an $A$-basis of $B$.

Fix an $A$-basis $e_1,\dots,e_n$ of $B$ and let $x_1,\dots,x_n$ be the coordinates on $\A^n_S$.
Define the \emph{universal element}
\[
\Theta \;:=\; x_1 e_1 + \cdots + x_n e_n \;\in\; B\otimes_A A[x_1,\dots,x_n].
\]
For $1\le i,j\le n$ define $a_{ij}\in A[x_1,\dots,x_n]$ by
\[
\Theta^{i-1} \;=\; a_{i1}e_1 + \cdots + a_{in}e_n.
\]
Let $M(e_1,\dots,e_n):=(a_{ij})$ and define the \emph{local index form}
\[
i(e_1,\dots,e_n) \;:=\; \det(M(e_1,\dots,e_n)) \in A[x_1,\dots,x_n].
\]

\begin{theorem}[ABHS: the index form cuts out monogenerators]\label{thm:ABHS-index-open}
With notation as above, $\cM_{S'/S}$ identifies with the distinguished open
\[
\cM_{S'/S} \;\cong\; \Spec A[x_1,\dots,x_n, i(e_1,\dots,e_n)^{-1} ] \;\subseteq\; \Spec A[x_1,\dots,x_n].
\]
Moreover, under change of basis the polynomial $i(e_1,\dots,e_n)$ rescales by a unit, so the ideal it
generates glues to a global ideal sheaf $I_{S'/S}$ on $R_{S'/S}$.
Its vanishing defines a closed subscheme $\cN_{S'/S}\subseteq R_{S'/S}$, the \emph{scheme of non-monogenerators}.
\end{theorem}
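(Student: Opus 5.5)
The plan is to prove the statement Zariski-locally on $S$, where $B$ is free, and then glue. Since $\cM_{S'/S}$ is a sheaf (Theorem~\ref{thm:ABHS-rep}) and $R_{S'/S}$ is Zariski-local on $S$, we may assume $S=\Spec A$ and $S'=\Spec B$ with $B$ free on $e_1,\dots,e_n$. A $T$-point of $R_{S'/S}\cong\A^n_S$ with $T=\Spec C$ affine is an element $\theta=\sum c_ie_i\in B_C:=B\otimes_AC$. It is the specialization of $\Theta$ under $x_i\mapsto c_i$. Since $S'\times_ST\to\A^1_T$ is a morphism of affine schemes, it is a closed immersion iff the $C$-algebra map $C[X]\to B_C$, $X\mapsto\theta$, is surjective, i.e.\ iff $B_C=C[\theta]$.

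\textbf{Key step.} I will show that $B_C=C[\theta]$ if and only if the specialized determinant $i(e_1,\dots,e_n)(c)$ is a unit in $C$.

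First, $C[\theta]$ equals the $C$-span of $1,\theta,\dots,\theta^{n-1}$. Indeed, multiplication by $\theta$ on the free $C$-module $B_C$ satisfies its characteristic polynomial by Cayley--Hamilton. Applying this to $1\in B_C$ expresses $\theta^n$ as a $C$-linear combination of lower powers.

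Hence $B_C=C[\theta]$ iff the $n$ elements $\theta^0,\dots,\theta^{n-1}$ span the free rank-$n$ module $B_C$. A surjective endomorphism of a finitely generated module is an isomorphism (Vasconcelos / Nakayama), so this holds iff they form a $C$-basis. That in turn holds iff the matrix expressing them in the basis $e_j$, namely $M(e_1,\dots,e_n)$ specialized at $c$, is invertible, i.e.\ iff its determinant is in $C^\times$.

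\textbf{Representability.} The key step says the subfunctor $\cM_{S'/S}\subseteq\A^n_S$ consists exactly of those points at which $i$ becomes invertible. This subfunctor is represented by the distinguished open $\Spec A[x_1,\dots,x_n,i^{-1}]$. For non-affine $T$, the condition of being a closed immersion is Zariski-local on $T$, so the affine case suffices.

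\textbf{Change of basis.} Let $e'=Pe$ with $P\in\GL_n(A)$. The universal element $\Theta$ is intrinsic; only its coordinates change, by the linear substitution $x\mapsto (P^{-1})^{T}x$. The rows of $M$ are the coordinates of $\Theta^{i-1}$, so the new matrix is $M'=M P^{-1}$ after this substitution. Therefore $i(e')=\det(P)^{-1}\, i(e)$, where $\det(P)^{-1}\in A^\times$. So the ideal $(i)\subseteq\OO_{R_{S'/S}}$ is independent of the basis. On overlaps of a trivializing cover of $S$, the local ideals therefore agree and glue to an ideal sheaf $I_{S'/S}$, whose zero locus $\cN_{S'/S}$ is the complement of $\cM_{S'/S}$.

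\textbf{Main obstacle.} The only step requiring care is the key equivalence in the second paragraph, specifically the Cayley--Hamilton reduction together with the fact that $n$ spanning vectors of a free rank-$n$ module form a basis over an arbitrary commutative ring $C$. I would cite the surjective-endomorphism lemma for finitely generated modules to close this. The remaining steps are formal bookkeeping.
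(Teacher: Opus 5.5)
Your proof is correct. The paper gives no argument for this statement; it is quoted from \cite{ABHS-I,ABHS-II}. So what you have written is a self-contained proof rather than a reconstruction. It follows the standard route: identify points of $R_{S'/S}$ with elements $\theta\in B\otimes_A C$, use Cayley--Hamilton to get $C[\theta]=\sum_{k<n}C\theta^k$, and reduce generation to invertibility of the coordinate matrix over an arbitrary ring $C$.

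It is worth seeing how this relates to the tools the paper does set up.
\begin{itemize}
\item The implication ``$i(c)\in C^\times\Rightarrow B_C=C[\theta]$'' is exactly Lemma~\ref{lem:unitwedge}.
\item Your change-of-basis formula $i(e')=\det(P)^{-1}i(e)$ follows from Lemma~\ref{lem:det-equals-wedge}. There $i(e)=f_\omega(\Theta)$ with $\omega=e_1\wedge\cdots\wedge e_n$, and $e'_1\wedge\cdots\wedge e'_n=\det(P)\,\omega$, so $f_{\omega'}=\det(P)^{-1}f_\omega$.
\item The converse implication, generation $\Rightarrow$ unit determinant, appears in the paper only in passing, in the proof of Theorem~\ref{thm:localcoset}. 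That use is over a local ring and leaves the degree-$n$ relation implicit. Your Cayley--Hamilton step supplies exactly that degree bound, and it works over every $C$, which the functor-of-points identification requires.
\end{itemize}

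Two minor simplifications are available. First, you do not need Theorem~\ref{thm:ABHS-rep} to reduce to the free affine case. The statement is already posed there, and your key step proves representability by the distinguished open directly; gluing is needed only to build $I_{S'/S}$. Second, in place of Vasconcelos you can note that a surjection $C^n\to C^n$ splits. The coordinate matrix then has a one-sided inverse, hence a unit determinant.
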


Let $A$ be a commutative ring and $B$ a finite locally free $A$-algebra of rank $n$.
An \emph{orientation} of $B/A$ is a generator $\omega \in \wedgeTop_A B$.
Given $\omega$, define the \emph{index form} $f_\omega:B\to A$ by
\begin{equation}\label{eq:index-form-algebra}
1\wedge \beta \wedge \cdots \wedge \beta^{n-1} \;=\; f_\omega(\beta)\,\omega \qquad (\beta\in B).
\end{equation}

The following observations are easy.

\begin{lemma}[Translation invariance]\label{lem:translation}
Let $R$ be a commutative ring, $A$ a finite free $R$-algebra of rank $n$, $\beta\in A$, and $c\in R\subseteq A$.
Then in $\wedgeTop_R A$ one has
\[
1\wedge \beta\wedge\beta^2\wedge\cdots\wedge \beta^{n-1}
=
1\wedge (\beta-c)\wedge(\beta-c)^2\wedge\cdots\wedge (\beta-c)^{n-1}.
\]
In particular, $f_\omega(\beta)=f_\omega(\beta-c)$ for any orientation $\omega$.
\end{lemma}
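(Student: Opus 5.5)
We prove the identity by expanding each power $(\beta-c)^k$ binomially and reducing modulo the earlier wedge factors, an inductive column reduction. For $k\ge 0$ the binomial theorem (valid since $c\in R$ commutes with $\beta$) gives
\[
(\beta-c)^k=\beta^k+\sum_{j=0}^{k-1}\binom{k}{j}(-c)^{k-j}\beta^j,
\]
so $(\beta-c)^k-\beta^k$ lies in the $R$-span of $1,\beta,\dots,\beta^{k-1}$. Equivalently, the tuple $(1,\beta-c,\dots,(\beta-c)^{n-1})$ is obtained from $(1,\beta,\dots,\beta^{n-1})$ by a unipotent upper-triangular matrix $U\in\GL_n(R)$ with entries $U_{jk}=\binom{k}{j}(-c)^{k-j}$ for $j\le k$.

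We then work in the exterior algebra. The wedge product $1\wedge(\beta-c)\wedge\cdots\wedge(\beta-c)^{n-1}$ is $R$-multilinear and alternating in its $n$ slots. We replace each factor $(\beta-c)^k$ by $\beta^k$ plus a combination of lower powers, proceeding from $k=1$ upward. At step $k$ the earlier slots already read $1,\beta,\dots,\beta^{k-1}$. Any term in which slot $k$ carries one of these lower powers therefore contains a repeated vector, and it vanishes by the alternating property. Induction on $k$ yields
\[
1\wedge(\beta-c)\wedge\cdots\wedge(\beta-c)^{n-1}=1\wedge\beta\wedge\cdots\wedge\beta^{n-1}.
\]
Equivalently, the two wedges differ by the factor $\det U=1$.

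Finally, for any orientation $\omega$ of $A/R$, both sides equal $f_\omega(\cdot)\,\omega$ by \eqref{eq:index-form-algebra}. Since $\omega$ generates the free rank-one module $\wedgeTop_R A$, its coefficients are unique, which gives $f_\omega(\beta)=f_\omega(\beta-c)$.

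No step is a real obstacle. The only point needing care is that $c$ lies in $R$ (identified with $R\cdot 1\subseteq A$). This is what makes the binomial coefficients and powers of $c$ into $R$-scalars, so the reduction is an honest $R$-linear column operation, and what makes the alternating property of $\wedgeTop_R$ applicable.
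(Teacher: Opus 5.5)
Your proof is correct and is essentially the paper's argument: both show that the transition matrix between $(1,\beta,\dots,\beta^{n-1})$ and $(1,\beta-c,\dots,(\beta-c)^{n-1})$ is upper triangular with $1$'s on the diagonal (you expand $(\beta-c)^k$ in powers of $\beta$, the paper expands $\beta^j$ in powers of $\beta-c$), so the top wedge is unchanged. Your slot-by-slot alternating-form reduction is just a hands-on check that $\det U=1$, and comparing coefficients against the generator $\omega$ then gives $f_\omega(\beta)=f_\omega(\beta-c)$.
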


\begin{proof}
For each $j\ge 0$, expand $\beta^j$ as an $R$-linear combination of $(\beta-c)^k$ with $k\le j$.
The transition matrix from $(1,\beta-c,\dots,(\beta-c)^{n-1})$ to $(1,\beta,\dots,\beta^{n-1})$
is upper triangular with diagonal entries $1$, so it has determinant $1$ and the top wedge is unchanged.
\end{proof}

\begin{lemma}[Unit wedge implies generation]\label{lem:unitwedge}
Let $R$ be a commutative ring and $A$ a finite free $R$-module of rank $n$.
Fix $\omega\in\wedgeTop_R A$.
If $\beta\in A$ satisfies $1\wedge\beta\wedge\cdots\wedge\beta^{n-1}=u\omega$ for some $u\in R^\times$,
then $1,\beta,\dots,\beta^{n-1}$ is an $R$-basis of $A$. In particular, if $A$ is an $R$-algebra then $A=R[\beta]$.
\end{lemma}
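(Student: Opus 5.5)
The plan is to pass from the wedge identity to a change-of-basis matrix and use the fact that a square matrix over a commutative ring with unit determinant is invertible. Fix an $R$-basis $e_1,\dots,e_n$ of $A$. Since $\wedgeTop_R A$ is free of rank one, generated by $e_1\wedge\cdots\wedge e_n$, write $\omega=c\,e_1\wedge\cdots\wedge e_n$ with $c\in R$. Expand $\beta^{i-1}=\sum_j a_{ij}e_j$ and set $M=(a_{ij})\in M_n(R)$. Multilinearity and alternation of the wedge product give
\[
1\wedge\beta\wedge\cdots\wedge\beta^{n-1}=\det(M)\,e_1\wedge\cdots\wedge e_n .
\]
The hypothesis then reads $\det(M)=uc$.

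The one point needing care is that $\omega$ is not assumed to be a generator. Here $\det M=uc$ need not be a unit unless $c$ is one. I will read the statement with $\omega$ an orientation, i.e.\ a generator of $\wedgeTop_R A$, as in the surrounding text. In that case $c\in R^\times$, so $\det M\in R^\times$. If $\omega$ were arbitrary the claim would fail (take $\omega=0$, $u=1$, $\beta=0$), so the orientation hypothesis is genuinely used, and I will say so explicitly in the proof.

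With $\det M$ a unit, the adjugate formula $M\cdot\operatorname{adj}(M)=\det(M)I$ shows that $M$ is invertible over $R$. The change-of-basis relation can be inverted, so each $e_j$ is an $R$-linear combination of $1,\beta,\dots,\beta^{n-1}$. Hence these $n$ elements span $A$. A spanning family of $n$ elements in a free module of rank $n$ whose coordinate matrix is invertible is also linearly independent: any relation $\sum r_i\beta^{i-1}=0$ gives $rM=0$, hence $r=0$. So $1,\beta,\dots,\beta^{n-1}$ is an $R$-basis of $A$.

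Finally, when $A$ is an $R$-algebra, $R[\beta]$ is an $R$-submodule of $A$ containing this basis, so $R[\beta]=A$. No step is a real obstacle; the only delicate point is using that $\omega$ is a generator, which is exactly what makes $\det M$ a unit.
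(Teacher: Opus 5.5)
Your proof is correct and follows the paper's argument: write $1\wedge\beta\wedge\cdots\wedge\beta^{n-1}$ as the determinant of the coordinate matrix times $e_1\wedge\cdots\wedge e_n$, then use that a unit determinant puts that matrix in $\GL_n(R)$. You are also right that $\omega$ must be a generator; the paper assumes this silently when it chooses a basis with $\omega=e_1\wedge\cdots\wedge e_n$, and your example $\omega=0$, $\beta=0$ shows the statement as literally written fails for $n\ge 2$ without it.
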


\begin{proof}
Choose an $R$-basis $e_1,\dots,e_n$ with $\omega=e_1\wedge\cdots\wedge e_n$.
The wedge equals $\det(T)\omega$ where $T$ is the coordinate matrix of $(1,\beta,\dots,\beta^{n-1})$.
If $\det(T)\in R^\times$, then $T\in \mathrm{GL}_n(R)$ and these elements form a basis.
\end{proof}

We can reinterprete the index form as follows.

\begin{lemma}[Determinant equals wedge coefficient]\label{lem:det-equals-wedge}
Assume $B$ is free over $A$ with basis $e_1,\dots,e_n$, and set $\omega:=e_1\wedge \cdots \wedge e_n$.
Let $\beta=\sum_{j=1}^n b_j e_j\in B$ and let $M(\beta)$ be the coefficient matrix of
$1,\beta,\dots,\beta^{n-1}$ relative to $e_1,\dots,e_n$.
Then $f_\omega(\beta)=\det(M(\beta))$.
In particular, under the universal element $\Theta=\sum x_j e_j$, one has
\[
i(e_1,\dots,e_n)= f_\omega(\Theta)\in A[x_1,\dots,x_n].
\]
\end{lemma}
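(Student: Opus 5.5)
The identity is a direct unwinding of the definitions, and I will prove it by working with the matrix $M(\beta)$.

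Write $M(\beta)=(m_{ij})$, so that $\beta^{i-1}=\sum_{j=1}^n m_{ij}e_j$ for $1\le i\le n$. Expand the wedge product
\[
1\wedge\beta\wedge\cdots\wedge\beta^{n-1}
=\Bigl(\sum_{j}m_{1j}e_j\Bigr)\wedge\cdots\wedge\Bigl(\sum_j m_{nj}e_j\Bigr)
\]
by multilinearity. Because the wedge is alternating, every term with a repeated basis vector vanishes. The surviving terms are indexed by permutations $\sigma\in S_n$, and each equals
\[
m_{1\sigma(1)}\cdots m_{n\sigma(n)}\,e_{\sigma(1)}\wedge\cdots\wedge e_{\sigma(n)}
=\operatorname{sgn}(\sigma)\,m_{1\sigma(1)}\cdots m_{n\sigma(n)}\,\omega .
\]
Summing over $\sigma$ gives the Leibniz formula, so the wedge equals $\det(M(\beta))\,\omega$. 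Since $\omega$ generates the free rank-one module $\wedgeTop_A B$, the coefficient is unique, and comparing with \eqref{eq:index-form-algebra} yields $f_\omega(\beta)=\det M(\beta)$.

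For the universal statement, apply the same argument with $A$ replaced by the polynomial ring $A[x_1,\dots,x_n]$ and $B$ replaced by $B\otimes_A A[x_1,\dots,x_n]$. The latter is free with basis $e_j\otimes 1$, and its top exterior power is $\wedgeTop_A B\otimes_A A[x]$, generated by $\omega\otimes1$. The element is now $\Theta$, and its coefficient matrix is by definition $(a_{ij})$. The first part then gives $f_\omega(\Theta)=\det(a_{ij})=i(e_1,\dots,e_n)$.

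There is no real obstacle. The only point that needs care is that the formation of $\wedgeTop$ and of the index form commutes with the base change $A\to A[x]$, which holds because the exterior power of a free module commutes with base change.
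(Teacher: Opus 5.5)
Your proof is correct and follows essentially the same route as the paper: expand $1\wedge\beta\wedge\cdots\wedge\beta^{n-1}$ by multilinearity and alternation to get $\det(M(\beta))\,\omega$, then compare coefficients with the defining relation. Your explicit base change to $A[x_1,\dots,x_n]$ for the universal element $\Theta$ spells out a step the paper leaves implicit.
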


\begin{proof}
Write $1,\beta,\dots,\beta^{n-1}$ in the basis $e_1,\dots,e_n$ to obtain the matrix $M(\beta)$.
Then multilinearity and alternation of $\wedge$ yield
\[
1\wedge\beta\wedge\cdots\wedge\beta^{n-1}
=\det(M(\beta))\,(e_1\wedge\cdots\wedge e_n),
\]
and comparison with \eqref{eq:index-form-algebra} gives the claim.
\end{proof}

\begin{remark}[Homogeneity]\label{rem:homog}
The polynomial $i(e_1,\dots,e_n)$ is homogeneous of total degree
\[
N=\sum_{i=1}^n(i-1)=n(n-1)/2
\]
in the variables $x_j$.
Equivalently, $f_\omega(u\beta)=u^N f_\omega(\beta)$ for $u\in A$.
\end{remark}

Throughout the paper, we set
\[
N \;:=\; \frac{n(n-1)}2.
\]
Throughout, $\mu_N$ denotes the fppf group scheme of $N$th roots of unity.
For a DVR $R$ we write $\mathfrak p$ for its maximal ideal and $k$ for its residue field. All schemes are assumed separated.

\begin{remark}
\label{rem:Ntame-vs-tame}
A prime $p$ is called \emph{$N$-tame} if $p\nmid N$. It is needed for the $N$th-power map $R^\times \xrightarrow{(\cdot)^N} R^\times$ to be \'etale and for the Hensel--Kummer arguments used below
(e.g.\ Lemma~\ref{lem:henselN}).

This condition is \emph{not} equivalent to classical tame ramification (i.e.\ $p\nmid n$).
Indeed, for every \emph{odd} prime $p$ one has
\[
p\nmid N \;\Longrightarrow\; p\nmid n,
\]
since $2$ is invertible modulo $p$ and $p\mid n$ would force $p\mid n(n-1)/2=N$.
However, at $p=2$ the implication can fail: if $n\equiv 2\pmod 4$ then $2\mid n$ but $2\nmid N$
(e.g.\ $n=6$, $N=15$).
\end{remark}

The following result says that index form value equals the global index up to sign.
\begin{lemma}
\label{lem:index-equals-indexform}
Let $K/\Q$ be a number field of degree $n\ge 2$ with ring of integers $\OO_K$.
Fix an orientation $\omega\in \wedge^n_{\Z}\OO_K$ and define the index form
$f_{K,\omega}:\OO_K\to\Z$. Let $\beta\in\OO_K$ be such that $K=\Q(\beta)$, and set
\[
I(\beta):=[\OO_K:\Z[\beta]]\in\Z_{\ge 1}.
\]
Then
\[
f_{K,\omega}(\beta)=\pm I(\beta),
\qquad\text{and hence}\qquad
[\OO_K:\Z[\beta]]=\bigl|f_{K,\omega}(\beta)\bigr|.
\]
In particular, $\OO_K=\Z[\beta]$ if and only if $f_{K,\omega}(\beta)=\pm 1$.
Moreover, replacing $\omega$ by $-\omega$ replaces $f_{K,\omega}$ by $-f_{K,\omega}$, so
$\bigl|f_{K,\omega}(\beta)\bigr|$ is independent of the choice of orientation.
\end{lemma}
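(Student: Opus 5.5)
The plan is to compare two $\Z$-bases of full-rank lattices via the top wedge. Since $K=\Q(\beta)$ and $\beta\in\OO_K$, the elements $1,\beta,\dots,\beta^{n-1}$ are $\Q$-linearly independent. They therefore form a $\Z$-basis of the order $\Z[\beta]$, which is a sublattice of $\OO_K$ of full rank $n$.

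Fix a $\Z$-basis $e_1,\dots,e_n$ of $\OO_K$. The orientation $\omega$ is a generator of the rank-one free $\Z$-module $\wedge^n_\Z\OO_K$, so $\omega=\pm e_1\wedge\cdots\wedge e_n$. After possibly replacing $e_1$ by $-e_1$, we may take $\omega=e_1\wedge\cdots\wedge e_n$. Let $M(\beta)\in M_n(\Z)$ be the coefficient matrix of $1,\beta,\dots,\beta^{n-1}$ in this basis. Lemma~\ref{lem:det-equals-wedge} then gives $f_{K,\omega}(\beta)=\det M(\beta)$.

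The key step is the standard lattice-index fact: if $T\in M_n(\Z)\cap\GL_n(\Q)$ expresses a basis of a sublattice $\Lambda'\subseteq\Lambda\cong\Z^n$ in terms of a basis of $\Lambda$, then $[\Lambda:\Lambda']=|\det T|$. I would prove this via Smith normal form: there are $U,V\in\GL_n(\Z)$ with $UTV=\mathrm{diag}(d_1,\dots,d_n)$. Changing bases of $\Lambda$ and $\Lambda'$ by unimodular matrices changes neither the index nor $|\det|$. In the diagonal situation one has $\Lambda/\Lambda'\cong\bigoplus\Z/d_i\Z$, whose order is $\prod|d_i|=|\det T|$. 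Applying this with $\Lambda=\OO_K$, $\Lambda'=\Z[\beta]$ and $T=M(\beta)$ gives $|f_{K,\omega}(\beta)|=I(\beta)$, hence $f_{K,\omega}(\beta)=\pm I(\beta)$.

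The remaining claims follow directly. The equivalence $\OO_K=\Z[\beta]\iff f_{K,\omega}(\beta)=\pm1$ holds because the index is $1$ exactly when the lattices coincide. The converse direction is also immediate from Lemma~\ref{lem:unitwedge}. Replacing $\omega$ by $-\omega$ negates the coefficient in \eqref{eq:index-form-classical}, since $1\wedge\cdots\wedge\beta^{n-1}=f\,\omega=(-f)(-\omega)$. As $\pm\omega$ are the only orientations, $|f_{K,\omega}(\beta)|$ does not depend on the choice. The only nontrivial ingredient is the index-determinant identity, and this is standard linear algebra over the PID $\Z$.
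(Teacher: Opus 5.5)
Your proposal is correct and follows essentially the same route as the paper: you write $1,\beta,\dots,\beta^{n-1}$ in a $\Z$-basis of $\OO_K$, identify $f_{K,\omega}(\beta)$ with $\pm\det T$ via the top wedge, and then use $[\OO_K:\Z[\beta]]=|\det T|$. The only difference is that you justify the index--determinant identity via Smith normal form, while the paper simply takes it as standard.
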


\begin{proof}
Choose a $\Z$-basis $e_1,\dots,e_n$ of $\OO_K$, and write
$\omega_0:=e_1\wedge\cdots\wedge e_n$. Since $\wedge^n_\Z\OO_K$ is a free rank-$1$ $\Z$-module,
we have $\omega=\pm\omega_0$.

Because $K=\Q(\beta)$ and $\beta\in\OO_K$, the elements $1,\beta,\dots,\beta^{n-1}$ are
$\Q$-linearly independent and therefore $\Z$-linearly independent; hence they form a $\Z$-basis of
the order $\Z[\beta]$.  Write each $\beta^{j}$ in the basis $e_1,\dots,e_n$:
\[
\beta^{j}=\sum_{i=1}^n t_{i,j+1} e_i\qquad(0\le j\le n-1),
\]
and let $T=(t_{i,j})\in M_n(\Z)$ be the resulting change-of-basis matrix from
$(e_1,\dots,e_n)$ to $(1,\beta,\dots,\beta^{n-1})$.

By multilinearity and alternation of the wedge product,
\[
1\wedge \beta \wedge \cdots \wedge \beta^{n-1}
=
\det(T)\,(e_1\wedge\cdots\wedge e_n)
=
\det(T)\,\omega_0.
\]
Comparing with the defining relation
$1\wedge \beta \wedge \cdots \wedge \beta^{n-1}=f_{K,\omega}(\beta)\,\omega$
and using $\omega=\pm\omega_0$ gives $f_{K,\omega}(\beta)=\pm\det(T)$.

Finally, $\Z[\beta]\subseteq \OO_K$ is an inclusion of full-rank $\Z$-lattices whose matrix in the
bases $(1,\beta,\dots,\beta^{n-1})$ and $(e_1,\dots,e_n)$ is exactly $T$. Therefore
\[
[\OO_K:\Z[\beta]] = |\det(T)|,
\]
and combining with $f_{K,\omega}(\beta)=\pm\det(T)$ yields the claim.
\end{proof}

% ============================================================
\section{Kummer theory and the local obstruction class}
\label{sec:kummer}
% ============================================================

Let $S$ be a scheme on which $N$ is invertible. The Kummer exact sequence reads
\[
1\to \mu_N \to \Gm \xrightarrow{(\cdot)^N} \Gm \to 1.
\]
Taking cohomology yields an exact sequence
\begin{equation}\label{eq:kummer-H1}
0\to \Gamma(S,\OO_S^\times)/(\Gamma(S,\OO_S^\times))^N \longrightarrow H^1(S,\mu_N)
\longrightarrow \Pic(S)[N]\to 0.
\end{equation}

We have the following straightforward observation.

\begin{lemma}\label{lem:H1-DVR}
Let $R$ be a complete disrete valuation ring (DVR) such that $N$ is invertible in $R$.
Then $\Pic(\Spec R)=0$ and hence
\[
H^1(\Spec R,\mu_N) \;\cong\; R^\times/(R^\times)^N.
\]
\end{lemma}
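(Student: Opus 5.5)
The plan is to plug two standard facts into the Kummer sequence \eqref{eq:kummer-H1} with $S=\Spec R$.

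First, $\Pic(\Spec R)=0$. A DVR is a local ring, and every finitely generated projective module over a local ring is free. Hence every invertible $R$-module, equivalently every line bundle on $\Spec R$, is isomorphic to $R$. Concretely, a line bundle on $\Spec R$ is a rank-one projective $R$-module $P$. By Nakayama's lemma, any lift of a generator of $P\otimes_R k$ generates $P$, so $P\cong R/\mathrm{Ann}(P)$. Since $P$ is projective of rank one over a domain, the annihilator is zero and $P\cong R$. In particular $\Pic(\Spec R)[N]=0$. Note that neither completeness nor the invertibility of $N$ is needed for this step.

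Second, the Kummer sequence is exact on $\Spec R$. Since $N\in R^\times$, the map $(\cdot)^N:\Gm\to\Gm$ is a surjection of fppf (indeed étale) sheaves with kernel $\mu_N$. Taking cohomology and using Hilbert 90, $H^1(S,\Gm)=\Pic(S)$, gives exactly \eqref{eq:kummer-H1} with $\Gamma(S,\OO_S^\times)=R^\times$. Because the right-hand term $\Pic(\Spec R)[N]$ vanishes, the injection $R^\times/(R^\times)^N\hookrightarrow H^1(\Spec R,\mu_N)$ is an isomorphism.

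One subtlety concerns which topology computes $H^1(\Spec R,\mu_N)$. Since $\mu_N$ is étale when $N$ is invertible, fppf and étale cohomology agree (Grothendieck's comparison for smooth group schemes), so either reading gives the same group. This is the only point requiring care; the rest is formal. Completeness of $R$ is not used, but it is harmless to keep it as in the statement.
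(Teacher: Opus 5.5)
Your proposal is correct and follows the paper's proof: show $\Pic(\Spec R)=0$ and read off the isomorphism from the Kummer sequence \eqref{eq:kummer-H1}. The only difference is cosmetic: the paper gets $\Pic(\Spec R)=0$ from $R$ being a PID, while you use that $R$ is local (via Nakayama). Your remarks that completeness is never used and that étale and fppf cohomology of $\mu_N$ agree are accurate additions.
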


\begin{proof}
A DVR is a PID, hence $\Pic(\Spec R)=0$. Apply \eqref{eq:kummer-H1}.
\end{proof}

\begin{definition}[Kummer class of a unit]\label{def:kummerclass}
Let $R$ be a ring with $N$ invertible.
For $u\in R^\times$, define its \emph{Kummer class} $[u]\in H^1(\Spec R,\mu_N)$ to be the class of the
$\mu_N$-torsor
\[
T_u := \Spec R[y]/(y^N-u).
\]
If $R$ is a DVR, $[u]$ corresponds to the coset $u\cdot (R^\times)^N$ via Lemma~\ref{lem:H1-DVR}.
\end{definition}

\begin{lemma}\label{lem:unitvalue}
Let $R$ be a complete DVR with $N$ invertible in $R$.
Let $A$ be a finite free commutative $R$-algebra of rank $n$ that is a DVR,
with residue degree $1$, and assume there exists a uniformizer $\pi\in A$ such that $A=R[\pi]$.
Fix an orientation $\omega\in \wedgeTop_R A$ and let $f_\omega:A\to R$ be \eqref{eq:index-form-algebra}. Then $f_\omega(\pi)\in R^\times$.
\end{lemma}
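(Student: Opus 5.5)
The plan is to show directly that $1,\pi,\dots,\pi^{n-1}$ is an $R$-basis of $A$, and then read off $f_\omega(\pi)$ as the determinant of a change-of-basis matrix between two $R$-bases. Such a determinant lies in $R^\times$.

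\textbf{Step 1: a power basis.} Since $A=R[\pi]$ and $A$ is free of rank $n$ over $R$, I will show that $1,\pi,\dots,\pi^{n-1}$ is an $R$-basis. Let $F\in R[X]$ be the characteristic polynomial of multiplication by $\pi$ on $A$. It is monic of degree $n$, and Cayley--Hamilton gives $F(\pi)=0$. Hence $A=R[\pi]$ is spanned over $R$ by $1,\pi,\dots,\pi^{n-1}$. A surjection $R^n\to A$ between free $R$-modules of the same finite rank $n$ is an isomorphism, since a surjective endomorphism of a finitely generated module is injective. So these $n$ elements form a basis.

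The hypotheses ``DVR, residue degree $1$, $\pi$ a uniformizer'' are not really needed for this step; only $A=R[\pi]$ and freeness of rank $n$ are used. They could alternatively be used to give the basis via the filtration $\pi^iA/\pi^{i+1}A\cong k$. In that argument one uses that $A/\mathfrak pA$ has length $n$ and that $\mathfrak pA=\pi^eA$ with $e=n$, and Nakayama then gives the basis. The Cayley--Hamilton argument avoids this bookkeeping, so I will use it.

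\textbf{Step 2: comparing wedges.} Let $e_1,\dots,e_n$ be any $R$-basis of $A$. Then $e_1\wedge\cdots\wedge e_n$ generates $\wedgeTop_R A$, so $\omega=v\,e_1\wedge\cdots\wedge e_n$ with $v\in R^\times$. By Lemma~\ref{lem:det-equals-wedge},
\[
1\wedge\pi\wedge\cdots\wedge\pi^{n-1}=\det(M(\pi))\,e_1\wedge\cdots\wedge e_n .
\]
By Step 1, $M(\pi)$ is the transition matrix between two $R$-bases, so $M(\pi)\in\GL_n(R)$ and $\det M(\pi)\in R^\times$. Combining the two displays gives $f_\omega(\pi)=v^{-1}\det M(\pi)\in R^\times$.

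\textbf{Obstacle and side remarks.} The hypothesis that $N$ is invertible plays no role in this lemma. The only point needing care is that spanning implies being a basis for free modules of equal rank. This is the standard fact that a surjective endomorphism of a finitely generated module over a commutative ring is bijective. I do not expect any real obstacle.
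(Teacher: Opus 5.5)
Your proposal is correct and follows the same route as the paper: show that $1,\pi,\dots,\pi^{n-1}$ is an $R$-basis of $A$, so $1\wedge\pi\wedge\cdots\wedge\pi^{n-1}$ generates $\wedgeTop_R A$ and is therefore a unit multiple of $\omega$. The paper only asserts the basis claim; you justify it with Cayley--Hamilton and the fact that a surjective endomorphism of a finitely generated module is injective, and you are right that the DVR, residue-degree, and $N$-invertibility hypotheses play no role.
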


\begin{proof}
Since $A=R[\pi]$ and $A$ has rank $n$ over $R$, the set $\{1,\pi,\dots,\pi^{n-1}\}$ is an $R$-basis.
Thus $1\wedge\pi\wedge\cdots\wedge\pi^{n-1}$ is a generator of $\wedgeTop_R A$, hence equals a unit multiple of $\omega$.
\end{proof}

It gives rise to the following local obstruction class.

\begin{definition}\label{def:kappa}
Keep the notations as in~\ref{lem:unitvalue}. Define the \emph{local obstruction class}
\[
\kappa(A/R,\omega) \;:=\; [\,f_\omega(\pi)\,] \in H^1(\Spec R,\mu_N)\cong R^\times/(R^\times)^N.
\]
\end{definition}

To state our main result of this section, we need some elementary lemmas as follows.

\begin{lemma}\label{lem:henselN}
Let $R$ be a complete DVR with residue field $k$ of characteristic $p$.
Assume $p\nmid N$. Then for $u\in R^\times$,
\[
u\in (R^\times)^N \quad\Longleftrightarrow\quad \bar u \in (k^\times)^N.
\]
\end{lemma}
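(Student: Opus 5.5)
The plan is to prove the two implications separately. The forward one is a reduction mod $\mathfrak p$. The reverse one is Hensel's lemma applied to the polynomial $y^N-u$, whose derivative is a unit exactly because $p\nmid N$.

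\emph{Forward direction.} Suppose $u=v^N$ with $v\in R^\times$. Reducing modulo $\mathfrak p$ gives $\bar u=\bar v^{\,N}$. Since $v$ is a unit, $\bar v\neq 0$, so $\bar v\in k^\times$ and hence $\bar u\in (k^\times)^N$.

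\emph{Reverse direction.} Suppose $\bar u=\bar a^{\,N}$ with $\bar a\in k^\times$. Choose any lift $a_0\in R$ of $\bar a$; it is automatically a unit, since its reduction is nonzero. Put $F(y)=y^N-u\in R[y]$. Then
\[
F(a_0)\equiv 0 \pmod{\mathfrak p}
\qquad\text{and}\qquad
F'(a_0)=N a_0^{N-1}.
\]
Here $N$ is a unit in $R$ because $p\nmid N$, and $a_0$ is a unit, so $F'(a_0)\in R^\times$. Thus $\bar a$ is a simple root of $\bar F$ in $k$.

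Because $R$ is complete, Hensel's lemma in its simple-root form applies. It produces $a\in R$ with $a\equiv a_0\pmod{\mathfrak p}$ and $F(a)=0$, that is, $a^N=u$. Since $a\equiv a_0$ with $a_0\in R^\times$, also $a\in R^\times$, so $u\in (R^\times)^N$.

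If one prefers not to quote Hensel, the same conclusion follows from Newton iteration $a_{i+1}=a_i-F(a_i)/F'(a_i)$. Because $F'(a_i)$ stays a unit, one has $v(F(a_{i+1}))\ge 2v(F(a_i))$, so the sequence is Cauchy and converges by completeness.

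\emph{Main point to check.} The only delicate step is ensuring $F'(a_0)$ is a unit, and this is exactly where the hypothesis $p\nmid N$ enters. Completeness (or, more generally, Henselianity) of $R$ is what supplies the limit. The residue characteristic $p=0$ is covered trivially, since then $N$ is invertible in $k$ and the argument is unchanged.
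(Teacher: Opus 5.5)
Your proof is correct and matches the paper's: the forward direction by reduction modulo $\mathfrak p$, and the converse by Hensel's lemma applied to $X^N-u$ at a unit lift $v_0$, with $p\nmid N$ guaranteeing that $F'(v_0)=Nv_0^{N-1}$ is a unit. Your Newton-iteration remark and your comment on residue characteristic $0$ are valid but go beyond what the paper's proof needs.
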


\begin{proof}
If $u=v^N$ then $\bar u=\bar v^{\,N}$.
Conversely, assume $\bar u=\bar v_0^{\,N}$ and lift $\bar v_0$ to $v_0\in R^\times$.
Let $F(X)=X^N-u$. Then $F(v_0)\equiv 0\pmod{\mathfrak p}$ and $F'(v_0)=Nv_0^{N-1}\in R^\times$
since $p\nmid N$. Hensel's lemma gives $v\in R^\times$ with $v^N=u$.
\end{proof}

\begin{lemma}\label{lem:e=n}
Let $R$ be a DVR with maximal ideal $\mathfrak p$, and let $A$ be a finite free $R$-algebra of rank $n$
that is a DVR with maximal ideal $\mathfrak m$ and residue field $A/\mathfrak m\cong R/\mathfrak p$.
Then the ramification index of $A/R$ is $e=n$, and $\mathfrak p A=\mathfrak m^n$.
In particular, if $n\ge 2$ then $\mathfrak pA\subseteq \mathfrak m^2$.
\end{lemma}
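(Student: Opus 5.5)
The plan is the standard ramification/residue-degree count for a finite free extension of DVRs; the only inputs are the structure of $\mathfrak pA$ as a principal ideal and a length computation of $A/\mathfrak pA$ over $R$.

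\textbf{Step 1: $\mathfrak pA$ is a power of $\mathfrak m$.} Since $A$ is a DVR, every nonzero ideal is a power of $\mathfrak m$. The ideal $\mathfrak pA$ is nonzero: if it were zero, then a uniformizer $p_0$ of $R$ would annihilate $A$, but $A$ is free of rank $n\ge1$. It is also proper, since $p_0$ is a nonunit in $A$: $A$ is integral over $R$, and $p_0\in\mathfrak p=\mathfrak m\cap R$, where the equality holds because $\mathfrak m\cap R$ is a prime ideal containing $\mathfrak p$ (the contraction of a maximal ideal under an integral extension is maximal). Hence $\mathfrak pA=\mathfrak m^e$ for a unique $e\ge1$, and this $e$ is by definition the ramification index.

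\textbf{Step 2: compute $\dim_k A/\mathfrak pA$ in two ways.} On one hand, $A$ is free of rank $n$ over $R$, so $A/\mathfrak pA\cong A\otimes_R k$ is a $k$-vector space of dimension $n$. On the other hand, filter $A/\mathfrak m^e$ by the ideals $\mathfrak m^i/\mathfrak m^e$ for $0\le i\le e$. Each successive quotient $\mathfrak m^i/\mathfrak m^{i+1}$ is a one-dimensional $A/\mathfrak m$-vector space, generated by the image of $\varpi^i$ for a uniformizer $\varpi$ of $A$. The $R$-module structure on these quotients factors through $R/\mathfrak p=k$, because $\mathfrak p\subseteq\mathfrak m$ kills each of them. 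By the residue-degree hypothesis $A/\mathfrak m\cong k$, and this isomorphism is compatible with the $k$-structure coming from $R\to A$, since the residue-field isomorphism is induced by that map. So each graded piece has $k$-dimension $1$, and therefore $\dim_k A/\mathfrak m^e=e$. Comparing the two counts gives $e=n$ and $\mathfrak pA=\mathfrak m^n$.

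\textbf{Step 3: the final claim.} If $n\ge2$, then $\mathfrak m^n\subseteq\mathfrak m^2$, so $\mathfrak pA\subseteq\mathfrak m^2$.

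The only point needing care is in Step 2: the hypothesis must be read as saying the natural map $k\to A/\mathfrak m$ is an isomorphism, so that the $k$-dimension of each graded piece really is $1$. Everything else is routine.
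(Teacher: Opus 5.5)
Your proof is correct and is essentially the paper's argument: the paper simply cites $n=ef$ and $\mathfrak pA=\mathfrak m^e$, and your two-way computation of $\dim_k A/\mathfrak pA$ is the standard proof of $n=ef$ specialized to $f=1$. Your caveat that the natural map $k\to A/\mathfrak m$ must be the isomorphism is well taken: an abstract isomorphism would not suffice for infinite residue fields (e.g.\ $R=\mathbb{C}(t)[[x]]\subset A=\mathbb{C}(t^{1/2})[[x]]$ gives $e=1$ and $n=2$), but the issue does not arise in the paper's application, where the residue fields are finite.
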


\begin{proof}
Since $A$ is a DVR finite over $R$, $A$ is torsion-free and hence finite flat over $R$ of rank $n$.
Let $e$ be the ramification index and $f$ the residue degree. Then $n=ef$.
The residue field assumption gives $f=1$, hence $e=n$.
The identity $\mathfrak pA=\mathfrak m^e$ holds for extensions of DVRs, giving $\mathfrak pA=\mathfrak m^n$.
\end{proof}

We state and prove our local Kummer rigidity.
\begin{theorem}\label{thm:localcoset}
Let $R$ be a complete DVR with maximal ideal $\mathfrak p$, residue field $k$, and residue characteristic $p$.
Let $A$ be a finite free commutative $R$-algebra of rank $n\ge 2$ such that:
\begin{enumerate}[label=\textup{(\roman*)},leftmargin=*]
\item $A$ is a DVR with maximal ideal $\mathfrak m$;
\item $A/\mathfrak m \cong k$;
\item there exists a uniformizer $\pi\in\mathfrak m$ with $A=R[\pi]$.
\end{enumerate}
Fix an orientation $\omega\in \wedgeTop_R A$ and let $f_\omega:A\to R$ be defined by \eqref{eq:index-form-algebra}.
Assume $p\nmid N$.

Then the set of unit values of $f_\omega$ on $R$-generators of $A$ is a single coset:
\[
\{\, f_\omega(\beta)\in R^\times : A=R[\beta] \,\}
=
f_\omega(\pi)\cdot (R^\times)^N.
\]
Equivalently, the composite map
\[
\cM_{A/R}(R)\xrightarrow{f_\omega} R^\times \longrightarrow R^\times/(R^\times)^N \;\cong\; H^1(\Spec R,\mu_N)
\]
is constant, with value $[f_\omega(\pi)]$.
\end{theorem}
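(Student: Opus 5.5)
The plan is to prove the two inclusions separately. The inclusion $\supseteq$ is immediate from homogeneity. The inclusion $\subseteq$ reduces, via translation and a valuation argument, to comparing an arbitrary uniformizer generator with $\pi$ modulo $\mathfrak p$, and then applying Lemma~\ref{lem:henselN}. The reformulation in terms of $\cM_{A/R}(R)$ and $H^1(\Spec R,\mu_N)$ is then just Lemma~\ref{lem:H1-DVR} together with Lemma~\ref{lem:unitwedge}. That lemma says the $R$-points of $\cM_{A/R}$ are exactly the $\beta$ with $A=R[\beta]$, equivalently those with $f_\omega(\beta)\in R^\times$.

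\emph{Step 1 ($\supseteq$).} Take $u\in R^\times$. Then $u\pi$ is again a uniformizer and $R[u\pi]=R[\pi]=A$. By Remark~\ref{rem:homog}, $f_\omega(u\pi)=u^N f_\omega(\pi)$, so every element of $f_\omega(\pi)(R^\times)^N$ is attained. By Lemma~\ref{lem:unitvalue}, $f_\omega(\pi)$ is a unit, so these values are units.

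\emph{Step 2 (reduction to uniformizers).} Let $A=R[\beta]$. Since $A/\mathfrak m\cong k$, we may choose $c\in R$ with $\beta-c\in\mathfrak m$. By Lemma~\ref{lem:translation}, $f_\omega(\beta)=f_\omega(\beta-c)$ and $R[\beta-c]=A$, so we may assume $\beta\in\mathfrak m$.

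We claim $\beta\notin\mathfrak m^2$. By Lemma~\ref{lem:e=n}, $\mathfrak p A\subseteq\mathfrak m^2$, so $R+\mathfrak m^2$ is a subring of $A$. If $\beta\in\mathfrak m^2$, then $A=R[\beta]\subseteq R+\mathfrak m^2$. But $\pi\notin R+\mathfrak m^2$: writing $\pi=r+x$ with $x\in\mathfrak m^2$ forces $r\in\mathfrak m\cap R=\mathfrak p\subseteq\mathfrak m^2$, hence $\pi\in\mathfrak m^2$, which is absurd. Hence $\beta=v\pi$ with $v\in A^\times$. Choose $u_0\in R^\times$ lifting $\bar v\in k^\times$, so that $\beta=u_0\pi+\pi w$ with $w\in\mathfrak m$, and hence $\beta^j\equiv u_0^j\pi^j \pmod{\mathfrak m^{j+1}}$.

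\emph{Step 3 (triangularity mod $\mathfrak p$; the main point).} We need to know how elements of $\mathfrak m^{j+1}$ look in the basis $1,\pi,\dots,\pi^{n-1}$. Take $\pi$ with normalized valuation $1$, so that $v_A(\pi^i)=i$ and $v_A(a)=n\,v_R(a)$ for $a\in R$. Then
\[
v_A\Bigl(\sum_i a_i\pi^i\Bigr)=\min_i\bigl(n\,v_R(a_i)+i\bigr),
\]
since the terms have distinct valuations mod $n$. Consequently, an element of $\mathfrak m^{j+1}$ has $a_i\in\mathfrak p$ for all $i\le j$.

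Let $T$ be the matrix expressing $(1,\beta,\dots,\beta^{n-1})$ in the basis $(1,\pi,\dots,\pi^{n-1})$. Reduced mod $\mathfrak p$, $T$ is triangular with diagonal entries $u_0^j$ for $j=0,\dots,n-1$. Hence $\det T\equiv u_0^{N}\pmod{\mathfrak p}$. Since $f_\omega(\beta)=\det(T)\,f_\omega(\pi)$, the factor $\det T$ is a unit whose residue is an $N$th power. By Lemma~\ref{lem:henselN}, which uses $p\nmid N$, we get $\det T\in(R^\times)^N$, which proves $\subseteq$.

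I expect the only delicate point to be this valuation bookkeeping in Step 3, namely the distinct-residues-mod-$n$ argument; the rest is formal.
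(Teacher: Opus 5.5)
Your proposal is correct and follows the paper's proof essentially step for step: translate $\beta$ into $\mathfrak m$, rule out $\beta\in\mathfrak m^2$ via $R+\mathfrak m^2$, write $\beta=v\pi$, show the change-of-basis matrix is triangular mod $\mathfrak p$ with diagonal $u_0^j$, conclude with Lemma~\ref{lem:henselN}, and use homogeneity for the reverse inclusion. The only cosmetic difference is in the triangularity check: the paper reads it off in $A/\mathfrak pA\cong k[\bar\pi]/(\bar\pi^{n})$, whereas you use the equivalent valuation formula $v_A(\sum_i a_i\pi^i)=\min_i(n\,v_R(a_i)+i)$.
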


\begin{proof}
Let $\beta\in A$ be an $R$-generator, so $A=R[\beta]$.
Choose $b_0\in R$ lifting the residue class of $\beta$ in $A/\mathfrak m\cong k$, and set $\beta_0:=\beta-b_0\in\mathfrak m$.
By Lemma~\ref{lem:translation}, $f_\omega(\beta)=f_\omega(\beta_0)$ and $A=R[\beta_0]$.

If $\beta_0\in \mathfrak m^2$, then for every polynomial $P(T)\in R[T]$ one has
$P(\beta_0)\equiv P(0)\pmod{\mathfrak m^2}$, hence $R[\beta_0]\subseteq R+\mathfrak m^2$.
We claim $R+\mathfrak m^2$ contains no element of valuation $1$ in $A$, contradicting $A=R[\beta_0]$.

Indeed, let $x=r+y$ with $r\in R$ and $y\in \mathfrak m^2$.
If $r\notin\mathfrak p$, then $r$ is a unit of $R$ and hence of $A$, so $v_A(x)=0$.
If $r\in\mathfrak p$, then $x\in \mathfrak pA+\mathfrak m^2$.
By Lemma~\ref{lem:e=n}, $\mathfrak pA=\mathfrak m^n\subseteq\mathfrak m^2$ (since $n\ge 2$),
so $x\in\mathfrak m^2$ and thus $v_A(x)\ge 2$.
In either case $v_A(x)\neq 1$, proving the claim.

Therefore $\beta_0\notin \mathfrak m^2$ and hence $v_A(\beta_0)=1$, i.e.\ $\beta_0$ is a uniformizer.

Thus $\beta_0=\pi u$ for some $u\in A^\times$.
Let $u_0\in k^\times$ be the image of $u$ modulo $\mathfrak m$.

Write $u=u_0+\pi w$ for some $w\in A$ (since $u\in A^\times$ and $u_0\in k^\times$ is its residue class).
Then for each $j\ge 1$ we have the identity in $A$
\[
(\pi u)^j=\pi^j(u_0+\pi w)^j=\pi^j u_0^j+\pi^{j+1}\cdot(\text{some element of }A),
\]
hence
\[
(\pi u)^j \equiv u_0^j\,\pi^j \pmod{\pi^{j+1}A}.
\]
Reducing modulo $\mathfrak pA=\mathfrak m^n$ and using $j+1\le n$ (since $1\le j\le n-1$) yields the
congruence
\[
\overline{(\pi u)^j}\equiv u_0^j\,\bar\pi^j \pmod{\bar\pi^{j+1}}
\qquad\text{in }A/\mathfrak pA.
\]
In $A/\mathfrak pA$, the images of $1,\pi,\dots,\pi^{n-1}$ form a $k$-basis and $\bar\pi$ is nilpotent of order $n$.
For $1\le j\le n-1$,
\[
\overline{\beta_0^{\,j}}
=\overline{(\pi u)^j}
=\bar\pi^{\,j}\bar u^{\,j}
\equiv u_0^j \bar\pi^{\,j}\pmod{\bar\pi^{\,j+1}},
\]
since $\bar u\equiv u_0\pmod{\bar\pi}$.
Hence the change-of-basis matrix $\bar T$ from $(1,\bar\pi,\dots,\bar\pi^{n-1})$ to
$(1,\overline{\beta_0},\dots,\overline{\beta_0^{n-1}})$ is upper triangular with diagonal entries
$1,u_0,u_0^2,\dots,u_0^{n-1}$, so
\[
\det(\bar T)=u_0^{1+2+\cdots+(n-1)}=u_0^N\in (k^\times)^N.
\]

Since $A=R[\beta_0]$ and $\beta_0$ is integral over $R$, the elements $1,\beta_0,\dots,\beta_0^{n-1}$ generate the free $R$-module $A$; as $R$ is local and $\mathrm{rank}_R(A)=n$, they form an $R$-basis, so the change-of-basis matrix $T$ indeed lies in $\GL_n(R)$.

Let $T\in\mathrm{GL}_n(R)$ be the change-of-basis matrix from $(1,\pi,\dots,\pi^{n-1})$ to $(1,\beta_0,\dots,\beta_0^{n-1})$.
Then
\[
1\wedge \beta_0\wedge\cdots\wedge \beta_0^{n-1} = \det(T)\cdot (1\wedge \pi\wedge\cdots\wedge\pi^{n-1}),
\]
so $f_\omega(\beta_0)=\det(T)f_\omega(\pi)$.
Reduction modulo $\mathfrak p$ gives $\det(T)\bmod\mathfrak p=\det(\bar T)\in (k^\times)^N$.
By Lemma~\ref{lem:henselN}, $\det(T)\in (R^\times)^N$.
Hence $f_\omega(\beta)=f_\omega(\beta_0)\in f_\omega(\pi)\cdot (R^\times)^N$.

For any $v\in R^\times$, set $\beta=v\pi$. Then $A=R[\beta]$ and by homogeneity
$f_\omega(\beta)=v^N f_\omega(\pi)$ (Remark~\ref{rem:homog}).
Thus all of $f_\omega(\pi)(R^\times)^N$ occurs.
\end{proof}

\begin{corollary}\label{cor:local-solv-kummer}
In the situation of Theorem~\ref{thm:localcoset}, for any $\varepsilon\in R^\times$ the equation
\[
f_\omega(x)=\varepsilon \qquad (x\in A)
\]
has a solution with $x$ an $R$-generator of $A$ if and only if
\[
[\varepsilon]=[f_\omega(\pi)] \quad \text{in } H^1(\Spec R,\mu_N)\cong R^\times/(R^\times)^N.
\]
Equivalently,
\[
f_\omega(\{\beta\in A:\ A=R[\beta]\})\cap R^\times = f_\omega(\pi)\cdot (R^\times)^N.
\]
\end{corollary}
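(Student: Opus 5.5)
This corollary follows directly from Theorem~\ref{thm:localcoset}; I would just unwind the set equality in both directions. Fix $\varepsilon\in R^\times$.

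\emph{Necessity.} Suppose $x\in A$ is an $R$-generator of $A$ with $f_\omega(x)=\varepsilon$. Then $f_\omega(x)$ is a unit value of $f_\omega$ on a generator. By Theorem~\ref{thm:localcoset} it lies in $f_\omega(\pi)\cdot(R^\times)^N$. Hence $[\varepsilon]=[f_\omega(\pi)]$ in $R^\times/(R^\times)^N$. By Lemma~\ref{lem:H1-DVR} and Definition~\ref{def:kummerclass}, this is the stated equality in $H^1(\Spec R,\mu_N)$; note that $N$ is invertible in $R$ because $p\nmid N$.

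\emph{Sufficiency.} Suppose $[\varepsilon]=[f_\omega(\pi)]$, so $\varepsilon=v^N f_\omega(\pi)$ for some $v\in R^\times$. Take $x=v\pi$.
\begin{itemize}
\item Since $v$ is a unit, $R[v\pi]=R[\pi]=A$, so $x$ is a generator.
\item By homogeneity (Remark~\ref{rem:homog}), $f_\omega(x)=v^N f_\omega(\pi)=\varepsilon$.
\end{itemize}
This is exactly the last paragraph of the proof of Theorem~\ref{thm:localcoset}.

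\emph{The equivalent formulation.} The intersection $f_\omega(\{\beta:A=R[\beta]\})\cap R^\times$ is, by definition, the set of unit values of $f_\omega$ on generators. Theorem~\ref{thm:localcoset} identifies this set with $f_\omega(\pi)(R^\times)^N$.

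Lemma~\ref{lem:unitwedge} shows that every unit value is attained only by generators, so the restriction to generators costs nothing: for $\varepsilon\in R^\times$, solvability of $f_\omega(x)=\varepsilon$ in $A$ is the same as solvability by a generator.

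I expect no real obstacle. The only care needed is to keep track of the identification $H^1\cong R^\times/(R^\times)^N$, which requires $N$ to be invertible in $R$ and $\Pic(\Spec R)=0$.
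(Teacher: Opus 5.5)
Your proposal is correct and is essentially the paper's argument. The paper reads the corollary off the single-coset statement of Theorem~\ref{thm:localcoset} and the Kummer identification of Lemma~\ref{lem:H1-DVR}; your necessity/sufficiency unwinding (the witness $x=v\pi$ and the remark via Lemma~\ref{lem:unitwedge}) only spells out what the paper leaves implicit.
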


\begin{proof}
By Theorem~\ref{thm:localcoset}, unit values on generators form the single coset $f_\omega(\pi)(R^\times)^N$.
The Kummer identification $R^\times/(R^\times)^N\simeq H^1(\Spec R,\mu_N)$ is Lemma~\ref{lem:H1-DVR}.
\end{proof}

\begin{remark}[Affine group viewpoint]
Translation invariance and homogeneity encode the action of $\Aff_1=\Ga\rtimes\Gm$ on $\A^1$.
Theorem~\ref{thm:localcoset} rigidifies the induced orbit geometry of $\cM_{A/R}(R)$ in the residue-degree~$1$ DVR case.
\end{remark}

%==========================================================
\section{Chebotarev--Kummer production of obstructing primes}
\label{sec:chebotarev-kummer}
%==========================================================

In this section we package the prime-production step used later in the sieve. Fix an integer $N\ge 2$ and $g\in\Z\setminus\{0\}$.  The condition
$g\in(\F_q^\times)^N$ (or not) is detected by splitting in a Kummer extension of a
cyclotomic field.  Chebotarev then supplies infinitely many primes $q$ with
$g\notin(\F_q^\times)^N$, and in fact a positive-density set.

%----------------------------------------------------------
\subsection{The Kummer--cyclotomic field and a splitting criterion}
%----------------------------------------------------------
Firstly, we recall some basics on the Galois theory of Kummer--cyclotomic fields.

\begin{lemma}
\label{lem:kummer-galois}
Let $N\ge 2$ and $g\in\Z\setminus\{0\}$. Put
\[
K:=\Q(\zeta_{2N}),\qquad E:=\Q(\zeta_{N},g^{1/N}),\qquad L:=K(g^{1/N})=\Q(\zeta_{2N},g^{1/N}).
\]
Then $K/\Q$ and $E/\Q$ are finite Galois extensions and $L=KE$.
In particular, $L/\Q$ is finite Galois, and $L$ is the splitting field over $\Q$
of the polynomial $(X^N-g)(X^{2N}-1)$.
\end{lemma}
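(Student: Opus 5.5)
The plan is to treat $K$ and $E$ separately as splitting fields over $\Q$ and then assemble $L$ as a compositum.

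First, $K=\Q(\zeta_{2N})$ is the splitting field of $X^{2N}-1$ over $\Q$. Its roots are the powers of $\zeta_{2N}$, so $K/\Q$ is finite and Galois, since splitting fields over a field of characteristic $0$ are normal and separable.

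Second, fix one real or complex root $\gamma$ of $X^N-g$, so that $g^{1/N}$ means $\gamma$. The roots of $X^N-g$ are then exactly $\zeta_N^k\gamma$ for $0\le k\le N-1$. Hence the splitting field of $X^N-g$ is $\Q(\gamma,\zeta_N\gamma,\dots)$.

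This field contains $\zeta_N=(\zeta_N\gamma)/\gamma$, which is legitimate because $\gamma\neq0$ as $g\neq0$. Conversely, $\Q(\zeta_N,\gamma)$ contains every root. So the splitting field equals $E$, and $E/\Q$ is finite Galois.

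Third, $L=\Q(\zeta_{2N},\gamma)$. Since $\zeta_N=\zeta_{2N}^2\in K$, we get $KE=\Q(\zeta_{2N},\zeta_N,\gamma)=\Q(\zeta_{2N},\gamma)=L$. A compositum of two finite Galois extensions inside $\overline{\Q}$ is finite Galois, since it is the splitting field of the product of the defining polynomials. Here that product is $(X^N-g)(X^{2N}-1)$, giving the last assertion directly.

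The only mild subtlety is interpreting $g^{1/N}$ as a fixed choice of root. Every statement is independent of this choice, because any two choices differ by a power of $\zeta_N\in K$, so $L$ is independent of the choice. The field $E$ is also independent, as it is the splitting field of $X^N-g$.

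I expect no real obstacle. The step needing the most care is checking $\zeta_N\in E$ and $\zeta_N\in K$, so that $E$ really is the splitting field and $L=KE$ holds exactly.
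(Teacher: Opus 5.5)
Your proposal is correct and follows essentially the same route as the paper. Both arguments realize $K$ as a cyclotomic field and $E$ as the splitting field of $X^N-g$, obtain $L=KE$ from $\zeta_N=\zeta_{2N}^2\in K$, and identify the compositum with the splitting field of $(X^N-g)(X^{2N}-1)$. You also spell out details the paper leaves implicit: that the splitting field of $X^N-g$ is exactly $\Q(\zeta_N,\gamma)$, and that $L$ does not depend on the choice of $N$th root.
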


\begin{proof}
The field $K=\Q(\zeta_{2N})$ is cyclotomic, hence finite Galois over $\Q$.
The field $E=\Q(\zeta_N,g^{1/N})$ is the splitting field of $X^N-g$ over $\Q$, hence finite Galois over $\Q$.
Since $\zeta_N\in K$ and $g^{1/N}\in E$, we have $L=\Q(\zeta_{2N},g^{1/N})=KE$.
As a compositum of finite Galois extensions, $L/\Q$ is finite Galois.
Finally, the compositum of splitting fields is the splitting field of the product polynomial.
\end{proof}

\begin{lemma}[Kummer splitting criterion in residue fields]
\label{lem:kummer-splitting-general}
Let $N\ge 2$ and $g\in\Z\setminus\{0\}$, and put $K=\Q(\zeta_{2N})$ and $L=K(g^{1/N})$.
Let $q$ be a rational prime satisfying $q\nmid 2Ng$ and $q\equiv 1\pmod{2N}$.
Then $q$ splits completely in $L$ if and only if $g\in(\F_q^\times)^N$.
\end{lemma}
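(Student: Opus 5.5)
The plan is to reduce the splitting behaviour of $q$ in $L$ to the splitting of the polynomial $X^N-g$ modulo $q$, and then use the cyclic structure of $\F_q^\times$. The argument has three steps.

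\textbf{Step 1: $q$ splits completely in $K$ and is unramified in $L$.} Since $q\equiv 1\pmod{2N}$, the prime $q$ splits completely in $K=\Q(\zeta_{2N})$: Frobenius acts on $\zeta_{2N}$ by $\zeta_{2N}\mapsto\zeta_{2N}^q=\zeta_{2N}$. Moreover $q$ is unramified in $L$, because $L$ is the splitting field of $(X^N-g)(X^{2N}-1)$ (Lemma~\ref{lem:kummer-galois}). The discriminant of this polynomial is divisible only by primes dividing $2Ng$, and $q\nmid 2Ng$.

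\textbf{Step 2: reduce to a residue-field statement.} Since $L/\Q$ is Galois (Lemma~\ref{lem:kummer-galois}), $q$ splits completely in $L$ if and only if its Frobenius is trivial. This holds if and only if every residue field of $L$ above $q$ equals $\F_q$. Fix a prime $\mathfrak Q\mid q$ of $L$. Its residue field $\mathcal O_L/\mathfrak Q$ is generated over $\F_q$ by the reductions of $\zeta_{2N}$ and of the $N$th roots $\zeta_N^i g^{1/N}$. All of these are integral at $\mathfrak Q$, since $q$ is unramified and these elements are algebraic integers.

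\textbf{Step 3: compare with $N$th powers.} Because $q$ is unramified, reduction modulo $\mathfrak Q$ is injective on the $2N$th roots of unity and on the $N$ distinct roots of $X^N-g$. The latter holds since $X^N-\bar g$ is separable over $\F_q$, as $q\nmid Ng$. Consequently, the residue field equals $\F_q$ if and only if $X^N-\bar g$ splits into linear factors over $\F_q$; the roots of unity already lie in $\F_q$ because $2N\mid q-1$.

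\begin{itemize}
\item If $\bar g=c^N$ with $c\in\F_q^\times$, the roots are $c\,\bar\zeta$ for $\bar\zeta\in\mu_N(\F_q)$. There are $N$ of these because $N\mid q-1$, so $X^N-\bar g$ splits completely.
\item Conversely, complete splitting in particular gives a root in $\F_q$, so $\bar g\in(\F_q^\times)^N$.
\end{itemize}

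An alternative route is to note that $\mathcal O_L/\mathfrak Q$ is the splitting field of $(X^N-\bar g)(X^{2N}-1)$ over $\F_q$, since Frobenius at $\mathfrak Q$ generates the decomposition group and acts compatibly on the roots by reduction.

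I expect the main obstacle to be the justification that the residue field of $L$ at $\mathfrak Q$ is exactly the $\F_q$-field generated by the reductions of the roots. That is, one needs the identification that, for a monic integral polynomial whose discriminant is prime to $q$, complete splitting of $q$ in its splitting field is equivalent to the polynomial splitting into distinct linear factors modulo $q$. I would prove this by applying Dedekind–Kummer locally: the Frobenius $\sigma_{\mathfrak Q}$ satisfies $\sigma_{\mathfrak Q}(x)\equiv x^q\pmod{\mathfrak Q}$ on roots. Injectivity of reduction on the roots then shows that $\sigma_{\mathfrak Q}$ is trivial if and only if every reduced root is fixed by the $q$-power map, which means every reduced root lies in $\F_q$.
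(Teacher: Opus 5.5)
Your argument is correct in substance but organized differently from the paper's, and one justification in Step~1 is false (the conclusion it supports still holds).

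\textbf{Comparison of routes.} The paper works relative to $K$. Since $q$ already splits completely in $K=\Q(\zeta_{2N})$, it only has to decide whether $\Frob_{\mathfrak q}\in\Gal(L/K)$ is trivial. Because $L=K(\gamma)$ with $\gamma^N=g$, this is read off from the single congruence $\Frob_{\mathfrak q}(\gamma)\equiv\gamma^q\pmod{\mathfrak Q}$, i.e.\ from $\gamma^{q-1}=g^{(q-1)/N}\equiv 1$. That is Euler's criterion for $N$th powers in the cyclic group $\F_q^\times$.

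You instead work absolutely over $\Q$ with all roots of $(X^N-g)(X^{2N}-1)$. The congruence $\sigma_{\mathfrak Q}(x)\equiv x^q$, together with injectivity of reduction on each root set, shows that $\sigma_{\mathfrak Q}=1$ if and only if all reduced roots lie in $\F_q$. You finish by noting that $X^N-\bar g$ splits over $\F_q$ as soon as it has one root, because $\mu_N\subseteq\F_q^\times$.

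The paper's version is shorter and matches the power-residue-symbol description in the introduction. Yours is an instance of a general criterion: a prime not dividing the polynomial discriminants splits completely in the splitting field if and only if the polynomials split into distinct linear factors mod $q$. It needs neither Euler's criterion nor the Kummer structure of $L/K$.

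The proof you should write down is the Frobenius argument of your last paragraph. The Step~2 assertion that $\Ocal_L/\mathfrak Q$ is generated by the reduced roots is not automatic in general. Here it is a consequence of that argument, not an input to it.

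\textbf{The error in Step~1.} The discriminant of $(X^N-g)(X^{2N}-1)$ equals $\disc(X^N-g)\,\disc(X^{2N}-1)\,\Res(X^N-g,X^{2N}-1)^2$, and $\Res(X^N-g,X^{2N}-1)=(g^2-1)^N$. So this discriminant is divisible by every prime dividing $g^2-1$, and it is $0$ when $g=\pm1$. For instance, $N=2$, $g=6$, $q=5$ satisfies all the hypotheses, yet $5$ divides that discriminant.

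To get unramifiedness, argue instead that $L$ is the compositum of $\Q(\zeta_{2N})$ and the splitting field of $X^N-g$. Their defining polynomials have discriminants $\pm(2N)^{2N}$ and $\pm N^Ng^{N-1}$, both prime to $q$, so $q$ is unramified in each field and hence in $L$. The paper's relative setup never meets this issue, since over $K$ only $\disc(X^N-g)$ matters.

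Your Step~3 correctly treats the two root sets separately. Reduction need not be injective on their union when $q\mid g^2-1$, so nothing else needs changing.
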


\begin{proof}
Since $q\equiv 1\pmod{2N}$ and $q\nmid 2N$, the prime $q$ splits completely in $K=\Q(\zeta_{2N})$.
Thus for any prime $\mathfrak q\mid q$ of $\Ocal_K$ we have $\Ocal_K/\mathfrak q\simeq\F_q$.
Moreover, $\mu_N\subseteq K$ (because $\zeta_N=\zeta_{2N}^2\in K$), so $L/K$ is a (possibly proper) Kummer extension.

Let $\gamma:=g^{1/N}\in L$, so $\gamma^N=g\in\Z$.  Since $q\nmid Ng$, the prime $\mathfrak q$ is unramified in $L/K$.
Fix a prime $\mathfrak Q\mid\mathfrak q$ in $L$ and let $\Frob_{\mathfrak q}\in\Gal(L/K)$ be a Frobenius element.
For the algebraic integer $\gamma$ we have the standard congruence
\[
\Frob_{\mathfrak q}(\gamma)\equiv \gamma^{q}\pmod{\mathfrak Q}.
\]
Thus $\Frob_{\mathfrak q}=1$ if and only if $\gamma^{q-1}\equiv 1\pmod{\mathfrak Q}$.
Because $q\equiv 1\pmod{2N}$ we have $N\mid(q-1)$, so
\[
\gamma^{q-1}=\gamma^{N\cdot\frac{q-1}{N}}=(\gamma^N)^{\frac{q-1}{N}}=g^{\frac{q-1}{N}}.
\]
Reducing $g$ modulo $\mathfrak q$ (equivalently modulo $q$) gives an element of $\F_q^\times$.
In the cyclic group $\F_q^\times$ of order $q-1$ with $N\mid(q-1)$, the congruence
$g^{(q-1)/N}\equiv 1$ is equivalent to $g\in(\F_q^\times)^N$.
Hence $\Frob_{\mathfrak q}=1$ if and only if $g\in(\F_q^\times)^N$.

Finally, since $q$ splits completely in $K$, the prime $q$ splits completely in $L$ if and only if
$\mathfrak q$ splits completely in $L/K$, i.e.\ if and only if $\Frob_{\mathfrak q}=1$.
\end{proof}

% --- Chebotarev, in the form used for “splitting conditions give positive-density prime sets” ---
We recall the Chebotarev density theorem in \cite[§3]{SerreChebotarev}.
\begin{theorem}
Let $E/F$ be a finite Galois extension of number fields, with Galois group
$G=\mathrm{Gal}(E/F)$.  Let $C\subseteq G$ be a conjugacy-stable subset
(equivalently, a union of conjugacy classes).  For each nonzero prime ideal
$\mathfrak p\subset\mathcal O_F$ unramified in $E$, write
$\mathrm{Frob}_{\mathfrak p}(E/F)\subseteq G$ for its Frobenius conjugacy class.
Define
\[
\pi_C(x;E/F)
:=\#\Bigl\{\mathfrak p \subset \mathcal O_F :
\mathfrak p \text{ unramified in }E,\;
N\mathfrak p\le x,\;
\mathrm{Frob}_{\mathfrak p}(E/F)\subseteq C
\Bigr\}.
\]
Then, as $x\to\infty$,
\[
\pi_C(x;E/F)\sim \frac{|C|}{|G|}\,\mathrm{Li}(x),
\qquad\text{where }\mathrm{Li}(x):=\int_2^x \frac{dt}{\log t}.
\]
Equivalently, the set of (unramified) prime ideals $\mathfrak p$ of $F$ with
$\mathrm{Frob}_{\mathfrak p}(E/F)\subseteq C$ has natural density $|C|/|G|$.
\end{theorem}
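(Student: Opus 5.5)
This is the Chebotarev density theorem as recalled from \cite[\S3]{SerreChebotarev}, so the paper will cite it rather than prove it. If one wanted an argument, I would follow the classical analytic route via Artin $L$-functions, reducing to the prime ideal theorem for Hecke characters.

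\textbf{Step 1: decompose the indicator of $C$.}
Since $C$ is conjugacy-stable, its indicator $1_C$ is a class function on $G$. Expanding it in irreducible characters gives
\[
1_C=\sum_\chi \langle 1_C,\chi\rangle\,\chi .
\]
The coefficient of the trivial character is $|C|/|G|$. It therefore suffices to prove, for each irreducible $\chi$,
\[
\sum_{N\mathfrak p\le x}\chi(\Frob_{\mathfrak p})=\delta_{\chi=1}\,\Li(x)+o(\Li(x)).
\]
Here ramified primes and prime powers contribute only $O(\sqrt x)$, so they can be ignored.

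\textbf{Step 2: reduce to abelian characters.}
Brauer's induction theorem writes $\chi$ as a $\Z$-linear combination of characters induced from one-dimensional characters $\psi$ of subgroups $H\le G$. Induction is compatible with Frobenius sums: the prime sum for $\mathrm{Ind}_H^G\psi$ over $F$ equals the prime sum for $\psi$ over the fixed field $E^H$, up to primes of degree $\ge2$, which are negligible. By class field theory each such $\psi$ is a Hecke character of finite order.

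\textbf{Step 3: the abelian case.}
For a Hecke $L$-function $L(s,\psi)$ one needs meromorphic continuation to $\Re s\ge1$ and non-vanishing on $\Re s=1$. The only possible pole is at $s=1$, and it occurs exactly when $\psi$ is trivial. Non-vanishing comes from the usual $3+4\cos\theta+\cos2\theta\ge0$ argument, or from factoring $\zeta_{E'}$ over the abelian extension. A Wiener--Ikehara or contour-integration argument then gives $\sum_{N\mathfrak p\le x}\psi(\mathfrak p)=\delta_{\psi=1}\Li(x)+o(\Li(x))$.

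\textbf{Step 4: conclude.}
Summing the contributions with the Brauer coefficients recovers the trivial-character term, so $\pi_C(x;E/F)\sim\frac{|C|}{|G|}\Li(x)$.

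The main obstacle is non-vanishing on $\Re s=1$ for non-abelian $\chi$, which Step 2 circumvents. Alternatively, Deuring's reduction to cyclic subextensions $E/E^{\langle\sigma\rangle}$ gives the counting result directly, without Brauer's theorem.
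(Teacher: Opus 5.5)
You are right that the paper gives no proof of this statement: it quotes the Chebotarev density theorem from Serre and uses it as a black box, so citing it is all that is required here. Your optional sketch is the standard argument and is sound. It consists of the character expansion of $1_C$; Brauer induction together with the fact that $\mathrm{Ind}_H^G\psi(\mathrm{Frob}_{\mathfrak p})$ is the sum of $\psi(\mathrm{Frob}_{\mathfrak P})$ over the degree-one primes $\mathfrak P$ of $E^H$ above $\mathfrak p$; non-vanishing of Hecke $L$-functions on $\Re s=1$; and the cancellation in Step 4, which is Frobenius reciprocity, $\sum_i n_i\langle\psi_i,1_{H_i}\rangle=\langle\chi,1_G\rangle=0$ for nontrivial $\chi$. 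The one detail worth stating explicitly is that classical Wiener--Ikehara needs nonnegative coefficients, so for complex $\psi$ you should apply it to $-\zeta_F'/\zeta_F$ plus the real and imaginary parts of $-L'/L(s,\psi)$.
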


The theorem yields the following important consequence.
\begin{corollary}
Let $E/F$ be as above.  The set of prime ideals of $F$ splitting completely in $E$
(i.e. $\mathrm{Frob}_{\mathfrak p}(E/F)=\{1\}$) has density $1/|G|=1/[E:F]$.

In particular, if $K\subsetneq L$ are finite Galois extensions of $\mathbb Q$, then
the set of rational primes $p$ that split completely in $K$ but do \emph{not} split
completely in $L$ has density
\[
\frac{1}{[K:\mathbb Q]}-\frac{1}{[L:\mathbb Q]}\;>\;0.
\]
\end{corollary}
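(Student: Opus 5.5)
The first claim is the Chebotarev density theorem applied with $C=\{1\}$. The second follows by comparing densities in the tower $\Q\subseteq K\subsetneq L$.

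\emph{First claim.} Take $C=\{1\}\subseteq G$. The identity forms a conjugacy class by itself, so $C$ is conjugacy-stable. For a prime $\mathfrak p$ of $F$ unramified in $E$, the following are equivalent: $\mathfrak p$ splits completely in $E$; the decomposition group is trivial; $\Frob_{\mathfrak p}(E/F)=\{1\}$. Only finitely many primes ramify, so they do not affect densities. The Chebotarev density theorem then gives density $|C|/|G|=1/|G|=1/[E:F]$.

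\emph{Second claim.} Let $S_K$ be the set of rational primes splitting completely in $K$, and define $S_L$ in the same way. Apply the first part with $F=\Q$, once with $E=K$ and once with $E=L$. This shows $S_K$ has density $1/[K:\Q]$ and $S_L$ has density $1/[L:\Q]$.

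Next, check that $S_L\subseteq S_K$. In terms of Frobenius, $\Frob_p(L/\Q)=1$ implies that its image under the restriction map $\Gal(L/\Q)\to\Gal(K/\Q)$ is trivial. That image is $\Frob_p(K/\Q)$, so $p$ splits completely in $K$.

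Since $S_L\subseteq S_K$, the set $S_K\setminus S_L$ has counting function $\pi_{S_K}(x)-\pi_{S_L}(x)$. Its density is therefore $1/[K:\Q]-1/[L:\Q]$.

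This is strictly positive: $K\subsetneq L$ gives $[L:\Q]=[L:K][K:\Q]$ with $[L:K]\ge2$, so $1/[L:\Q]<1/[K:\Q]$.

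Nothing here is a genuine obstacle. The only points needing care are the inclusion $S_L\subseteq S_K$, which comes from compatibility of Frobenius with restriction, and the fact that the finitely many ramified primes do not change densities.
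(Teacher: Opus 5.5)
Your proposal is correct and matches the paper's route. The corollary has no separate proof in the paper, but the argument written out in the proof of Proposition~\ref{prop:Pg-positive-density-general} is exactly yours: Chebotarev with $C=\{1\}$, then subtracting densities using that complete splitting in $L$ implies complete splitting in $K$. The paper also uses an equivalent single application of Chebotarev in the proof of Proposition~\ref{prop:logpower-fixed-g}, to the conjugacy-stable set $\Gal(L/K)\setminus\{1\}\subseteq\Gal(L/\Q)$, which gives $(|H|-1)/|G|=1/[K:\Q]-1/[L:\Q]$ directly.
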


We define Chebotarev--Kummer obstructing primes as follows.

\begin{definition}
\label{def:Pg-general}
Let $N\ge 2$ and $g\in\Z\setminus\{0\}$.
Assume $L=K(g^{1/N})\neq K$ for $K=\Q(\zeta_{2N})$.
Define
\[
P_g
:=\Bigl\{\,q\ \text{prime}:\ q\nmid 2Ng,\ q\equiv 1\!\!\!\pmod{2N},\ \text{and } g\notin(\F_q^\times)^N \Bigr\}.
\]
\end{definition}

Applying Chebotarev's theorem to obstructing primes yields the following.
\begin{proposition}
\label{prop:Pg-positive-density-general}
Assume $L\neq K$.
Then $P_g$ has (natural) density
\[
\delta_g=\frac{1}{[K:\Q]}-\frac{1}{[L:\Q]}>0
\]
among rational primes.  In particular, $P_g$ is infinite and
\[
\sum_{q\in P_g}\frac{1}{q}=\infty.
\]
\end{proposition}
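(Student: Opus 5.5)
The plan is to identify $P_g$, up to finitely many primes, with a Chebotarev set in the Galois extension $L/\Q$, and then apply the Chebotarev density theorem and its corollary as recalled above.

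First I will show that, among primes $q\nmid 2Ng$, the condition $q\in P_g$ is equivalent to: $q$ splits completely in $K=\Q(\zeta_{2N})$ but not in $L=K(g^{1/N})$. For such $q$, complete splitting in the cyclotomic field $K$ is equivalent to $q\equiv 1\pmod{2N}$, by the standard description of Frobenius in cyclotomic extensions: $\Frob_q$ acts as $\zeta\mapsto\zeta^q$. Given $q\equiv1\pmod{2N}$, Lemma~\ref{lem:kummer-splitting-general} shows that $q$ splits completely in $L$ exactly when $g\in(\F_q^\times)^N$. Hence
\[
P_g=\{q\nmid 2Ng:\ q \text{ splits completely in } K \text{ but not in } L\}.
\]
The primes dividing $2Ng$ are finitely many and do not affect densities. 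Since $L/\Q$ is Galois by Lemma~\ref{lem:kummer-galois}, this set is the set of unramified $q$ whose Frobenius class lies in $C=\Gal(L/K)\setminus\{1\}$. Because $K/\Q$ is Galois, $\Gal(L/K)$ is normal in $\Gal(L/\Q)$, so $C$ is conjugacy-stable.

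Next, Chebotarev gives natural density
\[
\frac{|C|}{[L:\Q]}=\frac{[L:K]-1}{[L:\Q]}=\frac1{[K:\Q]}-\frac1{[L:\Q]}.
\]
This is the same value as in the corollary. Since $L\ne K$, we have $[L:K]\ge2$, so $\delta_g>0$, and in particular $P_g$ is infinite.

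For the divergence of $\sum_{q\in P_g} 1/q$, I will use the asymptotic $\pi_{P_g}(x)\sim\delta_g\,\Li(x)\gg x/\log x$. Partial summation then gives
\[
\sum_{q\le x,\,q\in P_g}\frac1q=\int_2^x\frac{d\pi_{P_g}(t)}{t}\ge \int_2^x\frac{\pi_{P_g}(t)}{t^2}\,dt+O(1)\gg \log\log x\to\infty.
\]
An alternative is a dyadic-block argument: for large $k$ there are at least $c\,2^k/k$ primes of $P_g$ in $(2^k,2^{k+1}]$, each contributing at least $2^{-k-1}$, so the sum is $\gg\sum_k 1/k=\infty$.

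The only step requiring care is the identification of the Chebotarev condition: checking that complete splitting in $K$ matches $q\equiv 1 \pmod{2N}$ exactly, and that the Frobenius-class description is conjugation-invariant. Both are standard, so I do not expect a genuine obstacle.
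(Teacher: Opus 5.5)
Your proposal is correct and follows essentially the same route as the paper. You identify $P_g$, away from primes dividing $2Ng$, with the primes that split completely in $K$ but not in $L$ via Lemma~\ref{lem:kummer-splitting-general}, apply Chebotarev, and get divergence by partial summation. The paper obtains $\delta_g$ by subtracting the complete-splitting densities $1/[K:\Q]-1/[L:\Q]$ rather than applying Chebotarev to the conjugacy-stable set $\Gal(L/K)\setminus\{1\}$ as you do (the paper itself uses your formulation later, in Proposition~\ref{prop:logpower-fixed-g}); and your bound $\gg\log\log x$ is all that the bare asymptotic $\pi_{P_g}(x)\sim\delta_g\Li(x)$ actually justifies.
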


\begin{proof}
By Lemma~\ref{lem:kummer-galois}, both $K/\Q$ and $L/\Q$ are finite Galois extensions with $K\subsetneq L$.
Chebotarev's density theorem implies that the set of rational primes splitting completely in a finite Galois
extension $F/\Q$ has density $1/[F:\Q]$.
Therefore, the set of primes splitting completely in $K$ but \emph{not} splitting completely in $L$ has density
\[
\frac{1}{[K:\Q]}-\frac{1}{[L:\Q]}>0.
\]
Discarding the finite set of primes dividing $2Ng$ does not change density.

If $q\nmid 2Ng$ splits completely in $K$, then $q\equiv 1\pmod{2N}$; moreover, by
Lemma~\ref{lem:kummer-splitting-general}, such a prime splits completely in $L$ if and only if
$g\in(\F_q^\times)^N$. Hence, up to finitely many exceptions, the condition
split completely in $K$ but not in $L$ is equivalent to $q\in P_g$.
This proves that $P_g$ has density $\delta_g$.

Finally, Chebotarev's theorem yields
$\#\{q\le X: q\in P_g\}\sim \delta_g\,X/\log X$, and partial summation gives
$\sum_{q\le X,\ q\in P_g}1/q=\delta_g\log\log X+O(1)$, implying $\sum_{q\in P_g}1/q=\infty$.
\end{proof}

\begin{corollary}[Sign-killing in $\F_q^\times$ and $\Z_q^\times$]
\label{cor:sign-killed}
Let $N\ge 1$ and let $q$ be a prime with $q\equiv 1\pmod{2N}$. Then
\[
-1\in(\F_q^\times)^N
\qquad\text{and}\qquad
-1\in(\Z_q^\times)^N.
\]
In particular,
\[
\pm(\F_q^\times)^N=(\F_q^\times)^N
\qquad\text{and}\qquad
\pm(\Z_q^\times)^N=(\Z_q^\times)^N.
\]
\end{corollary}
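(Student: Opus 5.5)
The plan is to use the cyclic structure of $\F_q^\times$ and then lift to $\Z_q^\times$ with Lemma~\ref{lem:henselN}. Since $q\equiv 1\pmod{2N}$, $q$ is odd, so $\F_q^\times$ is cyclic of even order $q-1$. Fix a generator $\gamma$. Then $-1=\gamma^{(q-1)/2}$ is the unique element of order $2$. In a cyclic group of order $q-1$ with $N\mid q-1$, an element $x$ lies in $(\F_q^\times)^N$ if and only if $x^{(q-1)/N}=1$.

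Apply this test to $x=-1$. Write $(q-1)/N=2\cdot\frac{q-1}{2N}$, which is an even integer because $2N\mid q-1$. Hence $(-1)^{(q-1)/N}=1$, and so $-1\in(\F_q^\times)^N$. Equivalently and more concretely, $\gamma^{(q-1)/(2N)}$ is an element whose $N$th power is $\gamma^{(q-1)/2}=-1$. The degenerate case $N=1$ is trivial.

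For $\Z_q$, note first that $q\nmid N$, since $q\equiv1\pmod{2N}$ forces $q>2N>N$. Lemma~\ref{lem:henselN}, applied to the complete DVR $\Z_q$ with residue field $\F_q$, then upgrades $\overline{-1}\in(\F_q^\times)^N$ to $-1\in(\Z_q^\times)^N$. Alternatively, the $2N$th roots of unity lie in $\Z_q$ because $2N\mid q-1$, and a primitive $2N$th root $\zeta$ satisfies $\zeta^N=-1$.

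Finally, $(\F_q^\times)^N$ is a subgroup containing $-1$, so $-(\F_q^\times)^N=(\F_q^\times)^N$, and hence $\pm(\F_q^\times)^N=(\F_q^\times)^N$. The same argument gives $\pm(\Z_q^\times)^N=(\Z_q^\times)^N$. None of these steps is a real obstacle. The only point needing care is the parity of $(q-1)/N$, and that is exactly where the hypothesis $2N\mid q-1$, rather than merely $N\mid q-1$, is used.
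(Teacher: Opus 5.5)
Your proposal is correct and essentially matches the paper's proof. The paper also uses an element of order $2N$ in the cyclic group $\F_q^\times$ (your $\gamma^{(q-1)/(2N)}$), whose $N$th power is $-1$, and then lifts to $\Z_q^\times$ via Lemma~\ref{lem:henselN} using $q>2N\ge N$, so $q\nmid N$.
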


\begin{proof}
Since $q\equiv 1\pmod{2N}$, the cyclic group $\F_q^\times$ has an element of order $2N$; its $N$th power is $-1$,
so $-1\in(\F_q^\times)^N$.
Also $q\equiv 1\pmod{2N}$ implies $q>2N\ge N$, hence $q\nmid N$.
By Lemma~\ref{lem:henselN} (Hensel criterion for $N$th powers), the inclusion
$-1\in(\F_q^\times)^N$ lifts to $-1\in(\Z_q^\times)^N$.
\end{proof}

We recall the two-sided natural density.
\begin{definition}
\label{def:two-sided-density}
For a set $A\subseteq \Z$, its \emph{two-sided natural density} (if it exists) is
\[
\delta(A):=\lim_{X\to\infty}\frac{\#\{m\in A:\ |m|\le X\}}{2X}.
\]
\end{definition}

\begin{lemma}[Avoiding a divergent set of primes has density zero]
\label{lem:avoid-divergent-P}
Let $P$ be a set of rational primes such that $\sum_{p\in P}1/p=\infty$.
Let $A_P\subseteq \Z$ be the set of \emph{$P$-free} integers (i.e.\ $p\nmid m$ for all $p\in P$).
Then $\delta(A_P)=0$.
\end{lemma}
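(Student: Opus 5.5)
The plan is to bound the counting function of $A_P$ by an Euler product and show that the product tends to $0$. Let $\epsilon>0$. Since $\sum_{p\in P}1/p=\infty$ and $\prod(1-1/p)\le\exp(-\sum 1/p)$, I would pick a \emph{finite} subset $P_0\subseteq P$ with
\[
\prod_{p\in P_0}\Bigl(1-\frac1p\Bigr)<\epsilon .
\]
Put $Q:=\prod_{p\in P_0}p$. Every $m\in A_P$ is coprime to each $p\in P_0$, so $A_P\subseteq A_{P_0}$. It therefore suffices to bound the upper density of $A_{P_0}$.

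The set $A_{P_0}$ is a union of residue classes modulo $Q$: exactly those $r\bmod Q$ with $\gcd(r,Q)=1$. There are $\varphi(Q)=Q\prod_{p\in P_0}(1-1/p)$ such classes, by CRT. The interval $[-X,X]$ is covered by $\lfloor 2X/Q\rfloor+1$ blocks of $Q$ consecutive integers. Each block contains exactly $\varphi(Q)$ elements of $A_{P_0}$, so
\[
\#\{m\in A_P:\ |m|\le X\}\le \Bigl(\frac{2X}{Q}+1\Bigr)\varphi(Q)=2X\prod_{p\in P_0}\Bigl(1-\frac1p\Bigr)+\varphi(Q).
\]
Dividing by $2X$ and letting $X\to\infty$ gives $\limsup_{X\to\infty}\#\{m\in A_P:|m|\le X\}/(2X)\le\epsilon$.

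Since $\epsilon$ was arbitrary, the upper density is $0$. The lower density is nonnegative, so the limit exists and equals $0$. The only step needing care is the choice of $P_0$, which uses $\log(1-1/p)\le -1/p$ together with the divergence of $\sum_{p\in P}1/p$. No sieve with growing level is needed, because the statement is only qualitative.
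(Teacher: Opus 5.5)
Your proposal is correct and follows essentially the same route as the paper. The paper restricts to the finite truncation $P(Y)=\{p\in P:\ p\le Y\}$ where you use an arbitrary finite $P_0\subseteq P$, bounds the upper density of $A_P$ by the CRT density $\prod_{p\in P(Y)}(1-1/p)$, and sends this product to $0$ using $\log(1-x)\le -x$ and the divergence of $\sum_{p\in P}1/p$; your block count modulo $Q$ simply spells out the CRT density step that the paper asserts directly.
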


\begin{proof}
For $Y\ge 2$, let $P(Y)=\{p\in P:\ p\le Y\}$.
By inclusion--exclusion (or CRT), the set of integers divisible by no prime in $P(Y)$ has two-sided density
\[
\prod_{p\in P(Y)}\Bigl(1-\frac{1}{p}\Bigr).
\]
Hence the upper two-sided density of $A_P$ satisfies
\[
\overline{\delta}(A_P)\le \prod_{p\in P(Y)}\Bigl(1-\frac{1}{p}\Bigr)\qquad(Y\ge 2).
\]
Using $\log(1-x)\le -x$ for $0<x<1$ gives
\[
\log\prod_{p\in P(Y)}\Bigl(1-\frac{1}{p}\Bigr)\le -\sum_{p\in P(Y)}\frac{1}{p}\xrightarrow[Y\to\infty]{}-\infty,
\]
so the product tends to $0$ and therefore $\delta(A_P)=0$.
\end{proof}

\begin{remark}[Thin parameter sets]
Lemma~\ref{lem:avoid-divergent-P} concerns density in $\Z$ (and hence in any positive-density subfamily of $\Z$,
such as square-free integers). It does not automatically imply analogous statements in very thin parameter sets
(e.g.\ the primes), where a separate sieve input is required.
\end{remark}

%==========================================================
\subsection{Nontriviality of the Kummer extension for pure-field index values}
\label{sec:kummer-nontrivial-index}
%==========================================================

For the sieve arguments below we require that $L\neq K$ when $g=g(m)\ge 2$
arises as a pure power-basis index. It is noticed that Eisenstein primes do not divide the pure power-basis index.

\begin{lemma}
\label{lem:eisenstein-prime-does-not-divide-g}
Fix $n\ge 2$ and let $m\in\Z$ be square-free such that $X^n-m$ is irreducible over $\Q$.
Let $\alpha^n=m$, set $K_m=\Q(\alpha)$, and write
\[
g(m):=[\OO_{K_m}:\Z[\alpha]]\in\Z_{\ge 1}.
\]
If $q$ is a prime with $q\mid m$, then $X^n-m$ is Eisenstein at $q$, so
\[
\OO_{K_m}\otimes_\Z \Z_q \;=\; \Z_q[\alpha].
\]
Consequently,
\[
v_q\bigl(g(m)\bigr)=0
\quad\text{(equivalently, $q\nmid g(m)$),}
\]
so in particular $g(m)\in\Z_q^\times$ and its reduction $\overline{g(m)}\in\F_q^\times$ is well-defined.
\end{lemma}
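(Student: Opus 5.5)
The plan is to use the standard Eisenstein criterion for local maximality of a monogenic order. First, since $m$ is square-free and $q\mid m$, we have $v_q(m)=1$. The coefficients of $X^n-m$ other than the leading one are $0$ and $-m$, so all are divisible by $q$ and the constant term is not divisible by $q^2$. Hence $X^n-m$ is Eisenstein at $q$, and it is irreducible over $\Q_q$.

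Next we show that $\Z_q[\alpha]$ is a DVR. Put $A:=\Z_q[\alpha]\cong\Z_q[X]/(X^n-m)$, which is finite free of rank $n$. Modulo $q$ this becomes $\F_q[X]/(X^n)$, a local ring with maximal ideal $(\bar\alpha)$. Therefore $A$ is local with maximal ideal $\mathfrak m=(q,\alpha)$. The relation $q\cdot(m/q)=\alpha^n$, in which $m/q$ is a unit of $\Z_q$, shows that $q\in(\alpha)$, so $\mathfrak m=(\alpha)$ is principal. A noetherian local domain of dimension one with principal maximal ideal is a DVR. Here $A$ is a domain because $X^n-m$ is irreducible over $\Q_q$, and it has dimension one because it is finite over $\Z_q$. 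So $A$ is a DVR and in particular integrally closed. Alternatively, one can cite the classical fact that an Eisenstein generator gives a totally ramified extension with $\Z_q[\alpha]$ maximal.

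Then we compare with $\OO_{K_m}\otimes\Z_q$. Because $X^n-m$ is irreducible over $\Q_q$, the prime $q$ has exactly one prime above it and $K_m\otimes\Q_q$ is a field. The ring $\OO_{K_m}\otimes\Z_q$ is the integral closure of $\Z_q$ in this field and contains $A$. Since $A$ is integrally closed with the same fraction field $K_m\otimes\Q_q$, the two rings are equal.

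Finally, localization is exact, so $[\OO_{K_m}:\Z[\alpha]]\otimes\Z_q$ computes the $q$-part of the index: $q^{v_q(g(m))}=[\OO_{K_m}\otimes\Z_q:\Z_q[\alpha]]=1$. Hence $q\nmid g(m)$, which makes $g(m)$ a unit in $\Z_q$ and its reduction modulo $q$ well-defined in $\F_q^\times$.

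The step needing the most care is the DVR and maximality argument in the second and third steps. In particular one must check that $\mathfrak m=(\alpha)$ uses $v_q(m)=1$, and that the identification of $\OO_{K_m}\otimes\Z_q$ with the integral closure in the completion holds.
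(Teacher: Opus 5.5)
Your proposal is correct and follows the paper's proof. You use $v_q(m)=1$ to get the Eisenstein property, conclude that $\Z_q[\alpha]$ is a DVR and hence equals the integral closure $\OO_{K_m}\otimes\Z_q$, and localize the index to get $v_q(g(m))=0$. The only difference is that you verify the DVR property explicitly (a local domain with principal maximal ideal $(\alpha)$, using $q\in(\alpha)$), where the paper cites it as a consequence of the Eisenstein condition.
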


\begin{proof}
Since $m$ is square-free and $q\mid m$, we have $v_q(m)=1$, and thus $X^n-m$ is Eisenstein at $q$.
Hence $\Z_q[\alpha]$ is a DVR and is integrally closed; it is therefore the full ring of integers of
$K_m\otimes_\Q \Q_q$. On the other hand, $\OO_{K_m}\otimes_\Z\Z_q$ is by definition the integral closure
of $\Z_q$ in $K_m\otimes_\Q\Q_q$. Thus $\OO_{K_m}\otimes_\Z\Z_q=\Z_q[\alpha]$.

Now localizing the index $g(m)=[\OO_{K_m}:\Z[\alpha]]$ at $q$ gives
\[
\bigl(\OO_{K_m}\otimes\Z_q : \Z_q[\alpha]\bigr)=q^{v_q(g(m))}.
\]
Since the two $\Z_q$-orders coincide, this local index is $1$, so $v_q(g(m))=0$ as claimed.
\end{proof}

Consequently, we can show the finiteness and exponent bounds for the pure power-basis index.
\begin{corollary}
\label{cor:finite-g}
Fix $n\ge 2$ and let $m$ be square-free such that $X^n-m$ is irreducible over $\Q$.
Let $\alpha^n=m$, put $K_m=\Q(\alpha)$, and write
\[
g(m):=[\Ocal_{K_m}:\Z[\alpha]].
\]
Then:
\begin{enumerate}[label=\textup{(\roman*)},leftmargin=*]
\item $\gcd(g(m),m)=1$.
\item One has the divisibility
\[
g(m)^2 \mid n^n.
\]
\item In particular, if $p\mid g(m)$ then $p\mid n$, and for each prime $p\mid n$,
\[
v_p(g(m))\le \frac{n}{2}\,v_p(n).
\]
Consequently, the set $\{g(m): m\ \text{square-free and }X^n-m\text{ irreducible}\}$ is finite.
\end{enumerate}
\end{corollary}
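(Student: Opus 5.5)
Part (i) is immediate from Lemma~\ref{lem:eisenstein-prime-does-not-divide-g}: every prime $q\mid m$ satisfies $v_q(g(m))=0$, so no prime divides both $g(m)$ and $m$. For (ii) I would use the discriminant identity
\[
\disc(X^n-m)\;=\;[\Ocal_{K_m}:\Z[\alpha]]^2\cdot d_{K_m}\;=\;g(m)^2\, d_{K_m},
\]
which is the standard lattice-index relation. It can be justified with Lemma~\ref{lem:index-equals-indexform}: the Gram matrix of the trace form on $(1,\alpha,\dots,\alpha^{n-1})$ equals $T^{t}\,(\text{Gram matrix on } e_1,\dots,e_n)\,T$. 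Computing directly, $\disc(X^n-m)=\pm n^n m^{n-1}$, since the resultant of $X^n-m$ with $nX^{n-1}$ is $\pm n^n m^{n-1}$. Hence $g(m)^2\mid n^n m^{n-1}$, and since $\gcd(g(m),m)=1$ by (i), we get $g(m)^2\mid n^n$.

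For (iii), comparing $p$-adic valuations in $g(m)^2\mid n^n$ gives $2v_p(g(m))\le n\,v_p(n)$, and in particular $v_p(g(m))=0$ for $p\nmid n$. Thus $g(m)$ is a divisor of the fixed integer $\prod_{p\mid n}p^{\lfloor n v_p(n)/2\rfloor}$, which has only finitely many divisors, so the set of index values is finite.

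The only step that needs real care is the discriminant--index identity, together with the sign and constant bookkeeping in $\disc(X^n-m)$. Neither affects divisibility, since only absolute values matter and $d_{K_m}\in\Z$. I would cite the identity as standard (e.g.\ \cite{NeukirchANT}) rather than reprove it, noting that the integrality of $d_{K_m}$ is what makes $g(m)^2$ a divisor of $\disc(X^n-m)$.
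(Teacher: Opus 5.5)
Your proposal is correct and follows the paper's proof essentially step for step. You use the discriminant--index identity with $\disc(X^n-m)=\pm n^n m^{n-1}$ to get $g(m)^2\mid n^n m^{n-1}$, remove the $m$-part using the Eisenstein primes $q\mid m$, and then read off the valuation bound and finiteness.
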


\begin{proof}
The discriminant of the order $\Z[\alpha]$ equals the polynomial discriminant:
\[
\disc(\Z[\alpha])=\disc(X^n-m)=(-1)^{\frac{n(n-1)}2}\,n^n\,m^{n-1}.
\]
The discriminant--index identity gives
\[
\disc(\Z[\alpha]) \;=\; g(m)^2\,\disc(\Ocal_{K_m}).
\]
In particular, $g(m)^2\mid \disc(\Z[\alpha])$, so
\[
g(m)^2 \mid n^n\,m^{n-1}.
\]

Now let $q\mid m$. Since $m$ is square-free, $v_q(m)=1$, hence $X^n-m$ is Eisenstein at $q$.
Therefore $\Ocal_{K_m}\otimes\Z_q=\Z_q[\alpha]$, so the $q$-part of the index is trivial and $q\nmid g(m)$.
As this holds for every prime $q\mid m$, we get $\gcd(g(m),m)=1$.

Combining $\gcd(g(m),m)=1$ with $g(m)^2\mid n^n m^{n-1}$ yields $g(m)^2\mid n^n$, proving (ii).
The valuation bound (iii) follows from $2v_p(g(m))\le v_p(n^n)=n\,v_p(n)$.
Finally, (ii)--(iii) imply $g(m)$ ranges over a finite set.
\end{proof}

\begin{lemma}[A growth inequality for $v_p(n)$]
\label{lem:vp-ineq}
Let $n\ge 5$ and let $p$ be a prime divisor of $n$. Then
\[
v_p(n)<\frac{n-1}{2}.
\]
\end{lemma}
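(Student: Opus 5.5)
We must show $v_p(n)<(n-1)/2$ for $n\ge5$ and $p\mid n$. Write $k=v_p(n)\ge1$, so that $p^k\mid n$ and in particular $n\ge p^k\ge 2^k$. It therefore suffices to prove $k<(n-1)/2$, i.e.\ $n>2k+1$.

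The first step is the case $k\ge 3$. Here $2^k\ge 2k+2>2k+1$. This is immediate by induction: the case $k=3$ reads $8\ge 8$, and passing from $k$ to $k+1$ doubles the left side while adding only $2$ to the right. Hence $n\ge 2^k>2k+1$.

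The remaining cases $k=1$ and $k=2$ are where the hypothesis $n\ge5$ enters.
\begin{itemize}[leftmargin=*]
\item If $k=1$, we need $1<(n-1)/2$, i.e.\ $n>3$. This holds since $n\ge5$.
\item If $k=2$, we need $n>5$. Since $p^2\mid n$, we have $n\ge 4$. Moreover $n=5$ is impossible, because $5$ is not divisible by any square $p^2>1$. Hence $n\ge 5$ together with $p^2\mid n$ forces $n\ge 8$ when $p=2$, and $n\ge 9$ otherwise. In either case $n>5$.
\end{itemize}

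Combining the three cases gives $v_p(n)<(n-1)/2$ for all $n\ge5$ and all primes $p\mid n$.

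The only point needing care is the boundary: $n=4$, $p=2$, $k=2$ gives $2=2<3/2$ false, which explains the hypothesis $n\ge5$. More generally, the small cases where $2^k$ is close to $2k+1$ must be checked by hand rather than by the crude bound $n\ge 2^k$.
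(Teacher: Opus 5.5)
Your proof is correct, but it splits the cases differently from the paper. The paper splits on the size of $n$. For $n\in\{5,6\}$ it notes $v_p(n)=1$. For $n\ge 7$ it uses $v_p(n)\le\log_2 n$ together with $2^{(n-1)/2}>n$, which it proves by checking $n=7,8$ and inducting in steps $n\to n+2$.

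You instead split on $k=v_p(n)$. For $k\ge 3$ the integer inequality $2^k\ge 2k+2$ settles everything. For $k=1,2$ you argue directly from $n\ge 5$, using $p^2\mid n$ to exclude $n=5$.

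The core comparison is the same in both: $n\ge 2^k$ against the linear threshold $2k+1$. Your version stays in the integers and avoids half-integer exponents such as $2^{7/2}$. The paper's version isolates the explicit intermediate bound $v_p(n)\le\log_2 n<(n-1)/2$ for all $n\ge 7$.
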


\begin{proof}
If $n\in\{5,6\}$ then $v_p(n)=1<\frac{n-1}{2}$ for all $p\mid n$.
Assume $n\ge 7$. Then $v_p(n)\le \log_2(n)$ for all primes $p\mid n$.
We claim $\log_2(n)<\frac{n-1}{2}$ for $n\ge 7$.
Indeed, $2^{(n-1)/2}>n$ holds for $n=7,8$ by inspection, and if it holds for some $n\ge 7$ then
\[
2^{(n+1)/2}=2\cdot 2^{(n-1)/2}>2n\ge n+2,
\]
so it holds inductively for all larger $n$. Hence $\log_2(n)<(n-1)/2$.
\end{proof}

We confirm the nontriviality of the Kummer extension for pure index values in the following result.
\begin{lemma}
\label{lem:kummer-nontrivial}
Let $n\ge 4$ and $N=\frac{n(n-1)}2$.
Let $g\ge 2$ be an integer occurring as $g=g(m)$ for some square-free $m$ with $X^n-m$ irreducible.
Let $K:=\Q(\zeta_{2N})$ and $L:=K(g^{1/N})$.
Then $L\neq K$.
\end{lemma}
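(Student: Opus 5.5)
The plan is to reduce $L\neq K$ to the statement that the real number $g^{1/N}$ does not lie in the abelian field $K=\Q(\zeta_{2N})$. Suppose for contradiction that $L=K$. Then the positive real root $r:=g^{1/N}$ lies in $K$, so $\Q(r)$ is a subfield of an abelian extension of $\Q$. Hence $\Q(r)$ is Galois over $\Q$; since it is real, all of its conjugates are real.

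\textbf{Step 1 (normal form).} Write $g=a^e$ with $a\ge 2$ not a perfect power and $e\ge1$ maximal. For every prime $p\mid g$ we then have $e\mid v_p(g)$. Set $d:=N/\gcd(e,N)$, so that $r=a^{k/d}$ with $\gcd(k,d)=1$, and therefore $\Q(r)=\Q(a^{1/d})$. Since $a>0$ is not a perfect power, Capelli's criterion shows $X^d-a$ is irreducible, so $[\Q(r):\Q]=d$. If $d\ge3$, then $X^d-a$ has a non-real root $\zeta_d a^{1/d}$, which is a conjugate of $a^{1/d}$. This contradicts the normality of the real field $\Q(r)$. So necessarily $d\le2$, i.e.\ $\gcd(e,N)\ge N/2$, and in particular $e\ge N/2$.

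\textbf{Step 2 (size bound, $n\ge5$).} By Corollary~\ref{cor:finite-g}, any prime $p\mid g$ divides $n$ and satisfies $v_p(g)\le \frac n2 v_p(n)$. By Lemma~\ref{lem:vp-ineq}, $v_p(n)<\frac{n-1}{2}$, so
\[
e\le v_p(g)<\frac{n(n-1)}{4}=\frac N2 .
\]
This contradicts Step 1, and so $L\neq K$ for all $n\ge5$.

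\textbf{Step 3 (the case $n=4$).} Here $N=6$ and $K=\Q(\zeta_{12})=\Q(i,\sqrt3)$. By Corollary~\ref{cor:finite-g}, $g^2\mid 4^4$, so $g\in\{2,4,8,16\}$. The corresponding values of $g^{1/6}$ are
\[
2^{1/6},\quad 2^{1/3},\quad \sqrt2,\quad 2^{2/3}.
\]
The first, second and fourth generate fields of degree $6$ or $3$ over $\Q$, so they cannot lie in the degree-$4$ field $K$. The remaining value $\sqrt2$ is excluded because the quadratic subfields of $\Q(i,\sqrt3)$ are $\Q(i)$, $\Q(\sqrt3)$ and $\Q(\sqrt{-3})$.

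\textbf{Main obstacle.} The delicate point is the boundary case $d=2$, where $\Q(r)$ is a quadratic field and could genuinely sit inside a cyclotomic field. For $n\ge5$ this case is ruled out by the strict inequality of Lemma~\ref{lem:vp-ineq}. For $n=4$ it actually occurs numerically ($g=8$ gives $d=2$), and it has to be excluded by the explicit check of the quadratic subfields of $\Q(\zeta_{12})$ in Step 3.
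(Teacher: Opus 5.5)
Your proof is correct and follows essentially the same route as the paper: you rewrite $g^{1/N}$ as a primitive radical $a^{1/d}$ with $d=N/\gcd(e,N)$, use that a real Galois subfield of $\Q(\zeta_{2N})$ of degree at least $3$ would have to contain a non-real root of unity, rule out $d\le 2$ for $n\ge5$ via Corollary~\ref{cor:finite-g} and Lemma~\ref{lem:vp-ineq}, and settle $n=4$ by checking $g\in\{2,4,8,16\}$ against the subfields of $\Q(i,\sqrt3)$. The only difference is the order of the contradiction: you first derive $d\le 2$ and then contradict the valuation bound, whereas the paper first uses the bound to get its $b=N/d_0>2$ and then contradicts normality.
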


\begin{proof}
Assume for contradiction that $L=K$, i.e.\ $g^{1/N}\in K$.
Then $\Q(g^{1/N})\subseteq K$.
Since $K/\Q$ is cyclotomic, it is abelian; hence every intermediate field of $K/\Q$ is Galois over $\Q$.
In particular, $\Q(g^{1/N})/\Q$ is Galois.

Write $g=\prod_\ell \ell^{e_\ell}$ and set $d:=\gcd\{e_\ell:\ e_\ell>0\}$, so $g=h^d$ with $h\in\Z_{\ge 2}$
not an $r$th power in $\Q$ for any prime $r$.
Let $d_0=\gcd(N,d)$ and define $a=d/d_0$, $b=N/d_0$ so that $\gcd(a,b)=1$ and
\[
g^{1/N}=(h^a)^{1/b}.
\]
By the binomial irreducibility criterion (and the fact $\gcd(a,b)=1$), the polynomial $X^b-h^a$ is irreducible over $\Q$.
Hence $[\Q(g^{1/N}):\Q]=b$.

Notice that if $n\ge 5$, then $b>2$. Indeed, by Corollary~\ref{cor:finite-g}(iii), every prime divisor of $g$ divides $n$ and for $p\mid n$,
\[
e_p=v_p(g)\le \frac{n}{2}v_p(n).
\]
If $n\ge 5$, Lemma~\ref{lem:vp-ineq} gives $v_p(n)<\frac{n-1}{2}$, hence
\[
e_p<\frac{n}{2}\cdot\frac{n-1}{2}=\frac{N}{2}.
\]
Therefore $d\le \max_p e_p<\frac{N}{2}$, whence $d_0=\gcd(N,d)\le d<\frac{N}{2}$ and thus
$b=N/d_0>2$.

Let $n\ge 5$. Because $g>0$, the real $b$th root of $h^a$ gives an embedding $\Q((h^a)^{1/b})\hookrightarrow\R$. If $\Q((h^a)^{1/b})/\Q$ were Galois and $b>2$, then it would be normal and hence contain all conjugates
$(h^a)^{1/b}\zeta_b^k$; in particular it would contain $\zeta_b$.
But $\zeta_b\notin\R$ for $b>2$, contradicting $\Q((h^a)^{1/b})\subseteq\R$.

Let $n=4$. Here $N=6$ and $K=\Q(\zeta_{12})$ has degree $4$.
Corollary~\ref{cor:finite-g}(ii)--(iii) implies $g$ is a power of $2$ with $1\le g\le 16$, i.e.\ $g\in\{2,4,8,16\}$.
If $g\in\{2,4,16\}$, then $[\Q(g^{1/6}):\Q]\in\{6,3\}$, so $\Q(g^{1/6})\not\subseteq K$.
If $g=8$, then $g^{1/6}=2^{1/2}$ and $\Q(g^{1/6})=\Q(\sqrt2)$.
But $\Q(\zeta_{12})=\Q(i,\sqrt3)$ has quadratic subfields $\Q(i)$, $\Q(\sqrt3)$, and $\Q(\sqrt{-3})$,
so $\Q(\sqrt2)\not\subseteq K$.

Thus in all cases $g^{1/N}\notin K$, contradicting $L=K$.
\end{proof}

%==========================================================
\section{Density consequences in the pure family}
\label{sec:density-pure-family}
%==========================================================

%==========================================================
\subsection{Density results}
\label{sec:many-prime-sieve}
%==========================================================

Fix an integer $n\ge 4$ and set $N=\frac{n(n-1)}2$.
For square-free $m$ with $X^n-m$ irreducible over $\Q$, write $K_m=\Q(\alpha)$ with $\alpha^n=m$,
write $\Ocal_m=\Ocal_{K_m}$, and let
\[
g(m):=[\Ocal_m:\Z[\alpha]].
\]
By Corollary~\ref{cor:finite-g}, the set of possible values of $g(m)$ is finite; denote it by $G(n)$.

For each $g\in G(n)$ with $g\ge 2$, define $P_g$ as in Definition~\ref{def:Pg-general}.
By Lemma~\ref{lem:kummer-nontrivial} we have $K=\Q(\zeta_{2N})\subsetneq \Q(\zeta_{2N},g^{1/N})$,
so Proposition~\ref{prop:Pg-positive-density-general} applies and gives $\sum_{q\in P_g}1/q=\infty$. We show that primes in $P_g$ force an ABS local obstruction at $q\mid m$.

\begin{corollary}
\label{cor:Pg-obstruct}
Let $m$ be square-free with $X^n-m$ irreducible and suppose $g(m)=g\ge 2$.
If $q\in P_g$ divides $m$, then for every orientation $\omega$ of $\Ocal_m$ and every $\varepsilon\in\{\pm 1\}$,
the local equation
\[
f_{m,\omega}(x)=\varepsilon
\]
has no solution in $\Ocal_m\otimes_{\Z}\Z_q$.
Equivalently, $K_m$ has an Alp\"oge--Bhargava--Shnidman fixed-sign local obstruction at $q$.
\end{corollary}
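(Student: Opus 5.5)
The plan is to argue by contradiction at the single prime $q$, reducing the local equation to the coset statement of Theorem~\ref{thm:localcoset} and then to a residue computation. Fix an orientation $\omega$ of $\Ocal_m$ and a sign $\varepsilon$. Suppose $x\in\Ocal_m\otimes\Z_q$ satisfies $f_{m,\omega}(x)=\varepsilon$; after extending scalars, $f_{m,\omega}$ becomes the local index form $f_{\omega}$ of $A:=\Ocal_m\otimes\Z_q$ over $R:=\Z_q$, with the orientation $\omega\otimes 1$. The aim is to show that $\varepsilon$ must lie in the coset $f_\omega(\alpha)(\Z_q^\times)^N$, and then to see that this coset does not contain $\pm1$.

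\textbf{Step 1: the hypotheses of Theorem~\ref{thm:localcoset} hold at $q$.} Because $q\mid m$, Lemma~\ref{lem:eisenstein-prime-does-not-divide-g} gives $A=\Z_q[\alpha]$. The polynomial $X^n-m$ is Eisenstein at $q$, so $A$ is a DVR that is totally ramified of degree $n$, has residue field $\F_q$, and has uniformizer $\alpha$. Moreover $q\equiv1\pmod{2N}$ gives $q>N$, so $q\nmid N$.

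\textbf{Step 2: a solution is an $R$-generator.} Since $\varepsilon$ is a unit, Lemma~\ref{lem:unitwedge} shows that $1,x,\dots,x^{n-1}$ is a $\Z_q$-basis, so $A=\Z_q[x]$. Theorem~\ref{thm:localcoset} (equivalently Corollary~\ref{cor:local-solv-kummer}) then gives $\varepsilon\in f_\omega(\alpha)(\Z_q^\times)^N$.

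\textbf{Step 3: compute the coset.} Lemma~\ref{lem:index-equals-indexform} gives $f_{m,\omega}(\alpha)=\pm g$. The value is computed globally in $\Z$, and it is unchanged by base change. Hence $\pm\varepsilon\, g\in(\Z_q^\times)^N$. By Corollary~\ref{cor:sign-killed}, $-1\in(\Z_q^\times)^N$, so $g\in(\Z_q^\times)^N$. Reducing mod $q$ gives $\bar g\in(\F_q^\times)^N$, which contradicts $q\in P_g$. Note that $q\nmid g$ both from the definition of $P_g$ and from Lemma~\ref{lem:eisenstein-prime-does-not-divide-g}.

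The main point needing care is Step 2 together with the base-change compatibility. Theorem~\ref{thm:localcoset} speaks only about generators, so we must know that every solution of the unit equation is automatically a generator. Lemma~\ref{lem:unitwedge} supplies exactly this. We also need the identification $\wedgeTop_{\Z_q}A=(\wedgeTop_\Z\Ocal_m)\otimes\Z_q$, so that $\omega\otimes1$ is an orientation and the index form commutes with extension of scalars. Both facts are routine, and the rest is bookkeeping.
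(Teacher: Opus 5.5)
Your proposal is correct and follows the paper's proof: apply Theorem~\ref{thm:localcoset} to $A=\Ocal_m\otimes\Z_q$ with uniformizer $\alpha$, deduce $g\in\pm(\Z_q^\times)^N$, kill the sign using $q\equiv 1\pmod{2N}$, and contradict $q\in P_g$. The only differences are minor. You make explicit two steps the paper leaves implicit in this proof: that a solution of the unit equation is a generator (Lemma~\ref{lem:unitwedge}), and that $f_{m,\omega}(\alpha)=\pm g$ (Lemma~\ref{lem:index-equals-indexform}); you also remove the sign in $\Z_q^\times$ rather than after reducing to $\F_q^\times$, which changes nothing.
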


\begin{proof}
Fix an orientation $\omega$ of $\Ocal_m$ and $\varepsilon\in\{\pm1\}$.
Assume for contradiction that $f_{m,\omega}(x)=\varepsilon$ has a solution in $\Ocal_m\otimes\Z_q$.

Since $q\mid m$ and $m$ is square-free, the polynomial $X^n-m$ is Eisenstein at $q$, hence
$\Ocal_m\otimes\Z_q$ is a DVR of residue degree $1$ and is generated over $\Z_q$ by the uniformizer $\alpha$.
Moreover, $q\in P_g$ implies $q\equiv 1\pmod{2N}$, so in particular $q\nmid N$.
Applying the local Kummer rigidity (Theorem~\ref{thm:localcoset}) to $A=\Ocal_m\otimes\Z_q$ shows that
solvability for the unit right-hand side $\varepsilon$ forces
\[
g\in \pm(\Z_q^\times)^N.
\]
Reducing modulo $q$ gives $g\in \pm(\F_q^\times)^N$.
But since $q\equiv 1\pmod{2N}$, Corollary~\ref{cor:sign-killed} gives $\pm(\F_q^\times)^N=(\F_q^\times)^N$,
so $g\in(\F_q^\times)^N$, contradicting the defining condition $q\in P_g$.
\end{proof}

We show that the density of fixed-sign local solvability outside the $\alpha$--monogenic locus is zero.
\begin{theorem}
\label{thm:density-zero-ABS}
Let $n\ge 4$.  Let $S'$ be the set of square-free integers $m$ such that $X^n-m$ is irreducible and
$K_m$ is fixed-sign locally solvable for monogenicity in the sense of Definition~\ref{def:ABS},
but with $g(m)\ge 2$ (equivalently $\Ocal_m\neq \Z[\alpha]$).
Then $\delta(S')=0$.
\end{theorem}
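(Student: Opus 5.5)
The plan is to decompose $S'$ according to the finitely many possible index values and handle each piece with one prime-avoidance argument. By Corollary~\ref{cor:finite-g}, the set $G(n)$ of values of $g(m)$ is finite. Hence
\[
S'=\bigcup_{g\in G(n),\ g\ge 2} S'_g,\qquad S'_g:=\{m\in S':\ g(m)=g\},
\]
and a finite union of sets of upper density $0$ has density $0$. It therefore suffices to show that $\overline{\delta}(S'_g)=0$ for each fixed $g\ge 2$ in $G(n)$.

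Fix such a $g$ and take $m\in S'_g$. By definition, there exist an orientation $\omega$ and a sign $\varepsilon\in\{\pm1\}$ such that $f_{m,\omega}(x)=\varepsilon$ is solvable in $\Ocal_m\otimes\Z_\ell$ for every prime $\ell$. I claim that $m$ has no prime divisor in $P_g$. Suppose instead that some $q\in P_g$ divides $m$. Corollary~\ref{cor:Pg-obstruct} then says that for \emph{every} orientation and \emph{every} sign, the equation has no solution in $\Ocal_m\otimes\Z_q$. This contradicts the solvability at $\ell=q$.

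One point in the proof of Corollary~\ref{cor:Pg-obstruct} deserves a check. A local solution $x$ with unit value $\varepsilon$ is automatically a $\Z_q$-generator, by Lemma~\ref{lem:unitwedge}. Theorem~\ref{thm:localcoset} then places $\varepsilon$ in $f_{m,\omega}(\alpha)(\Z_q^\times)^N$. We also know $f_{m,\omega}(\alpha)=\pm g$ by Lemma~\ref{lem:index-equals-indexform}, and $g\in\Z_q^\times$ by Lemma~\ref{lem:eisenstein-prime-does-not-divide-g}. Since I am allowed to assume that corollary, this check is only a sanity verification.

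It follows that $S'_g$ is contained in the set $A_{P_g}$ of $P_g$-free integers. By Lemma~\ref{lem:kummer-nontrivial}, we have $\Q(\zeta_{2N})\subsetneq\Q(\zeta_{2N},g^{1/N})$ for every $g\ge 2$ in $G(n)$. This is the only place where $n\ge 4$ enters. Proposition~\ref{prop:Pg-positive-density-general} then gives $\sum_{q\in P_g}1/q=\infty$, and Lemma~\ref{lem:avoid-divergent-P} yields $\delta(A_{P_g})=0$. Hence $\overline{\delta}(S'_g)=0$, and summing over the finitely many $g$ gives $\delta(S')=0$.

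The only genuine obstacle is the Kummer nontriviality, Lemma~\ref{lem:kummer-nontrivial}, and it is already established. The rest of the argument is bookkeeping. The one thing to handle carefully is that Lemma~\ref{lem:avoid-divergent-P} bounds the upper density, so I should phrase the conclusion as $0\le\underline{\delta}(S')\le\overline{\delta}(S')=0$, which shows that the limit exists and equals $0$.
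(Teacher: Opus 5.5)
Your proposal is correct and is essentially the paper's proof. Like the paper, you decompose $S'$ by the finitely many index values $g\in G(n)$, show via Corollary~\ref{cor:Pg-obstruct} that each $S'_g$ consists of $P_g$-free integers, and close with Lemma~\ref{lem:kummer-nontrivial}, Proposition~\ref{prop:Pg-positive-density-general} and Lemma~\ref{lem:avoid-divergent-P}; your explicit remark that only the upper density is bounded (so the limit exists and equals $0$) is a small tightening of what the paper leaves implicit.
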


\begin{proof}
Write $S'=\bigcup_{g\in G(n),\, g\ge 2} S'_g$, where $S'_g$ consists of those $m\in S'$ with $g(m)=g$.
It suffices to show $\delta(S'_g)=0$ for each fixed $g\ge 2$.

Fix $g\ge 2$ and let $m\in S'_g$.
By definition of fixed-sign local solvability, there exist an orientation $\omega$ and $\varepsilon\in\{\pm1\}$ such that
$f_{m,\omega}(x)=\varepsilon$ is solvable over $\Z_\ell$ for every prime $\ell$, hence in particular over $\Z_q$
for every prime $q\mid m$.

If $q\in P_g$ divides $m$, then Corollary~\ref{cor:Pg-obstruct} contradicts local solvability at $q$.
Hence every $m\in S'_g$ is divisible by no prime in $P_g$, i.e.\ every $m\in S'_g$ is $P_g$-free.

By Proposition~\ref{prop:Pg-positive-density-general}, $\sum_{q\in P_g}1/q=\infty$.
Therefore Lemma~\ref{lem:avoid-divergent-P} implies the set of $P_g$-free integers has density $0$, and thus
$\delta(S'_g)=0$. Since the union over $g$ is finite, $\delta(S')=0$.
\end{proof}

Note that monogenicity implies no fixed-sign local obstruction as follows.
\begin{lemma}
\label{lem:monogenic-implies-no-local-obstruction}
Let $K$ be a number field of degree $n\ge 2$ with ring of integers $\Ocal_K$.
If $\Ocal_K$ is monogenic, i.e.\ $\Ocal_K=\Z[\theta]$ for some $\theta\in\Ocal_K$, then $K$ is fixed-sign locally solvable
for monogenicity in the sense of Definition~\ref{def:ABS}.
\end{lemma}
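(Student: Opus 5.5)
The plan is to produce, for the given monogenic $\Ocal_K=\Z[\theta]$, an explicit orientation $\omega$ and sign $\varepsilon$ that work at every prime $\ell$ at once. The natural candidate is to orient by the power basis itself: set
\[
\omega:=1\wedge\theta\wedge\theta^2\wedge\cdots\wedge\theta^{n-1}\in\wedgeTop_\Z\Ocal_K .
\]
Since $\{1,\theta,\dots,\theta^{n-1}\}$ is a $\Z$-basis of $\Ocal_K$, this element generates the rank-one module $\wedgeTop_\Z\Ocal_K$. The same conclusion also follows from Lemma~\ref{lem:index-equals-indexform}, which gives $f_{K,\omega_0}(\theta)=\pm I(\theta)=\pm1$ for any orientation $\omega_0$. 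With this choice, the defining relation \eqref{eq:index-form-classical} gives $f_{K,\omega}(\theta)=1$ on the nose, so we may take $\varepsilon=1$.

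Next we pass to local solvability. For each rational prime $\ell$, consider the element $x:=\theta\otimes1\in\Ocal_K\otimes_\Z\Z_\ell$. The index form is compatible with base change: fixing the $\Z$-basis $e_i=\theta^{i-1}$, the form $f_{K,\omega}$ is the polynomial $\det M$ of Lemma~\ref{lem:det-equals-wedge}, which has integer coefficients. Its extension to $\Ocal_K\otimes\Z_\ell$ is therefore the index form of the $\Z_\ell$-algebra $\Ocal_K\otimes\Z_\ell$ with orientation $\omega\otimes1$. Evaluating this polynomial at $x$ gives the same value as at $\theta$, namely $1$, so $f_{K,\omega}(x)=\varepsilon$ is solvable over $\Z_\ell$ for every $\ell$. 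This is exactly Definition~\ref{def:ABS}.

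The only point requiring care is the compatibility of $f_{K,\omega}$ with $-\otimes_\Z\Z_\ell$. This is needed so that ``a solution in $\Ocal_K\otimes\Z_\ell$'' refers to the polynomial extension of the global form. It is immediate from the polynomial description of $f_{K,\omega}$ in coordinates, as in Theorem~\ref{thm:ABHS-index-open} and Lemma~\ref{lem:det-equals-wedge}, because wedge products commute with flat base change. Apart from this, the argument is a direct one-line verification, and I expect no substantive obstacle.

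Finally, if one insists on an arbitrary preassigned orientation $\omega'=\pm\omega$, the same element $\theta$ still works. Lemma~\ref{lem:index-equals-indexform} shows that replacing $\omega$ by $-\omega$ flips the sign of the form, so $f_{K,\omega'}(\theta)=\pm1$, and we take $\varepsilon:=f_{K,\omega'}(\theta)$.
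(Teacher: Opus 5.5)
Your proposal is correct and matches the paper's proof: orient by $\omega_\theta=1\wedge\theta\wedge\cdots\wedge\theta^{n-1}$, take $\varepsilon=+1$, and use the global element $\theta$ as a $\Z_\ell$-solution at every prime $\ell$. Your added remarks on compatibility of the index form with base change to $\Z_\ell$, and on handling the opposite orientation $-\omega_\theta$, are valid refinements that the paper leaves implicit.
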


\begin{proof}
Assume $\Ocal_K=\Z[\theta]$ and set $\omega_\theta:=1\wedge\theta\wedge\cdots\wedge\theta^{n-1}$.
Then $\omega_\theta$ is an orientation of $\Ocal_K$ and with $\omega=\omega_\theta$ the index form satisfies $f_{K,\omega}(\theta)=1$.
For every prime $\ell$, viewing $\theta$ in $\Ocal_K\otimes\Z_\ell$ gives a $\Z_\ell$-solution to $f_{K,\omega}(x)=1$.
\end{proof}

We show the Density equality between monogenicity vs.\ $\alpha$--monogenicity among square-free parameters.
\begin{theorem}
\label{thm:density-equality}
Fix $n\ge 4$. Among square-free $m$ with $X^n-m$ irreducible, the set of parameters for which $\Ocal_m$ is monogenic
but $\Ocal_m\neq\Z[\alpha]$ has two-sided natural density $0$.
\end{theorem}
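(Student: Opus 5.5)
The plan is to reduce Theorem~\ref{thm:density-equality} directly to Theorem~\ref{thm:density-zero-ABS} via Lemma~\ref{lem:monogenic-implies-no-local-obstruction}. Let $M$ denote the set of square-free $m$ with $X^n-m$ irreducible such that $\Ocal_m$ is monogenic but $\Ocal_m\neq\Z[\alpha]$. By Lemma~\ref{lem:index-equals-indexform}, the condition $\Ocal_m\neq\Z[\alpha]$ is equivalent to $g(m)=[\Ocal_m:\Z[\alpha]]\ge 2$. For $m\in M$, monogenicity of $\Ocal_m$ and Lemma~\ref{lem:monogenic-implies-no-local-obstruction} show that $K_m$ is fixed-sign locally solvable for monogenicity in the sense of Definition~\ref{def:ABS}, with the orientation $\omega_\theta$ and $\varepsilon=1$. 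Hence $M\subseteq S'$, where $S'$ is the set from Theorem~\ref{thm:density-zero-ABS}.

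Density zero passes to subsets. Indeed, $\#\{m\in M:|m|\le X\}\le\#\{m\in S':|m|\le X\}$, so $\limsup_X \#\{m\in M:|m|\le X\}/2X\le\delta(S')=0$. Therefore the limit exists and $\delta(M)=0$.

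For the equivalent formulation given in the introduction (Theorem~\ref{thm:intro-density-equality}), I would note that $\alpha$-monogenicity ($g(m)=1$) trivially implies monogenicity with $\theta=\alpha$. So the $\alpha$-monogenic parameters form a subset of the monogenic ones, and the difference between the two sets is exactly $M$. If the $\alpha$-monogenic set has a density (as given by \cite{NDN25}), then the monogenic set has the same density, because $\#\{\text{mono},|m|\le X\}=\#\{\alpha\text{-mono},|m|\le X\}+\#\{M,|m|\le X\}$ and the last term is $o(X)$.

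There is essentially no obstacle here, since all the substance lives in Theorem~\ref{thm:density-zero-ABS}. The only point needing care is that ABS solvability asks for \emph{some} orientation and sign. Lemma~\ref{lem:monogenic-implies-no-local-obstruction} supplies exactly such a pair, so the inclusion $M\subseteq S'$ is immediate.
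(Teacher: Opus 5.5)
Your proposal is correct and matches the paper's proof. The paper likewise notes $g(m)\ge 2$, uses Lemma~\ref{lem:monogenic-implies-no-local-obstruction} to obtain fixed-sign local solvability, and concludes that such $m$ lie in the density-zero set $S'$ of Theorem~\ref{thm:density-zero-ABS}; your extra remarks (density zero passing to subsets, and the reformulation as equality of densities) are routine and correct.
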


\begin{proof}
If $\Ocal_m$ is monogenic but $\Ocal_m\neq\Z[\alpha]$, then $g(m)\ge 2$.
By Lemma~\ref{lem:monogenic-implies-no-local-obstruction}, $K_m$ is fixed-sign locally solvable.
Hence such $m$ lie in the set $S'$ of Theorem~\ref{thm:density-zero-ABS}, which has density $0$.
\end{proof}

Consequently, let
\[
\mathcal{M}_{\mathrm{sf}}:=\{m\in\Z:\ m\ \text{square-free and }X^n-m\ \text{irreducible over }\Q\}.
\]
Let
\[
\mathcal{M}_{\mathrm{sf}}^{\mathrm{mono}}:=\{m\in\mathcal{M}_{\mathrm{sf}}:\ \Ocal_{K_m}\ \text{is monogenic}\}.
\]
By \cite{NDN25}, the density of $\alpha$--monogenicity in $\mathcal{M}_{\mathrm{sf}}$ is
\[
\delta\!\left(\{m\in\mathcal{M}_{\mathrm{sf}}:\ \Ocal_{K_m}=\Z[\alpha]\}\right)
=\frac{6}{\pi^2}\prod_{p\mid n}\frac{p}{p+1}.
\]
Then
\[
\delta\!\left(\mathcal{M}_{\mathrm{sf}}^{\mathrm{mono}}\right)
=\frac{6}{\pi^2}\prod_{p\mid n}\frac{p}{p+1}.
\]

%==========================================================
\subsection{Low-degree examples}
\label{subsec:explicit-examples}
%==========================================================

Recall that, from Dedekind's index theorem, we obtained the following criterion for being $\alpha$-monogenity in the pure family.

\begin{theorem}[{\cite[Theorem~2.4]{NDN25}}]
\label{thm:alpha-criterion}
Let $n\ge 2$ and let $m\in\Z\setminus\{0\}$ be such that $X^n-m$ is irreducible over $\Q$.
Let $\alpha^n=m$ and $K_m=\Q(\alpha)$.
Then
\[
\Ocal_m=\Z[\alpha]
\quad\Longleftrightarrow\quad
\text{$m$ is square-free and $\vvp_p(m^p-m)=1$ for every prime $p\mid n$.}
\]
\end{theorem}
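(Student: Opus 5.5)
This criterion is quoted from \cite{NDN25}, but I would prove it directly from Dedekind's criterion. It is local: $\Ocal_m=\Z[\alpha]$ holds if and only if $\Z[\alpha]$ is $p$-maximal for every prime $p$. Moreover, since $\disc(X^n-m)=\pm n^n m^{n-1}$, only primes $p\mid nm$ can divide the index. So I will run over three kinds of primes: those dividing $m$ but not $n$, those dividing $n$ but not $m$, and those dividing both.

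\emph{Primes $p\mid m$.} Here $X^n-m\equiv X^n\pmod p$, so Dedekind's test uses $g=X$ and $h=X^{n-1}$, with
\[
F=\frac{X^n-(X^n-m)}{p}=\frac{m}{p}.
\]
Then $\Z[\alpha]$ is $p$-maximal if and only if $\bar X\nmid \overline{m/p}$, i.e.\ $p^2\nmid m$. Taking all $p\mid m$ at once, this shows that $p$-maximality at every prime dividing $m$ is equivalent to $m$ being square-free; in particular the square-free hypothesis is forced. Note that if $p\mid n$ and $p\mid m$ with $p^2\nmid m$, then $\vvp_p(m^p-m)=\vvp_p(m)=1$ automatically. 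So the condition in the statement is consistent at such primes, and this case is handled by the Eisenstein argument already used in Lemma~\ref{lem:eisenstein-prime-does-not-divide-g}.

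\emph{Primes $p\mid n$ with $p\nmid m$.} Write $n=p^k u$ with $p\nmid u$. Over $\F_p$ we have
\[
X^n-m\equiv (X^u-m)^{p^k},
\]
where I use $m^{p^k}\equiv m$ to replace $m$ by a suitable $p^k$-th root; more cleanly, $\bar m=\bar c^{\,p^k}$ with $c\equiv m^{p^{-k}}$ computed in $\F_p$. Since $X^u-\bar c$ is separable, write $X^u-\bar c=\prod_i \bar\phi_i$ into distinct irreducibles. Dedekind's criterion then asks whether
\[
F=\frac{\prod\phi_i^{p^k}-(X^n-m)}{p}
\]
is coprime modulo $p$ to $\prod\phi_i$ (when $k\ge1$ every $\phi_i$ has multiplicity $\ge2$). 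I would choose lifts with $\prod\phi_i=X^u-c$ for an integer $c\equiv m^{p^{-k}}$, which can be taken as $c=m^{p^{(p-1)k'}}$-type lifts or simply $c=m$ after adjusting. Expanding $(X^u-c)^{p^k}$, the middle binomial terms are divisible by $p$. Evaluated at a root $\xi$ of $X^u-\bar c$ in $\overline{\F}_p$, I expect that after the $p$-divisible cross terms are accounted for, $F(\xi)$ becomes a nonzero multiple of $(m-c^{p^k})/p$, and the cross-term contributions cancel or vanish. Hence $p$-maximality should be equivalent to $p^2\nmid m-c^{p^k}$. Choosing $c=m$ gives $m^{p^k}-m$; then I would reduce $\vvp_p(m^{p^k}-m)=1$ to $\vvp_p(m^p-m)=1$ using $m^{p^k}-m=(m^{p^{k-1}})^p-m^{p^{k-1}}+\cdots$ and the fact that $x\mapsto x^p$ is a contraction on $1+p\Z_p$ with the Teichmüller lift fixed. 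Concretely, $m^p\equiv m\pmod{p^2}$ iff $m$ is congruent to its Teichmüller lift mod $p^2$, and this condition is stable under iterating the $p$-power map.

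The main obstacle is this middle step. I have to control the binomial cross terms $\binom{p^k}{j}(-c)^jX^{u(p^k-j)}/p$ modulo $(p,X^u-c)$: those with $\vvp_p\binom{p^k}{j}\ge2$ vanish, while those with valuation exactly $1$ (where $p^{k-1}\mid j$) need care, especially for $p=2$. I would first check the case $k=1$ by hand, where the only surviving sum is $\frac{(Y-c)^p-Y^p+c^p}{p}$ in $Y=X^u\equiv c$, which vanishes at $Y=c$. I would then try to induct on $k$ via the identity $X^n-m=(X^{n/p})^p-m$, or else fall back on Ore's Newton polygon method, which handles the tower more uniformly.
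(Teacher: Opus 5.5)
Your overall route, Dedekind's criterion prime by prime, is the one behind the cited result. The paper only quotes \cite[Theorem~2.4]{NDN25}, which was derived there from Dedekind's index theorem. Your treatment of the primes $p\nmid nm$, of the primes $p\mid m$, and of the compatibility at $p\mid\gcd(m,n)$ is correct.

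The gap is the step you flag yourself. At primes $p\mid n$, $p\nmid m$, you only \emph{expect} that $\bar F(\xi)$ is a nonzero multiple of $\overline{(m-m^{p^k})/p}$. To justify it you propose term-by-term control of the binomial cross terms, an unspecified induction on $k$, or Ore's method. As written, the one place where the condition $\vvp_p(m^p-m)=1$ actually enters the proof is therefore not established.

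The step is in fact immediate and needs no binomial bookkeeping. Take $c=m$ directly, since Frobenius is the identity on $\F_p$. Take the monic lifts $g=X^u-m$ and $h=(X^u-m)^{p^k-1}$. You do not need $g$ to be a product of lifts of the irreducible factors of $X^u-\bar m$, and $X^u-m$ need not factor over $\Z$ anyway. Replacing $g,h$ by $g+pa,\ h+pb$ changes $\bar F$ by $\bar a\bar h+\bar b\bar g$, which leaves $\gcd(\bar F,\bar g,\bar h)$ unchanged. Then $F=G(X^u)$ with
\[
G(Y)=\frac{(Y-m)^{p^k}-Y^{p^k}+m}{p}\in\Z[Y].
\]
For any root $\xi\in\overline{\F}_p$ of $X^u-\bar m$ one has $\bar F(\xi)=\bar G(\bar m)=\overline{G(m)}$, because reduction mod $p$ commutes with evaluating an integral polynomial at an integer. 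Evaluating over $\Z$ first gives $pG(m)=0-m^{p^k}+m$. Hence $\bar F(\xi)=\overline{(m-m^{p^k})/p}$ exactly. All cross terms cancel in $\Z$ before any reduction, and $p=2$ is no different. This is just your $k=1$ observation, which works verbatim for every $k$. Since $\gcd(\bar g,\bar h)=\bar g$ for $k\ge 1$ and $\bar g$ is separable, $p$-maximality is equivalent to $p^2\nmid m^{p^k}-m$.

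Your Teichm\"uller argument for passing from $p^k$ to $p$ is fine, but this step is also a one-liner. From $m^{p^{k-1}}\equiv m\pmod p$ and the implication $x\equiv y\pmod p\Rightarrow x^p\equiv y^p\pmod{p^2}$, you get $m^{p^k}\equiv m^p\pmod{p^2}$. Hence $m^{p^k}-m\equiv m^p-m\pmod{p^2}$. With these two lines inserted, your proposal becomes a complete proof along the same lines as the source.
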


\noindent\textbf{Maximality of the power order in degrees $4$.}
\begin{equation}\label{eq:alpha-mono-small-n}
\Ocal_{\Q(\sqrt[4]{m})}=\Z[\sqrt[4]{m}]
\quad\Longleftrightarrow\quad m\not\equiv 1\pmod 4
\end{equation}
Equivalently, $m^p\not\equiv m\pmod{p^2}$ for every prime $p\mid n$.

%----------------------------------------------------------
\begin{example}[Quartic case:]
\label{ex:quartic-73-13}
Let $n=4$ and $\alpha^4=m$.

\smallskip\noindent
(1) (\emph{$\alpha$--monogenic.})
If $m=2$, then $2\not\equiv 1\pmod 4$, so $\Ocal_{\Q(\alpha)}=\Z[\alpha]$ by \eqref{eq:alpha-mono-small-n}.

\smallskip\noindent
(2) (\emph{Monogenic but not $\alpha$--monogenic.})
If $m=73$, then $73\equiv 1\pmod 4$, so $\Z[\alpha]\neq \Ocal_{\Q(\alpha)}$ by \eqref{eq:alpha-mono-small-n}.
Nevertheless, explicit solutions of the quartic index form equation are known in the residue class
$m\equiv 9\pmod{16}$: in particular, $K=\Q(\sqrt[4]{73})$ (and also $K=\Q(\sqrt[4]{89})$) admits an element of
index $1$ (and hence is monogenic), while evidence suggests such examples are extremely sparse within that class.
(See, e.g., \cite[Rem.~5]{GaalRemete2017}.)

\smallskip\noindent
(3) (\emph{A concrete Eisenstein-prime obstruction to ABS fixed-sign local solubility.})
Let $m=13$ and $\alpha^4=13$.  Since $13\equiv 5\pmod 8$, an explicit integral basis for
$\Ocal_{\Q(\alpha)}$ is (cf.\ \cite[\S4, Case $m=5+8k$]{GaalRemete2017})
\[
\Bigl\{\,1,\ \alpha,\ \frac{1+\alpha^2}{2},\ \frac{\alpha+\alpha^3}{2}\Bigr\}.
\]
Relative to the power basis $(1,\alpha,\alpha^2,\alpha^3)$, the change-of-basis matrix is upper triangular
with diagonal $(1,1,\tfrac12,\tfrac12)$, hence
\[
g(13):=[\Ocal_{\Q(\alpha)}:\Z[\alpha]]=4.
\]

Now $N=\frac{4\cdot3}{2}=6$, and $q=13$ is an Eisenstein prime for $X^4-13$.
Moreover $13\equiv 1\pmod{2N}=1\pmod{12}$ and $13\nmid 2Ng(13)$.
Since $\F_{13}^\times$ is cyclic of order $12$, its subgroup of $6$th powers has order
$12/\gcd(12,6)=2$, namely
\[
(\F_{13}^\times)^6=\{\pm1\}.
\]
In particular $g(13)=4\notin(\F_{13}^\times)^6$, so $13\in P_{g(13)}$ in the sense of
Definition~\ref{def:Pg-general}.
By the local coset $\Rightarrow$ residue-field power mechanism at Eisenstein primes
(cf.\ Corollary~\ref{cor:Pg-obstruct}),
it follows that for \emph{every} orientation $\omega$ of $\Ocal_{\Q(\alpha)}$ and every $\varepsilon\in\{\pm1\}$
the local equation
\[
f_{m,\omega}(x)=\varepsilon
\]
has no solution over $\Z_{13}$.  Equivalently, $K_{13}$ has an ABS fixed-sign local obstruction at $13$.
\end{example}

%==========================================================
\section{Quantitative sieve closure for a fixed index}
\label{sec:quantitative-sieve}
%==========================================================

Fix $n\ge 4$ and put $N=\frac{n(n-1)}2$.
Let $g\ge 2$ be an integer such that, for $K:=\Q(\zeta_{2N})$,
\[
K\subsetneq L:=\Q(\zeta_{2N},g^{1/N}).
\]
Define
\[
P_g:=\Bigl\{\,q\ \text{prime}:\ q\nmid 2Ng,\ q\equiv 1\!\!\pmod{2N},\ g\notin(\F_q^\times)^N\,\Bigr\}.
\]
For $X\ge 2$, define $\mathcal A_g(X)$ to be the set of integers $m$ such that:
\begin{itemize}[leftmargin=*]
\item $|m|\le X$;
\item $m$ is square-free;
\item $X^n-m$ is irreducible over $\Q$ and $g(m)=g$;
\item $K_m$ is fixed-sign locally solvable for monogenicity (Definition~\ref{def:ABS}).
\end{itemize}

We can show log-power upper bound for fixed $g$.
\begin{proposition}
\label{prop:logpower-fixed-g}
With notation as above, set
\[
\delta_g:=\frac{1}{[K:\Q]}-\frac{1}{[L:\Q]}>0.
\]
Then
\[
\#\mathcal A_g(X)\ \ll_{n,g}\ \frac{X}{(\log X)^{\delta_g}}
\qquad (X\to\infty).
\]
Moreover, since $[K:\Q]=\varphi(2N)$ and $[L:K]\ge 2$, one has
\[
\delta_g=\frac{[L:K]-1}{[L:K]\,[K:\Q]}\ \ge\ \frac{1}{2\varphi(2N)},
\]
so a uniform exponent $\frac{1}{2\varphi(2N)}$ works for all such $g$.
\end{proposition}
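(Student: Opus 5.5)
The plan is to reduce to an upper-bound sieve estimate for $P_g$-free integers. By Corollary~\ref{cor:Pg-obstruct}, exactly as in the proof of Theorem~\ref{thm:density-zero-ABS}, every $m\in\mathcal A_g(X)$ is divisible by no prime of $P_g$. Hence
\[
\#\mathcal A_g(X)\le \#\{m\in\Z:\ 1\le |m|\le X,\ p\nmid m\ \text{for all }p\in P_g\}.
\]
It then suffices to bound the count of positive $P_g$-free integers up to $X$ by $\ll X(\log X)^{-\delta_g}$ and double it.

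\textbf{Chebotarev--Mertens input.} From Proposition~\ref{prop:Pg-positive-density-general} we have $\pi_{P_g}(t)\sim\delta_g\,t/\log t$. Partial summation then gives
\[
\sum_{q\le z,\,q\in P_g}\frac1q=\delta_g\log\log z+O_{n,g}(1).
\]
For the $O(1)$ I will need a slightly better error term than plain asymptotic Chebotarev provides. Either the unconditional effective Chebotarev error term, or even $\pi_{P_g}(t)=\delta_g\operatorname{Li}(t)+O(t/(\log t)^2)$, suffices to make the resulting integral converge. Exponentiating yields
\[
V(z):=\prod_{q\le z,\,q\in P_g}\Bigl(1-\frac1q\Bigr)\asymp_{n,g}(\log z)^{-\delta_g}.
\]

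\textbf{Sieve step, the main obstacle.} The naive bound via Lemma~\ref{lem:avoid-divergent-P} only produces the density product for fixed $z$, with error $2^{\pi(z)}$. That error cannot be taken with $z$ a power of $X$, so a genuine upper-bound sieve is needed. I would apply either the Selberg sieve or the fundamental lemma of the combinatorial (Brun) sieve, with sifting set $\mathcal A=\{1\le m\le X\}$ and the primes of $P_g$ up to $z=X^{1/2}$. The local densities are $\omega(q)=1$, with remainders $|r_d|\le1$, and the sieve dimension is $\kappa=\delta_g\le1$ by the Mertens estimate above. The standard upper bound $S(\mathcal A,P_g,z)\ll XV(z)+\sum_{d\le z^2,\,d\mid P(z)}|r_d|$, for instance in the form of \cite[Thm.~7.14]{HR74}-type results or the Selberg bound $X/G(z)+z^2$ with $G(z)\ge\sum_{d\le z,\,d\mid P(z)}1/d\gg V(z)^{-1}$, yields $\ll XV(X^{1/2})+X\ll X(\log X)^{-\delta_g}$ after rescaling $z$. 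The delicate point is choosing the Selberg level so that the remainder $z^2$ is $o(X(\log X)^{-\delta_g})$. Taking $z=X^{1/3}$ suffices.

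An alternative that sidesteps the sieve entirely is the Wirsing/Hall--Tenenbaum mean-value theorem for the nonnegative multiplicative function $\mathbf 1_{P_g\text{-free}}$. It gives $\sum_{m\le X}f(m)\ll \frac{X}{\log X}\exp\bigl(\sum_{p\le X}f(p)/p\bigr)=\frac{X}{\log X}(\log X)^{1-\delta_g}$, which is exactly the claimed bound. I would likely present this as the cleanest route.

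\textbf{Uniform exponent.} This is pure algebra. Since $[L:\Q]=[L:K][K:\Q]$, we have $\delta_g=\frac{[L:K]-1}{[L:K][K:\Q]}$. The function $x\mapsto(x-1)/x$ is increasing and equals $1/2$ at $x=2$, so $[L:K]\ge2$ gives $\delta_g\ge 1/(2\varphi(2N))$. Finally, $(\log X)^{-\delta_g}\le(\log X)^{-1/(2\varphi(2N))}$ for $X\ge e$.
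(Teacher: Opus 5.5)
Your proposal is correct and shares the paper's skeleton. It reduces to $P_g$-free integers via Corollary~\ref{cor:Pg-obstruct}. It obtains the Mertens-type estimate $\sum_{q\le y,\,q\in P_g}1/q=\delta_g\log\log y+O(1)$ by partial summation from an effective Chebotarev theorem; you correctly observe that the bare asymptotic would only give $(\log X)^{-\delta_g+o(1)}$, and the paper invokes Lagarias--Odlyzko for exactly this. It then uses the resulting bound $V_g(y)\ll(\log y)^{-\delta_g}$, and the same algebra for the uniform exponent.

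The difference lies in the closing count. The paper applies the fundamental lemma of the sieve with $y=X^{1/2}$ to get $S(X,X^{1/2})\ll X\,V_g(X^{1/2})$. Your Selberg variant is the same idea, but the displayed chain ``$\ll XV(X^{1/2})+X\ll X(\log X)^{-\delta_g}$'' is false as written. With the crude remainder $z^2$ you need $z=X^{1/3}$, as you then say, and this costs nothing since $\log X^{1/3}\asymp\log X$.

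Your preferred route is the Halberstam--Richert/Hall--Tenenbaum inequality $\sum_{m\le X}f(m)\ll\frac{X}{\log X}\exp\bigl(\sum_{p\le X}f(p)/p\bigr)$ for multiplicative $0\le f\le 1$, applied to $f=\mathbf{1}_{P_g\text{-free}}$. It removes all choices of sieve level and all remainder bookkeeping. It is also the upper-bound counterpart of the Wirsing--Indlekofer argument the paper uses right afterwards (Corollary~\ref{cor:Pg-free-asymptotic-wirsing}) to obtain the asymptotic $\kappa_g X/(\log X)^{\delta_g}$.

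What the paper's sieve formulation buys in return is portability. It applies unchanged when the sifted sequence is not all integers up to $X$, for example arithmetic progressions or polynomial values $F(t)$ as in the paper's closing remark. In those settings the multiplicative mean-value inequality would have to be replaced by Shiu- or Nair--Tenenbaum-type results.
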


\begin{proof}

Call an integer $m$ \emph{$P_g$-free} if $q\nmid m$ for every $q\in P_g$.
Let $m\in\mathcal A_g(X)$. If $q\in P_g$ divides $m$, then $v_q(m)=1$ and hence $X^n-m$ is Eisenstein at $q$.
Also $q\equiv 1\pmod{2N}$ implies $q>2N\ge N$, so $q\nmid N$.
Therefore Corollary~\ref{cor:Pg-obstruct} applies and gives an ABS local obstruction at $q$,
contradicting $m\in\mathcal A_g(X)$. Hence every $m\in\mathcal A_g(X)$ is $P_g$-free, so
\begin{equation}
\label{eq:AgX-le-Pfree}
\#\mathcal A_g(X)\ \le\ \#\{m\in\Z:\ |m|\le X,\ \text{$m$ is $P_g$-free}\}.
\end{equation}

Let $G:=\Gal(L/\Q)$ and let $H:=\Gal(L/K)$, so that $H\lhd G$ and $|G|=[L:\Q]$, $|H|=[L:K]$.
Set $C:=H\setminus\{1\}\subseteq G$, which is conjugacy-stable.
Chebotarev implies that the set of unramified primes $q$ with $\Frob_q(L/\Q)\subseteq C$ has density
\[
\frac{|C|}{|G|}=\frac{|H|-1}{|G|}=\frac{1}{[K:\Q]}-\frac{1}{[L:\Q]}=\delta_g.
\]
By Lemma~\ref{lem:kummer-splitting-general}, the set $P_g$ differs by finitely many primes from this Chebotarev set.

Write $\pi_{P_g}(y):=\#\{q\le y:\ q\in P_g\}$.
An effective Chebotarev prime number theorem (e.g.\ Lagarias--Odlyzko \cite{LO77}, see also Serre's exposition \cite[\S3]{SerreChebotarev})
gives an asymptotic of the form
\begin{equation}
\label{eq:chebPNT-Pg}
\pi_{P_g}(y)
=
\delta_g\,\Li(y)
\ +\ O_{n,g}\!\bigl(\Li(y^{\beta_0})\bigr)
\ +\ O_{n,g}\!\Bigl(y\,e^{-c\sqrt{\log y}}\Bigr)
\qquad (y\to\infty),
\end{equation}
where $c=c(n,g)>0$, and the term $\Li(y^{\beta_0})$ appears only if there is an exceptional real zero $\beta_0\in(0,1)$.

A partial-summation identity yields
\[
\sum_{\substack{q\le y\\ q\in P_g}}\frac{1}{q}
=
\frac{\pi_{P_g}(y)}{y}+\int_{2}^{y}\frac{\pi_{P_g}(t)}{t^2}\,dt.
\]
Inserting \eqref{eq:chebPNT-Pg} shows that the main term contributes
\[
\frac{\delta_g\Li(y)}{y}+\delta_g\int_{2}^{y}\frac{\Li(t)}{t^2}\,dt
=
\delta_g\log\log y+O_{n,g}(1),
\]
while the error terms contribute $O_{n,g}(1)$ because
$\int_2^\infty e^{-c\sqrt{\log t}}t^{-1}\,dt<\infty$ and
$\int_2^\infty \Li(t^{\beta_0})t^{-2}\,dt<\infty$ for $\beta_0<1$.
Hence
\begin{equation}
\label{eq:recip-sum-Pg}
\sum_{\substack{q\le y\\ q\in P_g}}\frac{1}{q}
=
\delta_g\log\log y+O_{n,g}(1)
\qquad (y\to\infty).
\end{equation}
Define the truncated Mertens product
\[
V_g(y):=\prod_{\substack{q\le y\\ q\in P_g}}\Bigl(1-\frac{1}{q}\Bigr).
\]
Using $\log(1-1/q)=-1/q+O(1/q^2)$ and $\sum_q 1/q^2<\infty$, \eqref{eq:recip-sum-Pg} implies
\begin{equation}
\label{eq:Vg-logpower}
V_g(y)\ \ll_{n,g}\ (\log y)^{-\delta_g}
\qquad (y\to\infty).
\end{equation}

For $X\ge 2$ and $2\le y\le X$, set
\[
S(X,y):=\#\Bigl\{\,1\le m\le X:\ q\nmid m\ \text{for all }q\in P_g\text{ with }q\le y\,\Bigr\}.
\]
Since $\#\{1\le m\le X:\ d\mid m\}=X/d+O(1)$ uniformly in squarefree $d$, the fundamental lemma of the
dimension-$1$ sieve (e.g.\ the linear sieve; see \cite[\S6.4]{IK04} or \cite[Ch.~5]{HR74}) implies that for any fixed $u>1$
and any $y\le X^{1/u}$ one has
\[
S(X,y)\ \ll_{u}\ X\,V_g(y).
\]
Choose $u=2$ and $y=X^{1/2}$. Combining with \eqref{eq:Vg-logpower} and $\log y\asymp\log X$ gives
\[
S(X,X^{1/2})\ \ll_{n,g}\ \frac{X}{(\log X)^{\delta_g}}.
\]
Every $P_g$-free integer $1\le m\le X$ is counted by $S(X,X^{1/2})$, hence
\[
\#\{1\le m\le X:\ \text{$m$ is $P_g$-free}\}\ \ll_{n,g}\ \frac{X}{(\log X)^{\delta_g}}.
\]
The same bound holds for negative $m$ by symmetry, so
\[
\#\{m\in\Z:\ |m|\le X,\ \text{$m$ is $P_g$-free}\}
\ \ll_{n,g}\ \frac{X}{(\log X)^{\delta_g}}.
\]
Combining with \eqref{eq:AgX-le-Pfree} proves the proposition.
\end{proof}

\begin{remark}
If one wishes to avoid invoking an effective Chebotarev theorem, it suffices for the sieve bound
to have a lower bound of the shape
\[
\sum_{\substack{q\le y\\ q\in P_g}}\frac1q \ge \frac{\delta_g}{2}\log\log y
\qquad (y\ge y_0(n,g)),
\]
which yields $V_g(y)\ll_{n,g}(\log y)^{-\delta_g/2}$ and hence
$\#\mathcal A_g(X)\ll_{n,g}X/(\log X)^{\delta_g/2}$.
\end{remark}

As shown earlier in the paper, $P_g$ differs by finitely many primes from the Chebotarev set
$\{q:\ \Frob_q(L/\Q)\subseteq H\setminus\{1\}\}$ where $H=\Gal(L/K)$.
%==========================================================
% (1) Full Mertens asymptotic for P_g
%==========================================================
We can state the Mertens theorem for the Chebotarev--Kummer obstruction set $P_g$ as follows.
\begin{proposition}
\label{prop:mertens-Pg-constant}
Assume $L\ne K$.
Then there exist constants $B_g\in\R$ and $C_g>0$ such that, as $x\to\infty$,
\begin{align}
\sum_{\substack{q\le x\\ q\in P_g}}\frac1q
&=\delta_g\log\log x+B_g+o(1),
\label{eq:Pg-mertens-sum}
\\
\prod_{\substack{q\le x\\ q\in P_g}}\Bigl(1-\frac1q\Bigr)
&\sim \frac{C_g}{(\log x)^{\delta_g}}.
\label{eq:Pg-mertens-prod}
\end{align}
In particular, $\sum_{q\in P_g}\frac1q=\infty$.
\end{proposition}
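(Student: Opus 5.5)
The plan is to upgrade the estimate \eqref{eq:recip-sum-Pg} from an $O(1)$ error to a genuine constant plus $o(1)$, and then exponentiate. We start from the effective Chebotarev prime number theorem \eqref{eq:chebPNT-Pg}, which we write as $\pi_{P_g}(t)=\delta_g\Li(t)+E(t)$. Here $E(t)=O_{n,g}\bigl(\Li(t^{\beta_0})+t e^{-c\sqrt{\log t}}\bigr)$, with the $\Li(t^{\beta_0})$ term present only when there is an exceptional zero. The only property of $E$ we need is that $\int_2^\infty |E(t)|\,t^{-2}\,dt<\infty$ and that $E(x)/x\to0$. Both hold: $\beta_0<1$ is fixed (it depends only on $L$), and $e^{-c\sqrt{\log t}}/t$ is integrable on $[2,\infty)$.

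For the sum \eqref{eq:Pg-mertens-sum} we use the partial summation identity already written in the proof of Proposition~\ref{prop:logpower-fixed-g}:
\[
\sum_{\substack{q\le x\\ q\in P_g}}\frac1q=\frac{\pi_{P_g}(x)}{x}+\int_2^x\frac{\pi_{P_g}(t)}{t^2}\,dt .
\]
Substituting the decomposition gives the main part
\[
\delta_g\Bigl(\frac{\Li(x)}{x}+\int_2^x\frac{\Li(t)}{t^2}\,dt\Bigr).
\]
By the classical computation behind Mertens' second theorem, this main part equals $\delta_g\log\log x+c_0\delta_g+o(1)$ for an absolute constant $c_0$. One way to see this is to integrate by parts: $\int_2^x \Li(t)t^{-2}\,dt=-\Li(x)/x+\int_2^x dt/(t\log t)$, which yields exactly $\log\log x-\log\log2$ together with the cancelling boundary term. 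The error part is $E(x)/x+\int_2^\infty E(t)t^{-2}\,dt-\int_x^\infty E(t)t^{-2}\,dt$. This equals a constant plus $o(1)$, since the tail of a convergent integral tends to $0$. Collecting the constants defines $B_g$.

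For the product \eqref{eq:Pg-mertens-prod} we take logarithms:
\[
\log\prod_{\substack{q\le x\\ q\in P_g}}\Bigl(1-\frac1q\Bigr)=-\sum_{\substack{q\le x\\ q\in P_g}}\frac1q-\sum_{\substack{q\le x\\ q\in P_g}}\Bigl(\frac1q+\log\Bigl(1-\frac1q\Bigr)\Bigr).
\]
The second sum has terms of size $O(q^{-2})$, so it converges absolutely as $x\to\infty$ to a constant $D_g$, with tail $o(1)$. Hence the logarithm of the product equals $-\delta_g\log\log x-B_g-D_g+o(1)$. Exponentiating gives \eqref{eq:Pg-mertens-prod} with $C_g=e^{-B_g-D_g}>0$. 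Divergence of $\sum_{q\in P_g}1/q$ is then immediate from \eqref{eq:Pg-mertens-sum}, since $\delta_g>0$ by Proposition~\ref{prop:Pg-positive-density-general}.

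The main point requiring care is the error term: the $o(1)$ requires convergence of $\int E(t)t^{-2}\,dt$, not merely its boundedness, and the possible exceptional zero must be handled. Both are fine because $L$ is fixed, so $\beta_0$ is a fixed number less than $1$ and $\Li(t^{\beta_0})\ll t^{\beta_0}$. A fallback that avoids effective Chebotarev altogether is to decompose $\sum_{q\in P_g}q^{-s}$ as the combination $\frac{1}{|G|}\sum_{\chi}\overline{\chi}(C)\log L(s,\chi,L/\Q)$ of Artin $L$-functions. By Brauer induction, each $\log L(s,\chi)$ with $\chi\ne1$ is holomorphic and nonvanishing at $s=1$. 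Applying a Tauberian/Mertens-type argument to this decomposition again yields $\delta_g\log\log x+B_g+o(1)$.
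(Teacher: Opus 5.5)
Your proof is correct, but it runs in the opposite direction from the paper's.

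The paper first identifies $P_g$, up to finitely many primes, with the Chebotarev set $\{q:\Frob_q(L/\Q)\subseteq H\setminus\{1\}\}$. It then splits $H\setminus\{1\}$ into conjugacy classes $C_1,\dots,C_r$ of $\Gal(L/\Q)$ and quotes the Mertens \emph{product} theorem of Arango-Pi\~neros--Keliher--Keyes \cite[Theorem~A]{AKK22} for each class. Multiplying gives \eqref{eq:Pg-mertens-prod}, and only then does it take logarithms to obtain \eqref{eq:Pg-mertens-sum}.

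You prove the \emph{sum} first, directly from the Chebotarev prime number theorem with a de la Vall\'ee Poussin error term \eqref{eq:chebPNT-Pg}, which the paper already invokes in Proposition~\ref{prop:logpower-fixed-g}. Your key observation is that absolute convergence of $\int_2^\infty |E(t)|\,t^{-2}\,dt$ upgrades the $O(1)$ in \eqref{eq:recip-sum-Pg} to a constant plus $o(1)$. You are right that the bare asymptotic $\pi_{P_g}(x)\sim\delta_g\Li(x)$ would not suffice for this. You then exponentiate to get the product.

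Comparing the two:
\begin{itemize}
\item Your route is self-contained modulo a standard effective Chebotarev theorem.
\item It needs no class-by-class bookkeeping.
\item It gives the constant explicitly as $B_g=-\delta_g\log\log 2+\int_2^\infty E(t)\,t^{-2}\,dt$.
\item The paper's route is shorter, packaging all the analytic input into a single citation.
\end{itemize}

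Your Artin $L$-function fallback would need holomorphy and nonvanishing on the whole line $\operatorname{Re}(s)=1$ to feed a Tauberian argument, not merely at $s=1$. Brauer induction does supply this, and in any case your main argument does not depend on the fallback.

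One small sign slip: since $\log(1-1/q)=-1/q+\bigl(1/q+\log(1-1/q)\bigr)$, the second sum in your logarithm identity should carry a plus sign. This gives $C_g=e^{D_g-B_g}$ rather than $e^{-B_g-D_g}$. It does not affect the existence or positivity of $C_g$.
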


\begin{proof}
Let $G:=\Gal(L/\Q)$ and $H:=\Gal(L/K)$, so that $H\lhd G$. For every unramified rational prime $q$,
write $\Frob_q(L/\Q)\subseteq G$ for the Frobenius conjugacy class.

By the Kummer splitting criterion proved earlier in the paper, for primes $q\nmid 2Ng$ the conditions
\[
q\equiv 1\pmod{2N}
\quad\text{and}\quad
g\notin(\F_q^\times)^N
\]
are equivalent to: $q$ splits completely in $K$ but does not split completely in $L$.
Equivalently, $\Frob_q(L/\Q)\subseteq H\setminus\{1\}$.
Thus $P_g$ differs by at most finitely many primes (those dividing $2Ng\disc(L)$) from the Chebotarev set
\[
\mathcal{P}_{H\setminus\{1\}}:=\{q\ \text{unramified in }L:\ \Frob_q(L/\Q)\subseteq H\setminus\{1\}\}.
\]

Write $H\setminus\{1\}$ as a disjoint union of conjugacy classes $C_1,\dots,C_r$ in $G$.
For each class $C_i$, Arango--Pi\~neros--Keliher--Keyes prove a Mertens product theorem for the Chebotarev
set $\{q:\ \Frob_q(L/\Q)=C_i\}$, with a nonzero constant and an asymptotic of the form
\[
\prod_{\substack{q\le x\\ \Frob_q(L/\Q)=C_i}}\Bigl(1-\frac1q\Bigr)
\sim \frac{c(C_i)}{(\log x)^{|C_i|/|G|}}
\qquad (x\to\infty),
\]
see \cite[Theorem~A]{AKK22}.
Multiplying over $i=1,\dots,r$ (the sets are disjoint) yields
\[
\prod_{\substack{q\le x\\ \Frob_q(L/\Q)\subseteq H\setminus\{1\}}}\Bigl(1-\frac1q\Bigr)
\sim \frac{C_g'}{(\log x)^{\sum_i |C_i|/|G|}}
=\frac{C_g'}{(\log x)^{(|H|-1)/|G|}}
\qquad (x\to\infty),
\]
for some constant $C_g'>0$.
Since $|G|=[L:\Q]$ and $|H|=[L:K]=[L:\Q]/[K:\Q]$, we have
\[
\frac{|H|-1}{|G|}
=\frac1{[K:\Q]}-\frac1{[L:\Q]}
=\delta_g.
\]
Inserting the finitely many excluded primes changes the product by a nonzero factor, hence
\eqref{eq:Pg-mertens-prod} holds for some $C_g>0$.

Taking logarithms in \eqref{eq:Pg-mertens-prod} gives
\[
\sum_{\substack{q\le x\\ q\in P_g}}\log\Bigl(1-\frac1q\Bigr)
= -\delta_g\log\log x + \log C_g + o(1).
\]
Using $\log(1-u)=-u+O(u^2)$ and $\sum_q q^{-2}<\infty$, we have
\[
\sum_{\substack{q\le x\\ q\in P_g}}\log\Bigl(1-\frac1q\Bigr)
=
-\sum_{\substack{q\le x\\ q\in P_g}}\frac1q + O(1).
\]
Comparing the last two displays yields \eqref{eq:Pg-mertens-sum} for some constant $B_g\in\R$.
In particular, $\sum_{q\in P_g}1/q=\infty$.
\end{proof}

%==========================================================
% (2) Optional: asymptotic for P_g-free integers (Wirsing/Delange)
%==========================================================
We get the asymptotic count of $P_g$-free integers.
\begin{corollary}
\label{cor:Pg-free-asymptotic-wirsing}
Assume the hypotheses of Proposition~\ref{prop:mertens-Pg-constant}. Define
\[
\mathbf{1}_{P_g\text{-free}}(m):=
\begin{cases}
1,&\text{if $q\nmid m$ for all $q\in P_g$},\\
0,&\text{otherwise.}
\end{cases}
\]
Then there exists a constant $\kappa_g>0$ such that
\[
\sum_{1\le m\le X}\mathbf{1}_{P_g\text{-free}}(m)
\sim
\kappa_g\,\frac{X}{(\log X)^{\delta_g}}
\qquad (X\to\infty).
\]
\end{corollary}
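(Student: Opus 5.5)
The indicator $\mathbf{1}_{P_g\text{-free}}$ is a multiplicative function taking values in $\{0,1\}$. It equals $0$ on powers of primes $q\in P_g$ and $1$ on all other prime powers. I would prove the asymptotic with the Wirsing--Delange mean-value theorem for nonnegative multiplicative functions bounded by $1$. This gives the asymptotic directly once the prime sum is controlled by a Chebotarev prime number theorem.

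\textbf{Step 1: the prime mean.} For primes, $\mathbf{1}_{P_g\text{-free}}(p)=1$ exactly when $p\notin P_g$. By Proposition~\ref{prop:Pg-positive-density-general} and the effective form \eqref{eq:chebPNT-Pg}, the complement of $P_g$ among primes has density $\tau:=1-\delta_g\in(0,1)$. Concretely,
\[
\sum_{p\le x}\mathbf{1}_{P_g\text{-free}}(p)\log p=\tau x+o(x).
\]
This follows by partial summation from $\pi_{P_g}(y)=\delta_g\Li(y)+o(y/\log y)$ together with the prime number theorem.

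\textbf{Step 2: apply Wirsing.} Let $f$ be multiplicative with $0\le f\le 1$ and $\sum_{p\le x}f(p)\log p\sim\tau x$ for some $\tau>0$. Wirsing's theorem then gives
\[
\sum_{m\le X}f(m)\sim\frac{e^{-\gamma\tau}}{\Gamma(\tau)}\,\frac{X}{\log X}\prod_{p\le X}\Bigl(1+\frac{f(p)}{p}+\frac{f(p^2)}{p^2}+\cdots\Bigr).
\]
Here the prime-power hypotheses are automatic because $f\le1$. For our $f$, each Euler factor is $(1-1/p)^{-1}$ if $p\notin P_g$ and $1$ if $p\in P_g$. So the product equals
\[
\prod_{p\le X}\Bigl(1-\frac1p\Bigr)^{-1}\cdot\prod_{\substack{q\le X\\ q\in P_g}}\Bigl(1-\frac1q\Bigr).
\]

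\textbf{Step 3: identify the order of growth.} By Mertens, the first product is $\sim e^{\gamma}\log X$. By \eqref{eq:Pg-mertens-prod} of Proposition~\ref{prop:mertens-Pg-constant}, the second is $\sim C_g(\log X)^{-\delta_g}$. Substituting these into Step 2 gives the claimed asymptotic with
\[
\kappa_g=\frac{e^{\gamma(1-\tau)}C_g}{\Gamma(\tau)}=\frac{e^{\gamma\delta_g}C_g}{\Gamma(1-\delta_g)}>0.
\]

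\textbf{Main obstacle.} The delicate point is checking that Wirsing's hypotheses hold in the precise form needed, namely an asymptotic rather than merely an upper bound for the prime sum. This is where the effective Chebotarev input \eqref{eq:chebPNT-Pg} is essential; a weaker statement such as \eqref{eq:recip-sum-Pg} alone would not suffice.

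An alternative route is Delange's Tauberian approach, which I would keep as a fallback. There one writes the Dirichlet series
\[
\sum_m \mathbf{1}_{P_g\text{-free}}(m)\,m^{-s}=\zeta(s)\prod_{q\in P_g}(1-q^{-s}).
\]
Via the Artin $L$-functions of $L/\Q$, this equals $(s-1)^{-(1-\delta_g)}$ times a function holomorphic and nonvanishing near $s=1$, up to a factor continuous on $\Re s\ge1$. Delange's theorem then gives the same result. That route, however, requires handling Artin $L$-functions, whereas Wirsing needs only the prime count, so I would use Wirsing.
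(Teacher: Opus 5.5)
Your proposal is correct and follows essentially the paper's route. Both arguments apply Wirsing's mean-value theorem to the multiplicative indicator. The paper uses Indlekofer's variant, whose logarithmic hypothesis $\sum_{p\le x}f(p)\log p/p\sim(1-\delta_g)\log x$ it takes from Proposition~\ref{prop:mertens-Pg-constant}. You instead verify the classical hypothesis $\sum_{p\le x}f(p)\log p\sim(1-\delta_g)x$, and you additionally identify $\kappa_g=e^{\gamma\delta_g}C_g/\Gamma(1-\delta_g)$ via Mertens and \eqref{eq:Pg-mertens-prod}.

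One small correction to your ``main obstacle'': Step~1 needs only the plain Chebotarev asymptotic $\pi_{P_g}(y)\sim\delta_g\Li(y)$. The stronger input is actually used in Step~3, through the constant $C_g$ in \eqref{eq:Pg-mertens-prod}.
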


\begin{proof}
Set $f(m):=\mathbf{1}_{P_g\text{-free}}(m)$. Then $f$ is multiplicative and satisfies
\[
f(p^k)=
\begin{cases}
0,&p\in P_g,\\
1,&p\notin P_g,
\end{cases}
\qquad(k\ge 1).
\]
In particular, $0\le f(n)\le 1$.

We apply Wirsing’s mean value theorem in the form of Indlekofer \cite[Theorem~(Satz)~1.1]{Ind80}.
It suffices to verify the standard hypotheses:
\begin{enumerate}[label=\textup{(\roman*)},leftmargin=*]
\item $f(p)=O(1)$ for primes $p$ (trivial) and $\sum_{p}\sum_{k\ge 2} f(p^k)/p^k<\infty$ (true since $f(p^k)\le 1$);
\item there exists $\alpha>0$ such that $\sum_{p\le x} f(p)\frac{\log p}{p}\sim \alpha\log x$.
\end{enumerate}
For (ii), note
\[
\sum_{p\le x} f(p)\frac{\log p}{p}
=
\sum_{p\le x}\frac{\log p}{p}
-\sum_{\substack{p\le x\\ p\in P_g}}\frac{\log p}{p}.
\]
The classical estimate $\sum_{p\le x}\frac{\log p}{p}=\log x+O(1)$ is standard.
Moreover, Proposition~\ref{prop:mertens-Pg-constant} gives
$\sum_{p\le x,\,p\in P_g}\frac1p=\delta_g\log\log x+O(1)$, and partial summation yields
\[
\sum_{\substack{p\le x\\ p\in P_g}}\frac{\log p}{p}
=
\delta_g\log x+O(1).
\]
Hence
\[
\sum_{p\le x} f(p)\frac{\log p}{p}
=
(1-\delta_g)\log x+O(1),
\]
so Indlekofer applies with $\alpha=1-\delta_g$.
The conclusion of \cite[Theorem~(Satz)~1.1]{Ind80} gives the claimed asymptotic with exponent $\delta_g=1-\alpha$.
\end{proof}

%----------------------------------------------------------
\section{On scaled Eisenstein families}
\label{sec:examples-counterexamples}
%----------------------------------------------------------

This section records two complementary phenomena.
First, we package a broad class of \emph{scaled Eisenstein} families in which the Eisenstein--prime
obstruction sieve applies verbatim once one has bounded index values and nontrivial Kummer data.
Second, we record thin-parameter constructions which show that (i) the Kummer nontriviality hypothesis
in the sieve is genuinely necessary, and (ii) in general one-parameter families, the \textsc{ABS}
fixed--sign local condition does not force the \emph{distinguished} generator to have index~$1$.

Throughout, $n\ge 2$ is an integer and
\[
N \ :=\ \frac{n(n-1)}2.
\]

%----------------------------------------------------------
\subsection{Scaled Eisenstein polynomial families}
\label{subsec:scaled-family}
%----------------------------------------------------------

Fix a polynomial
\[
h(X)=c_{n-1}X^{n-1}+c_{n-2}X^{n-2}+\cdots+c_1X+c_0\in\Z[X],
\qquad c_0\neq 0.
\]
For each integer parameter $t\in\Z$, consider the monic polynomial
\[
f_t(X)\ :=\ X^n+t\,h(X)\ \in\ \Z[X],
\]
and let $\theta_t$ be a root.  Set
\[
K_t:=\Q(\theta_t),\qquad \Ocal_t:=\Ocal_{K_t},\qquad g(t):=[\Ocal_t:\Z[\theta_t]]\in\Z_{\ge 1}.
\]
We say that $K_t$ is \emph{$\theta_t$--monogenic} if $g(t)=1$.

We work on the square-free parameter set
\[
\mathcal{T}_{h,\mathrm{sf}}
:=\Bigl\{\,t\in\Z:\ |t|>1,\ t\ \text{square-free, and }\gcd(t,c_0)=1\,\Bigr\}.
\]

We obtain the portability of the local coset constraint in the scaled Eisenstein family.
\begin{proposition}
\label{prop:scaled-portability-merged}
Let $t\in\mathcal{T}_{h,\mathrm{sf}}$ and let $q\mid t$ be a prime.  Then:
\begin{enumerate}[label=\textup{(\roman*)}]
\item \textup{(Eisenstein at $q$)} The polynomial $f_t(X)$ is Eisenstein at $q$.
In particular $f_t$ is irreducible over $\Q$ and $[K_t:\Q]=n$.

\item \textup{(Local identification and trivial $q$--part of the index)} There is a unique prime of $\Ocal_t$ above $q$,
the extension $K_t\otimes_\Q\Q_q/\Q_q$ is totally ramified of degree $n$ and residue degree $1$, and one has
a canonical identification of complete local rings
\[
\Ocal_t\otimes_\Z \Z_q \ \cong\ \Z_q[\theta_t].
\]
In particular, $\Ocal_t\otimes_\Z\Z_q$ is a DVR of rank $n$ over $\Z_q$ generated by the uniformizer $\theta_t$,
so the local index at $q$ equals $1$ and $q\nmid g(t)$ (equivalently, $g(t)\in\Z_q^\times$).

\item \textup{(Coset constraint for unit values of the index form)} Fix an orientation $\omega$ of $\Ocal_t$
and write $f_{t,\omega}$ for the associated index form.
Let $\omega_q$ be the induced orientation on $\Ocal_t\otimes\Z_q$.
We continue to write $f_{t,\omega}$ for its base change to $\Z_q$.
Assume moreover that $q\nmid N$.
Then for every $\beta\in \Ocal_t\otimes\Z_q$ with $\Z_q[\beta]=\Ocal_t\otimes\Z_q$ one has
\[
f_{t,\omega}(\beta)\in \pm\, g(t)\cdot (\Z_q^\times)^N.
\]
Consequently, if for some $\varepsilon\in\{\pm 1\}$ the local equation $f_{t,\omega}(x)=\varepsilon$
is solvable over $\Z_q$, then
\[
g(t)\in \pm(\Z_q^\times)^N
\qquad\text{and hence}\qquad
\overline{g(t)}\in \pm(\F_q^\times)^N.
\]

\item \textup{(Killing the sign when $q\equiv 1\!\!\pmod{2N}$)} If in addition $q\equiv 1\pmod{2N}$,
then $-1\in(\F_q^\times)^N$ (and also $-1\in(\Z_q^\times)^N$), so the sign is irrelevant and local solvability forces
\[
\overline{g(t)}\in(\F_q^\times)^N.
\]
\end{enumerate}
\end{proposition}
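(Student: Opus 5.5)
We prove the four parts in order, transporting the argument for the pure family (Lemma~\ref{lem:eisenstein-prime-does-not-divide-g}, Corollary~\ref{cor:Pg-obstruct}) to $f_t$.

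\emph{Part (i).} Since $q\mid t$, every non-leading coefficient $t c_i$ of $f_t$ is divisible by $q$. The constant term is $t c_0$. Because $t$ is square-free we have $v_q(t)=1$, and because $\gcd(t,c_0)=1$ we have $q\nmid c_0$. Hence $v_q(tc_0)=1$, so $f_t$ is Eisenstein at $q$. Eisenstein's criterion then gives irreducibility over $\Q$ and $[K_t:\Q]=n$.

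\emph{Part (ii).} This is the standard local theory of Eisenstein polynomials. Over $\Q_q$, $f_t$ remains Eisenstein, hence irreducible, so $K_t\otimes\Q_q$ is a field. That field is totally ramified of degree $n$ with uniformizer $\theta_t$: the Newton polygon, or directly $v(\theta_t^n)=v(t h(\theta_t))$ with $h(\theta_t)$ a unit, gives valuation $1/n$. Consequently $\Z_q[\theta_t]$ is a DVR and equals the integral closure of $\Z_q$, i.e.\ $\Ocal_t\otimes\Z_q$. Uniqueness of the prime above $q$ and residue degree $1$ follow. Localizing the index at $q$ exactly as in the proof of Lemma~\ref{lem:eisenstein-prime-does-not-divide-g} gives $q^{v_q(g(t))}=1$.

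\emph{Part (iii).} Apply Theorem~\ref{thm:localcoset} with $R=\Z_q$, $A=\Ocal_t\otimes\Z_q$ and $\pi=\theta_t$; the hypotheses are exactly those verified in (ii), together with $q\nmid N$. The theorem gives that every generator value lies in $f_{t,\omega}(\theta_t)(\Z_q^\times)^N$. Lemma~\ref{lem:index-equals-indexform}, applied globally to $\beta=\theta_t$, gives $f_{t,\omega}(\theta_t)=\pm g(t)$. By (ii), $g(t)\in\Z_q^\times$, so the coset $\pm g(t)(\Z_q^\times)^N$ makes sense.

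For the consequence, let $x$ be a local solution of $f_{t,\omega}(x)=\varepsilon$. Since $\varepsilon$ is a unit, Lemma~\ref{lem:unitwedge} shows that $x$ generates $\Ocal_t\otimes\Z_q$. Hence $\varepsilon\in\pm g(t)(\Z_q^\times)^N$, so $g(t)\in\pm(\Z_q^\times)^N$, and reducing mod $q$ gives the residue statement.

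\emph{Part (iv).} This is immediate from Corollary~\ref{cor:sign-killed}.

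The only step needing genuine care is (ii), the identification of the completion with $\Z_q[\theta_t]$ and its DVR structure. It is classical, and I would cite the standard fact that an Eisenstein polynomial generates the full ring of integers of a totally ramified extension. The base-change compatibility of the index form, namely that the induced orientation is $\omega\otimes1$ and the values agree, is routine.
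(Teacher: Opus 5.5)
Your proposal is correct and follows essentially the same route as the paper: $v_q(tc_0)=1$ gives the Eisenstein property, the standard local theory of Eisenstein polynomials gives $\Ocal_t\otimes\Z_q=\Z_q[\theta_t]$ (a DVR with uniformizer $\theta_t$), and then Theorem~\ref{thm:localcoset} combined with $f_{t,\omega}(\theta_t)=\pm g(t)$ and Lemma~\ref{lem:unitwedge} shows that a unit-valued solution is a local generator, which yields the coset constraint. The only cosmetic difference is in (iv): you cite Corollary~\ref{cor:sign-killed} (which also gives $q\nmid N$, so (iii) applies), whereas the paper repeats that cyclicity-plus-Hensel argument inline.
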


\begin{proof}
(i) Since $q\mid t$, every non-leading coefficient of $f_t$ is divisible by $q$.
The constant term is $t\,c_0$, and because $t$ is square-free and $\gcd(t,c_0)=1$ we have
$v_q(t\,c_0)=1$. Hence $f_t$ is Eisenstein at $q$.

(ii) Eisenstein at $q$ implies $\Q_q(\theta_t)/\Q_q$ is totally ramified of degree $n$ with residue degree $1$,
and $\Z_q[\theta_t]$ is integrally closed in $\Q_q(\theta_t)$; hence $\Z_q[\theta_t]$ is the ring of integers
of $\Q_q(\theta_t)$.
Since there is a unique prime of $\Ocal_t$ above $q$, the factor $\Ocal_t\otimes\Z_q$ identifies canonically with this
local ring, yielding $\Ocal_t\otimes\Z_q\simeq \Z_q[\theta_t]$.
In particular the $q$-part of the global index is trivial, so $q\nmid g(t)$.

(iii) Let $\omega_q$ denote the image of $\omega$ in $\bigwedge_{\Z_q}^n(\Ocal_t\otimes\Z_q)$.
By the standard top-wedge change-of-lattice computation,
\[
f_{t,\omega_q}(\theta_t)=\pm[\Ocal_t:\Z[\theta_t]]=\pm g(t)\in\Z_q^\times.
\]
Apply the local coset constraint (Theorem~\ref{thm:localcoset}) to
$A=\Ocal_t\otimes\Z_q$ with uniformizer $\pi=\theta_t$ and orientation $\omega_q$.
Since $q\nmid N$, the unit value set on $\Z_q$--generators is exactly
$f_{t,\omega_q}(\theta_t)\cdot(\Z_q^\times)^N=\pm g(t)\cdot(\Z_q^\times)^N$.
If $f_{t,\omega}(x)=\varepsilon\in\{\pm1\}$ is solvable over $\Z_q$, then $\varepsilon\in\Z_q^\times$ and
Lemma~\ref{lem:unitwedge} implies $x$ is a local generator, hence $\varepsilon$ lies in this coset,
forcing $g(t)\in\pm(\Z_q^\times)^N$. Reducing modulo $q$ gives the residue-field condition.

(iv) If $q\equiv 1\pmod{2N}$ then $q>2N$, so $q\nmid N$ and $2N\mid (q-1)$.
Since $\F_q^\times$ is cyclic of order $q-1$, it follows that $-1\in (\F_q^\times)^N$; this lifts to
$-1\in(\Z_q^\times)^N$ because $q\nmid N$.
\end{proof}

\begin{corollary}[A one-prime \textsc{ABS} obstruction certificate in the scaled Eisenstein family]
\label{cor:scaled-one-prime-certificate-merged}
Let $t\in\mathcal{T}_{h,\mathrm{sf}}$ and suppose $g(t)=g\ge 2$.
Let $q\mid t$ be a prime with $q\equiv 1\pmod{2N}$.
If $g\notin(\F_q^\times)^N$, then $K_t$ has an \textsc{ABS} fixed-sign local obstruction at $q$:
for every orientation $\omega$ of $\Ocal_t$ and every $\varepsilon\in\{\pm1\}$,
the local equation $f_{t,\omega}(x)=\varepsilon$ has no solution over $\Z_q$.
\end{corollary}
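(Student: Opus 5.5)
This follows directly from Proposition~\ref{prop:scaled-portability-merged}, arguing by contradiction exactly as in Corollary~\ref{cor:Pg-obstruct}.

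Fix an orientation $\omega$ of $\Ocal_t$ and a sign $\varepsilon\in\{\pm1\}$, and suppose $f_{t,\omega}(x)=\varepsilon$ has a solution $x\in\Ocal_t\otimes\Z_q$. We first collect the hypotheses needed for part~(iii):
\begin{enumerate}[label=\textup{(\alph*)}]
\item Since $q\mid t$ and $t\in\mathcal{T}_{h,\mathrm{sf}}$, part~(i) says $f_t$ is Eisenstein at $q$.
\item Part~(ii) then identifies $\Ocal_t\otimes\Z_q$ with the residue-degree-one DVR $\Z_q[\theta_t]$, generated by the uniformizer $\theta_t$, and gives $g\in\Z_q^\times$.
\item Since $q\equiv 1\pmod{2N}$, we have $q>2N$, so $q\nmid N$.
\end{enumerate}

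Because $\varepsilon$ is a unit, Lemma~\ref{lem:unitwedge} shows that any solution $x$ is a $\Z_q$-generator of $\Ocal_t\otimes\Z_q$. Part~(iii), which rests on Theorem~\ref{thm:localcoset} and the identity $f_{t,\omega_q}(\theta_t)=\pm g$, then forces
\[
g\in\pm(\Z_q^\times)^N,
\]
and after reducing modulo $q$, $\overline{g}\in\pm(\F_q^\times)^N$.

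Next we remove the sign. By part~(iv), or equivalently Corollary~\ref{cor:sign-killed}, the congruence $q\equiv1\pmod{2N}$ gives $-1\in(\F_q^\times)^N$. Hence $\pm(\F_q^\times)^N=(\F_q^\times)^N$, so $g\in(\F_q^\times)^N$. This contradicts the hypothesis $g\notin(\F_q^\times)^N$.

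Since $\omega$ and $\varepsilon$ were arbitrary, the local equation $f_{t,\omega}(x)=\varepsilon$ has no solution over $\Z_q$ for any orientation and any sign, which is the claimed obstruction at $q$. There is no genuine obstacle here. The only point needing care is that the local value set in Theorem~\ref{thm:localcoset} is taken over \emph{generators}, and Lemma~\ref{lem:unitwedge} is exactly what guarantees that a solution with unit right-hand side is such a generator.
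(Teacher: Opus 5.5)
Your proposal is correct and follows essentially the same approach as the paper: argue by contradiction and apply Proposition~\ref{prop:scaled-portability-merged}(iii)--(iv) to get $g\in(\F_q^\times)^N$. The paper's proof does this in one line by citing part~(iv). You additionally unpack the ingredients that part~(iii) already contains: Lemma~\ref{lem:unitwedge}, $q\nmid N$, and the sign-killing step.
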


\begin{proof}
If $f_{t,\omega}(x)=\varepsilon$ were solvable over $\Z_q$ for some $\omega$ and $\varepsilon\in\{\pm1\}$,
then Proposition~\ref{prop:scaled-portability-merged}(iv) would force $g\in(\F_q^\times)^N$,
contradicting the hypothesis.
\end{proof}

From here on assume $n\ge 4$ (so $N\ge 6$). We get the density zero for \textsc{ABS}-unobstructed non-$\theta_t$--monogenic parameters in the scaled Eisenstein family.

\begin{theorem}
\label{thm:scaled-density-zero-merged}
Fix $n\ge 4$ and $h\in\Z[X]$ as above, and keep $N=\frac{n(n-1)}2$.
Assume the following hypotheses on the square-free parameter set $\mathcal{T}_{h,\mathrm{sf}}$:
\begin{enumerate}[label=\textup{(\alph*)}]
\item \textup{(Finite index values)} There exists a finite set $G\subset\Z_{\ge1}$ such that
$g(t)\in G$ for all $t\in\mathcal{T}_{h,\mathrm{sf}}$.
\item \textup{(Kummer nontriviality)} For every $g\in G$ with $g\ge 2$,
\[
\Q(\zeta_{2N},g^{1/N})\neq \Q(\zeta_{2N}).
\]
\end{enumerate}
Let $\mathcal{S}'_{h}\subseteq\mathcal{T}_{h,\mathrm{sf}}$ be the set of $t$ such that:
\begin{enumerate}[label=\textup{(\roman*)}]
\item $g(t)\ge 2$, and
\item $K_t$ has \emph{no} local obstruction to monogenicity in the \textsc{ABS} fixed-sign sense.
\end{enumerate}
Then $\mathcal{S}'_{h}$ has two-sided natural density $0$ in $\Z$ (hence also relative density $0$
in $\mathcal{T}_{h,\mathrm{sf}}$).
\end{theorem}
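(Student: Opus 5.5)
The plan is to mirror the proof of Theorem~\ref{thm:density-zero-ABS}, replacing the pure-family inputs by Proposition~\ref{prop:scaled-portability-merged} and Corollary~\ref{cor:scaled-one-prime-certificate-merged}.

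First, decompose
\[
\mathcal{S}'_h=\bigcup_{g\in G,\ g\ge 2}\mathcal{S}'_{h,g},\qquad \mathcal{S}'_{h,g}:=\{t\in\mathcal{S}'_h:\ g(t)=g\}.
\]
Hypothesis~(a) makes this a finite union, so it suffices to show $\delta(\mathcal{S}'_{h,g})=0$ for each fixed $g\in G$ with $g\ge2$. For such $g$, hypothesis~(b) says $K=\Q(\zeta_{2N})\subsetneq L=K(g^{1/N})$. Hence Definition~\ref{def:Pg-general} applies, and Proposition~\ref{prop:Pg-positive-density-general} gives $\sum_{q\in P_g}1/q=\infty$.

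Next, show that every $t\in\mathcal{S}'_{h,g}$ is $P_g$-free. Suppose instead that $q\in P_g$ divides $t$. Then $q\equiv1\pmod{2N}$ and $g\notin(\F_q^\times)^N$. Since $t\in\mathcal{T}_{h,\mathrm{sf}}$, Proposition~\ref{prop:scaled-portability-merged}(i)--(ii) applies at $q$: $f_t$ is Eisenstein at $q$ and $q\nmid g(t)$. Corollary~\ref{cor:scaled-one-prime-certificate-merged} then says that for every orientation $\omega$ and every $\varepsilon\in\{\pm1\}$, the equation $f_{t,\omega}(x)=\varepsilon$ has no solution over $\Z_q$. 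This contradicts Definition~\ref{def:ABS}: ABS unobstructedness requires a single pair $(\omega,\varepsilon)$ that is solvable at every $\ell$, and in particular at $\ell=q$.

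Therefore $\mathcal{S}'_{h,g}$ lies inside the set of $P_g$-free integers. By Lemma~\ref{lem:avoid-divergent-P}, that set has two-sided density $0$. Summing over the finitely many $g$ gives $\delta(\mathcal{S}'_h)=0$. Relative density $0$ in $\mathcal{T}_{h,\mathrm{sf}}$ follows because $\mathcal{T}_{h,\mathrm{sf}}$ has positive density: it contains the square-free integers coprime to $c_0$, which have density $\frac{6}{\pi^2}\prod_{p\mid c_0}\frac{p}{p+1}>0$.

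The only point needing care is the one-prime certificate. It requires the base-changed orientation $\omega_q$ to be an orientation of $\Ocal_t\otimes\Z_q$, and it requires $f_{t,\omega_q}(\theta_t)=\pm g(t)$ to be a $q$-adic unit. Both are already recorded in Proposition~\ref{prop:scaled-portability-merged}(ii)--(iii), so I expect no real obstacle. Note also that $n\ge4$ is used here only through hypothesis~(b), which replaces Lemma~\ref{lem:kummer-nontrivial}. As a refinement, the argument of Proposition~\ref{prop:logpower-fixed-g} would give the quantitative bound $\#\{t\in\mathcal{S}'_h:|t|\le X\}\ll X/(\log X)^{1/(2\varphi(2N))}$.
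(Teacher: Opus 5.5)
Your proposal is correct and follows the paper's proof essentially step for step. You decompose $\mathcal{S}'_h$ over the finitely many index values $g\ge 2$, use hypothesis~(b) with Proposition~\ref{prop:Pg-positive-density-general} to get $\sum_{q\in P_g}1/q=\infty$, apply Corollary~\ref{cor:scaled-one-prime-certificate-merged} to show each such $t$ is $P_g$-free, and conclude with Lemma~\ref{lem:avoid-divergent-P}; your justification of the relative-density clause via the positive density of $\mathcal{T}_{h,\mathrm{sf}}$ fills in a step the paper leaves implicit.
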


\begin{proof}
Fix $g\in G$ with $g\ge 2$ and let $P_g$ be the Chebotarev--Kummer obstruction set
from Definition~\ref{def:Pg-general}.  By \textup{(b)} and Proposition~\ref{prop:Pg-positive-density-general},
$P_g$ has positive density among primes, hence $\sum_{q\in P_g}\frac1q=\infty$.

If $t\in\mathcal{S}'_h$ with $g(t)=g$ and some $q\in P_g$ divides $t$, then
Corollary~\ref{cor:scaled-one-prime-certificate-merged} yields an \textsc{ABS} local obstruction at $q$,
contradiction.  Hence such $t$ avoid all primes in $P_g$, and Lemma~\ref{lem:avoid-divergent-P}
forces density zero.  Taking the finite union over $g\in G$ completes the proof.
\end{proof}

\begin{corollary}
\label{cor:scaled-density-equality-merged}
Assume the hypotheses of Theorem~\ref{thm:scaled-density-zero-merged}.
Let
\[
\mathcal{S}_{\mathrm{mono}}
:=\{t\in\mathcal{T}_{h,\mathrm{sf}}:\ \Ocal_t\ \text{is monogenic}\},
\qquad
\mathcal{S}_{\theta}
:=\{t\in\mathcal{T}_{h,\mathrm{sf}}:\ \Ocal_t=\Z[\theta_t]\}.
\]
Then $\mathcal{S}_{\theta}\subseteq \mathcal{S}_{\mathrm{mono}}$ and the difference
$\mathcal{S}_{\mathrm{mono}}\setminus \mathcal{S}_{\theta}$ has two-sided natural density $0$.
Equivalently, within $\mathcal{T}_{h,\mathrm{sf}}$, monogenicity and $\theta_t$--monogenicity have the same
natural density (whenever either density exists).
\end{corollary}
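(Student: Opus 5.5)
The plan is to reduce Corollary~\ref{cor:scaled-density-equality-merged} to Theorem~\ref{thm:scaled-density-zero-merged}, in the same way Theorem~\ref{thm:density-equality} was deduced from Theorem~\ref{thm:density-zero-ABS}.

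The first step is the inclusion $\mathcal{S}_{\theta}\subseteq\mathcal{S}_{\mathrm{mono}}$. If $\Ocal_t=\Z[\theta_t]$, then $\Ocal_t$ is monogenic, with witness $\theta=\theta_t$. Note that $f_t$ is irreducible of degree $n$ by Proposition~\ref{prop:scaled-portability-merged}(i), since every $t\in\mathcal{T}_{h,\mathrm{sf}}$ has some prime divisor $q$ (because $|t|>1$). Hence $K_t$ has degree $n$ and $\theta_t$ generates it, so the notions are well defined.

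The second step is to place the difference set inside $\mathcal{S}'_h$. Let $t\in\mathcal{S}_{\mathrm{mono}}\setminus\mathcal{S}_\theta$. Then $\Ocal_t\neq\Z[\theta_t]$, so $g(t)\ge 2$, which is condition (i) of Theorem~\ref{thm:scaled-density-zero-merged}. Since $\Ocal_t$ is monogenic, Lemma~\ref{lem:monogenic-implies-no-local-obstruction} says $K_t$ is fixed-sign locally solvable in the sense of Definition~\ref{def:ABS}. This means $K_t$ has no ABS fixed-sign local obstruction, which is condition (ii). Therefore $\mathcal{S}_{\mathrm{mono}}\setminus\mathcal{S}_\theta\subseteq\mathcal{S}'_h$. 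By Theorem~\ref{thm:scaled-density-zero-merged}, $\mathcal{S}'_h$ has two-sided natural density $0$. A subset of a density-zero set has upper density $0$, hence has density $0$.

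The third step is the "equivalently" clause. Write $\mathcal{S}_{\mathrm{mono}}=\mathcal{S}_\theta\sqcup(\mathcal{S}_{\mathrm{mono}}\setminus\mathcal{S}_\theta)$. Then the counting functions $\#\{t:|t|\le X\}$ of $\mathcal{S}_{\mathrm{mono}}$ and $\mathcal{S}_\theta$ differ by $o(X)$. So one density exists if and only if the other does, and in that case they are equal. The same holds for densities relative to $\mathcal{T}_{h,\mathrm{sf}}$: that set has positive density, being a subset of the squarefree integers cut out by a coprimality condition with density $\frac{6}{\pi^2}\prod_{p\mid c_0}\frac{p}{p+1}>0$. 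Dividing by its count therefore preserves the $o(1)$ difference.

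I expect no real obstacle. The only point needing care is that "no local obstruction" in Theorem~\ref{thm:scaled-density-zero-merged}(ii) is literally the fixed-sign local solvability supplied by Lemma~\ref{lem:monogenic-implies-no-local-obstruction}, and that this lemma applies to any number field, in particular to $K_t$.
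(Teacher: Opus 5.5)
Your proposal is correct and follows the paper's own proof: you place $\mathcal{S}_{\mathrm{mono}}\setminus\mathcal{S}_\theta$ inside $\mathcal{S}'_h$ using $g(t)\ge 2$ and Lemma~\ref{lem:monogenic-implies-no-local-obstruction}, then invoke Theorem~\ref{thm:scaled-density-zero-merged}. Your additional remarks fill in details the paper leaves implicit: the inclusion $\mathcal{S}_\theta\subseteq\mathcal{S}_{\mathrm{mono}}$, and the ``equivalently'' clause via counting functions differing by $o(X)$, with $\mathcal{T}_{h,\mathrm{sf}}$ having positive density.
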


\begin{proof}
If $t\in\mathcal{S}_{\mathrm{mono}}\setminus \mathcal{S}_{\theta}$ then $g(t)\ge 2$.
Moreover, monogenicity implies no \textsc{ABS} fixed-sign local obstruction
(Lemma~\ref{lem:monogenic-implies-no-local-obstruction}).
Hence $\mathcal{S}_{\mathrm{mono}}\setminus \mathcal{S}_{\theta}\subseteq \mathcal{S}'_{h}$, which has density $0$
by Theorem~\ref{thm:scaled-density-zero-merged}.
\end{proof}

%----------------------------------------------------------
\subsection{A thin-family construction showing the necessity of Kummer nontriviality}
\label{subsec:thin-kummer-necessary}
%----------------------------------------------------------

Our density--zero results for \emph{square-free} parameter sets use the following sieve closure mechanism:
for each fixed nontrivial index value $g\ge 2$, one produces an obstruction set of primes $P_g$ with
\[
\sum_{q\in P_g}\frac1q=\infty,
\]
and then uses the multiplicative structure of typical square-free integers to force most parameters
with many prime factors to meet $P_g$.  Two qualitatively different ways this can fail are:
\begin{enumerate}[label=\textup{(\alph*)}]
\item \emph{thin parameter sets} (primes, bounded--$\Omega$ almost-primes, polynomial values), where avoidance of a
positive-density set of primes need not be rare \emph{relative} to the thin set; and
\item \emph{Kummer triviality} for a given fixed index value $g$, in which case the Chebotarev--Kummer obstruction
set $P_g$ is empty (away from finitely many primes).
\end{enumerate}
The next lemma and theorem provide a uniform example illustrating \textup{(b)}.

\subsubsection{A thin set of primes with positive relative density among primes}

Fix integers
\[
n\ge 4,\qquad N:=\frac{n(n-1)}2,
\]
and fix an integer $c\ge 2$.
Let $\mathcal P_n$ be the set of primes $q$ such that:
\begin{enumerate}[label=\textup{(\roman*)}]
\item $q\nmid cn$, and
\item for every prime $p\mid n$ one has
\begin{equation}\label{eq:wieferich-avoid}
q^{p-1}\not\equiv 1\pmod{p^2}.
\end{equation}
\end{enumerate}

\begin{lemma}[Density of $\mathcal P_n$ among the primes]
\label{lem:density-Pm}
The set $\mathcal P_n$ has natural density $0$ in $\Z$, but it has a positive natural density inside the primes; in fact,
\[
\lim_{X\to\infty}\frac{\#\{q\le X:\ q\in\mathcal P_n\}}{\#\{q\le X:\ q\ \text{prime}\}}
\;=\;
\prod_{p\mid n}\Bigl(1-\frac1p\Bigr)
\;>\;0.
\]
\end{lemma}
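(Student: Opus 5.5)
The statement has two parts, and I will handle them separately. The first part, density $0$ in $\Z$, is immediate: $\mathcal P_n$ is a set of primes, and by the prime number theorem (even Chebyshev's bound) $\#\{q\le X\}=O(X/\log X)=o(X)$. Hence the two-sided density in the sense of Definition~\ref{def:two-sided-density} is $0$.

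For the relative density I will reduce to Dirichlet's theorem on primes in arithmetic progressions (equivalently, Chebotarev for $\Q(\zeta_M)/\Q$, as recalled in Section~\ref{sec:chebotarev-kummer}). Set $M:=\prod_{p\mid n}p^2$. For $q\nmid n$, condition \eqref{eq:wieferich-avoid} depends only on $q \bmod M$. The set $\{x\in(\Z/M\Z)^\times:\ x^{p-1}\not\equiv 1 \pmod{p^2}\ \text{for all }p\mid n\}$ decomposes, via the Chinese remainder theorem, as a product over $p\mid n$ of the sets
\[
B_p:=\{x\in(\Z/p^2\Z)^\times:\ x^{p-1}\ne 1\}.
\]
Condition (i) only removes the finitely many primes dividing $cn$, which does not affect densities.

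The key computation is $\#B_p/\varphi(p^2)=1-1/p$.

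\emph{Odd $p$.} Here $(\Z/p^2\Z)^\times$ is cyclic of order $p(p-1)$. The kernel of $x\mapsto x^{p-1}$ is the unique subgroup of order $\gcd(p-1,p(p-1))=p-1$, so the excluded proportion is $(p-1)/(p(p-1))=1/p$.

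\emph{$p=2$.} The condition reads $q\not\equiv 1\pmod 4$. This excludes one of the two classes in $(\Z/4\Z)^\times$, a proportion $1/2=1/p$.

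By the Chinese remainder theorem, the admissible set of classes modulo $M$ therefore has size $\varphi(M)\prod_{p\mid n}(1-1/p)$.

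Finally, Dirichlet's theorem gives that each reduced class modulo $M$ contains a proportion $1/\varphi(M)$ of the primes. Summing over the admissible classes yields the limit $\prod_{p\mid n}(1-1/p)>0$.

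There is no real obstacle here. The only step needing care is the group-theoretic count, where the cyclicity of $(\Z/p^2\Z)^\times$ fails for $p=2$ in general; since only modulus $4$ occurs, the direct check above suffices.
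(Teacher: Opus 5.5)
Your proof is correct and follows the paper's argument essentially line for line. Both compute the per-prime proportion $1-1/p$ in $(\Z/p^2\Z)^\times$ (via the unique subgroup of order $p-1$ for odd $p$, and a direct check modulo $4$ for $p=2$), combine the primes $p\mid n$ by CRT modulo $M=\prod_{p\mid n}p^2$, discard the primes dividing $cn$, and finish with Dirichlet/Chebotarev equidistribution. You additionally spell out the density-zero-in-$\Z$ claim via Chebyshev, which the paper leaves implicit; as a small aside, $(\Z/4\Z)^\times$ is in fact cyclic, so your odd-$p$ argument already covers $p=2$ and the separate check is optional.
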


\begin{proof}
Fix a prime $p\mid n$.  For odd $p$, the group $(\Z/p^2\Z)^\times$ is cyclic of order $\varphi(p^2)=p(p-1)$,
hence it has a unique subgroup of order $p-1$.
This subgroup is exactly the set
\[
\mathcal B_p:=\{a\in (\Z/p^2\Z)^\times:\ a^{p-1}\equiv 1\pmod{p^2}\},
\]
so $\#\mathcal B_p=p-1$. Therefore the set of \emph{allowed} reduced residue classes modulo $p^2$ has size
\[
\varphi(p^2)-\#\mathcal B_p=p(p-1)-(p-1)=(p-1)^2,
\]
i.e.\ an allowed proportion $1-\frac1p$.

If $p=2$ divides $n$, then $(\Z/4\Z)^\times=\{\pm1\}$ and the condition $q^{1}\not\equiv 1\pmod4$
is equivalent to $q\equiv 3\pmod4$, again of proportion $1-\frac12$.

Let $M:=\prod_{p\mid n}p^2$. By the Chinese remainder theorem, the simultaneous conditions
$q^{p-1}\not\equiv 1\pmod{p^2}$ for all $p\mid n$ amount to restricting $q$ to a subset of the reduced residue classes
modulo $M$ of proportion $\prod_{p\mid n}(1-\frac1p)$.
By the prime number theorem in arithmetic progressions (equivalently, Chebotarev in $\Q(\zeta_M)$),
primes are equidistributed among reduced residue classes modulo $M$, so the same proportion holds as a natural density
\emph{inside the primes}.  Finally, excluding the finitely many primes dividing $cn$ does not change the limiting density.
\end{proof}

\subsubsection{A uniform fixed-index but \textsc{ABS}-unobstructed construction}

For $q\in\mathcal P_n$, define
\[
f_q(X):=X^n-c^n q\in\Z[X],
\]
let $\theta_q$ be a root, and put $K_q:=\Q(\theta_q)$.
Set also
\[
\alpha_q:=\theta_q/c,
\]
so $\alpha_q^n=q$ and $K_q=\Q(\alpha_q)$.

\begin{theorem}[Thin-family construction: fixed index but \textsc{ABS}-unobstructed]
\label{thm:thin-counterexample}
Fix $n\ge 4$ and $c\ge 2$, and let $\mathcal P_n$ be as above.
By Theorem~\ref{thm:alpha-criterion}, for $q\in\mathcal P_n$ and $\alpha_q^n=q$ one has $\Ocal_{K_q}=\Z[\alpha_q]$.
Then for every $q\in\mathcal P_n$ the following hold:
\begin{enumerate}[label=\textup{(\roman*)}]
\item $\Ocal_{K_q}=\Z[\alpha_q]$ (so $K_q$ is monogenic);
\item the distinguished generator $\theta_q$ never yields a power integral basis:
\[
[\Ocal_{K_q}:\Z[\theta_q]]=c^N>1,
\qquad
N=\frac{n(n-1)}2;
\]
\item $K_q$ is \textsc{ABS}-unobstructed (fixed sign): there exists an orientation $\omega_q$ and a sign
$\varepsilon=+1$ such that the index form equation $f_{K_q,\omega_q}(x)=\varepsilon$ has a solution in $\Z_\ell$
for every prime $\ell$ (including $\ell=\infty$).
\end{enumerate}
In particular, the family $\{K_q\}_{q\in\mathcal P_n}$ is thin as a subset of $\Z$ (since it is indexed by primes),
yet it has positive density among primes by Lemma~\ref{lem:density-Pm}, and it has $100\%$ relative density
of parameters which are \textsc{ABS}-unobstructed while satisfying $[\Ocal_{K_q}:\Z[\theta_q]]>1$.
\end{theorem}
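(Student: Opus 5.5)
Part (i) comes straight from Theorem~\ref{thm:alpha-criterion}, part (ii) from a change-of-basis determinant, and part (iii) from the power basis of part (i). I take them in order.

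\textbf{Part (i).} I apply Theorem~\ref{thm:alpha-criterion} with $m=q$. Since $q$ is prime it is square-free, and $X^n-q$ is Eisenstein at $q$, hence irreducible. For each prime $p\mid n$ we have $p\neq q$ because $q\nmid n$, so $q$ is a unit modulo $p^2$. Then
\[
\vvp_p(q^p-q)=\vvp_p(q)+\vvp_p(q^{p-1}-1)=\vvp_p(q^{p-1}-1).
\]
Fermat gives $\vvp_p(q^{p-1}-1)\ge 1$, and condition \eqref{eq:wieferich-avoid} gives $\vvp_p(q^{p-1}-1)<2$. So this valuation is exactly $1$, the criterion holds, and $\Ocal_{K_q}=\Z[\alpha_q]$.

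\textbf{Part (ii).} Because $\theta_q=c\,\alpha_q$, we have $\theta_q^j=c^j\alpha_q^j$. The change-of-basis matrix from $(1,\alpha_q,\dots,\alpha_q^{n-1})$ to $(1,\theta_q,\dots,\theta_q^{n-1})$ is therefore diagonal with entries $1,c,\dots,c^{n-1}$. Its determinant is $c^{0+1+\cdots+(n-1)}=c^N$. By Lemma~\ref{lem:index-equals-indexform} (the lattice-index part of its proof, applied to $\Z[\theta_q]\subseteq\Z[\alpha_q]=\Ocal_{K_q}$), $[\Ocal_{K_q}:\Z[\theta_q]]=c^N$. This is $>1$ since $c\ge2$. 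Equivalently, Remark~\ref{rem:homog} gives $f_\omega(\theta_q)=c^Nf_\omega(\alpha_q)$.

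\textbf{Part (iii).} I take the orientation $\omega_q:=1\wedge\alpha_q\wedge\cdots\wedge\alpha_q^{n-1}$, which is legitimate by part (i). With this choice $f_{K_q,\omega_q}(\alpha_q)=1$. The global element $\alpha_q\in\Ocal_{K_q}$, viewed in $\Ocal_{K_q}\otimes\Z_\ell$ for every prime $\ell$ (and in $\Ocal_{K_q}\otimes\R$ at $\ell=\infty$), solves $f(x)=+1$. This is the argument of Lemma~\ref{lem:monogenic-implies-no-local-obstruction}.

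\textbf{Closing claims.} The final sentence about thinness and relative density follows by combining (i)--(iii) with Lemma~\ref{lem:density-Pm}. Every $q\in\mathcal P_n$ has both properties, so the relative density is $100\%$.

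\textbf{Expected obstacle.} The main delicate point is the case $p=2$ in (i). There $\vvp_2(q^2-q)=\vvp_2(q-1)$, and the requirement $q\equiv 3\pmod 4$ is exactly what forces this to equal $1$. It is consistent with \eqref{eq:alpha-mono-small-n}.

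A second point is conceptual rather than technical. The family $f_q(X)=X^n-c^nq$ is not Eisenstein in the scaled sense at $q$ when $q\mid c$, but $q\nmid c$ is ensured by the definition of $\mathcal P_n$. More importantly, $g=c^N$ is an $N$th power, so $P_g$ collapses, as in Remark~\ref{rem:kummer-trivial}. This is what the example is meant to illustrate, and no further verification is needed for it.
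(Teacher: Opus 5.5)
Your proposal is correct and follows the paper's proof essentially step for step: (i) comes from the $\alpha$-criterion of Theorem~\ref{thm:alpha-criterion}, (ii) from the diagonal change-of-basis matrix $\mathrm{diag}(1,c,\dots,c^{n-1})$ with determinant $c^N$, and (iii) from the orientation $1\wedge\alpha_q\wedge\cdots\wedge\alpha_q^{n-1}$ with the global point $x=\alpha_q$ as the local solution at every $\ell$. The only addition is that you explicitly check the criterion's hypothesis $\vvp_p(q^p-q)=1$ for each $p\mid n$, using Fermat together with condition \eqref{eq:wieferich-avoid} and including the case $p=2$; the paper only asserts this.
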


\begin{proof}
(1) This is exactly the conclusion of Theorem~\ref{thm:alpha-criterion} applied to $\alpha_q^n=q$.

(2) Since $\theta_q=c\alpha_q$, we have inclusions of orders
\[
\Z[\theta_q]\subseteq \Z[\alpha_q]=\Ocal_{K_q}.
\]
As $\Z$-modules,
\[
\Z[\alpha_q]=\Z\langle 1,\alpha_q,\alpha_q^2,\dots,\alpha_q^{n-1}\rangle,
\qquad
\Z[\theta_q]=\Z\langle 1,c\alpha_q,c^2\alpha_q^2,\dots,c^{n-1}\alpha_q^{n-1}\rangle.
\]
With respect to these bases, the inclusion $\Z[\theta_q]\hookrightarrow \Z[\alpha_q]$ is given by the diagonal
change-of-basis matrix $\mathrm{diag}(1,c,c^2,\dots,c^{n-1})$, hence
\[
[\Z[\alpha_q]:\Z[\theta_q]]
=\prod_{j=0}^{n-1}c^j
=c^{\sum_{j=0}^{n-1}j}
=c^{n(n-1)/2}
=c^N.
\]
Using $\Ocal_{K_q}=\Z[\alpha_q]$ from (1), we obtain $[\Ocal_{K_q}:\Z[\theta_q]]=c^N>1$.

(3) Since $\Ocal_{K_q}=\Z[\alpha_q]$, choose the standard orientation
\[
\omega_q:=1\wedge \alpha_q\wedge \alpha_q^2\wedge\cdots\wedge \alpha_q^{n-1}\in \bigwedge^n \Ocal_{K_q}.
\]
For this orientation, the associated index form satisfies $f_{K_q,\omega_q}(\alpha_q)=+1$.
Therefore the same integral point witnessing monogenicity gives a solution to $f_{K_q,\omega_q}(x)=+1$
over $\Z_\ell$ for every prime $\ell$ (including $\ell=\infty$), i.e.\ there is no fixed-sign local obstruction in the
sense of \textsc{ABS}.
\end{proof}

\begin{remark}[Why the Chebotarev--Kummer obstruction vanishes here]
\label{rem:kummer-trivial}
In the Chebotarev--Kummer obstruction mechanism for a fixed index value $g$, the relevant Kummer extension over
$\Q(\zeta_{2N})$ becomes trivial if $g\in (\Q^\times)^N$.
In the above construction,
\[
g(\theta_q)=[\Ocal_{K_q}:\Z[\theta_q]]=c^N\in (\Q^\times)^N,
\]
so $\Q(\zeta_{2N},g(\theta_q)^{1/N})=\Q(\zeta_{2N})$ and the corresponding Chebotarev obstruction set is empty
away from finitely many primes dividing $c$.
This shows that the Kummer nontriviality hypothesis in Theorem~\ref{thm:scaled-density-zero-merged} is essential.
\end{remark}

%----------------------------------------------------------
\subsection{Positive-density \textsc{ABS}-unobstructed families and a fixed-index twist}
\label{subsec:positive-density-ABS-families}
%----------------------------------------------------------

We now record a (thin) one-parameter family with \emph{positive natural density} of monogenic specializations,
together with a scaling twist which forces the \emph{distinguished} generator to have a fixed nontrivial index.

\subsubsection{A trinomial family with positive density of monogenic specializations}

Consider the one--parameter family
\[
f_t(X)\ :=\ X^n+tX+t\ \in\ \Z[X],\qquad K_t\ :=\ \Q(\alpha_t),
\]
where $\alpha_t$ is any root of $f_t$.

\begin{lemma}[Discriminant of $X^n+AX+B$]
\label{lem:trinomial-disc}
For $n\ge 2$ and $A,B\in\Z$, one has
\[
\disc(X^n+AX+B)
=
(-1)^{n(n-1)/2}n^n B^{n-1}
+
(-1)^{(n-1)(n-2)/2}(n-1)^{n-1}A^n.
\]
In particular, with
\[
C_{0,n}:=(-1)^{n(n-1)/2}n^n,\qquad C_{1,n}:=(-1)^{(n-1)(n-2)/2}(n-1)^{n-1},
\qquad
L_n(t):=C_{0,n}+C_{1,n}t,
\]
we have
\begin{equation}
\label{eq:disc-ft}
\disc(f_t)=t^{n-1}L_n(t).
\end{equation}
\end{lemma}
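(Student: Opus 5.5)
The formula is classical: it is the discriminant of a trinomial $X^n+AX+B$. I would derive it directly from the resultant description
\[
\disc(f)=(-1)^{n(n-1)/2}\Res(f,f') = (-1)^{n(n-1)/2}\prod_{f(\beta)=0} f'(\beta),
\]
valid for monic $f$ of degree $n$, with $\beta$ running over the roots of $f=X^n+AX+B$ in an algebraic closure (with multiplicity). The plan is to compute $\prod_\beta f'(\beta)$ in closed form using the relation $\beta^n=-A\beta-B$. This lets us rewrite $f'(\beta)$ as a linear fractional expression in $\beta$.

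\textbf{Step 1.} For a root $\beta$ we have $n\beta^{n-1}=n\beta^n/\beta=-n(A\beta+B)/\beta$. Hence
\[
f'(\beta)=n\beta^{n-1}+A=\frac{-(n-1)A\beta-nB}{\beta}.
\]
Here we should first assume $B\neq 0$, so that no root is zero. The case $B=0$ follows either by continuity, since both sides are polynomial identities in $A,B$, or by a direct check.

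\textbf{Step 2.} Take products over the roots. We have $\prod_\beta\beta=(-1)^nB$. Writing $c=-(n-1)A$, we have
\[
\prod_\beta\bigl(c\beta-nB\bigr)=c^n\prod_\beta\Bigl(\beta-\frac{nB}{c}\Bigr)=(-1)^n c^n f\!\left(\frac{nB}{c}\right).
\]
Expanding $c^n f(nB/c)=(nB)^n+A\,nB\,c^{n-1}+Bc^n$ gives, after dividing by $(-1)^nB$, the expression
\[
n^nB^{n-1}+\bigl(n(-1)^{n-1}(n-1)^{n-1}+(-1)^n(n-1)^n\bigr)A^n.
\]
The coefficient of $A^n$ collapses to $(-1)^{n-1}(n-1)^{n-1}$. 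The division by $c$ requires $A\neq0$, and again polynomial identity handles the degenerate case.

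\textbf{Step 3.} Multiply by $(-1)^{n(n-1)/2}$. The $A^n$-term acquires the sign $(-1)^{n(n-1)/2+n-1}$, and since $n(n-1)/2+(n-1)\equiv(n-1)(n-2)/2 \pmod 2$ (their difference is $2(n-1)$), this matches the claimed formula. Specializing to $A=B=t$ gives $t^{n-1}\bigl(C_{0,n}+C_{1,n}t\bigr)$, which is \eqref{eq:disc-ft}.

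\textbf{Main obstacle.} The only real care needed is the sign bookkeeping in Steps 2 and 3, together with justifying the genericity assumptions $A,B\neq 0$. I would handle the latter by noting that both sides lie in $\Z[A,B]$ and agree on a Zariski-dense set.
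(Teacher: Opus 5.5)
Your proposal is correct and is exactly the ``standard resultant computation'' $\disc(f)=(-1)^{n(n-1)/2}\Res(f,f')$ that the paper invokes, carried out explicitly via $f'(\beta)=\bigl(-(n-1)A\beta-nB\bigr)/\beta$; the sign bookkeeping in Steps 2--3 and the polynomial-identity argument for the case $AB=0$ both check out. The one slip is in wording: what you divide by $\prod_\beta\beta=(-1)^nB$ is $(-1)^n c^n f(nB/c)$, not $c^n f(nB/c)$, but the expression you state after that division is correct.
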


\begin{proof}
This is the standard resultant computation:
$\disc(X^n+AX+B)=(-1)^{n(n-1)/2}\Res(X^n+AX+B,\,nX^{n-1}+A)$.
\end{proof}

Define the arithmetic parameter set
\[
\mathcal T_n\ :=\ \Bigl\{t\in\Z:\ |t|>1,\ \gcd\bigl(t,n(n-1)\bigr)=1,\ \text{and }t\,L_n(t)\text{ is squarefree}\Bigr\}.
\]

\begin{theorem}[Positive density of monogenic (hence \textsc{ABS}--unobstructed) specializations]
\label{thm:ft-monogenic-positive-density}
For every $n\ge 4$, the set $\mathcal T_n$ has positive natural density in $\Z$.
Moreover, for every $t\in\mathcal T_n$, the order $\Z[\alpha_t]$ is maximal:
\[
\Ocal_{K_t}=\Z[\alpha_t].
\]
In particular, $K_t$ is monogenic and hence \textsc{ABS}--unobstructed for all $t\in\mathcal T_n$.
\end{theorem}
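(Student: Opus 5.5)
The plan is to prove the maximality statement first, by a prime-by-prime index argument driven by the discriminant formula \eqref{eq:disc-ft}. The density statement then reduces to an elementary square-free sieve for the product of two linear forms $t\cdot L_n(t)$, with the congruence condition $\gcd(t,n(n-1))=1$ folded in.

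\emph{Maximality.} Fix $t\in\mathcal T_n$. Pick a prime $q\mid t$, which exists since $|t|>1$. Because $t$ is square-free, $v_q(t)=1$, so $f_t=X^n+tX+t$ is Eisenstein at $q$. Hence $f_t$ is irreducible, $[K_t:\Q]=n$, and the discriminant--index relation
\[
\disc(f_t)=[\Ocal_{K_t}:\Z[\alpha_t]]^2\,\disc(\Ocal_{K_t})
\]
applies. It therefore suffices to show $v_p([\Ocal_{K_t}:\Z[\alpha_t]])=0$ for every prime $p\mid\disc(f_t)=t^{n-1}L_n(t)$. I will first observe that $\gcd(t,L_n(t))$ divides $C_{0,n}=\pm n^n$, and that $\gcd(t,n)=1$; so $t$ and $L_n(t)$ are coprime, and each such $p$ divides exactly one of them.
\begin{itemize}
\item If $p\mid t$: as above, $f_t$ is Eisenstein at $p$. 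Exactly as in Lemma~\ref{lem:eisenstein-prime-does-not-divide-g}, $\Z_p[\alpha_t]$ is then the maximal order of $K_t\otimes\Q_p$, so $p$ does not divide the index.
\item If $p\mid L_n(t)$: then $p\nmid t$, and square-freeness of $tL_n(t)$ gives $v_p(\disc(f_t))=v_p(L_n(t))=1$. Since the square of the index divides the discriminant, $v_p(\text{index})=0$.
\end{itemize}
Hence $\Ocal_{K_t}=\Z[\alpha_t]$. Monogenicity and \textsc{ABS}-unobstructedness then follow from Lemma~\ref{lem:monogenic-implies-no-local-obstruction}.

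\emph{Positive density.} Let $F(t):=t\,L_n(t)$, a product of two non-proportional linear forms; note $C_{1,n}\ne0$ and $C_{0,n}\ne0$. The set $\mathcal T_n$ is, up to the finitely many $|t|\le1$, the set of $t$ satisfying the congruence conditions $p\nmid t$ for $p\mid n(n-1)$ together with $p^2\nmid F(t)$ for all $p$.

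For a parameter $\xi$, the conditions at primes $p\le\xi$ are periodic modulo $M_\xi=\prod_{p\le\xi}p^2$. By CRT they define a set of density $\prod_{p\le\xi}\bigl(1-\rho(p)/p^2\bigr)$, where $\rho(p)$ is the number of excluded residues mod $p^2$.

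I then check that each local factor is positive and that the product converges.
\begin{itemize}
\item If $p\nmid n(n-1)C_{1,n}$: $F$ has at most two roots mod $p^2$ counted via its two linear factors (note $p\nmid C_{1,n}$ here), so $\rho(p)\le 2$. The product $\prod(1-2/p^2)$ converges to a positive limit.
\item If $p\mid n$: take $p\nmid t$. Then $L_n(t)\equiv C_{1,n}t\not\equiv0\pmod p$, since $p\nmid n-1$. So every unit residue is allowed, and the local factor is $1-1/p>0$.
\item If $p\mid n-1$: take $p\nmid t$. Then $L_n(t)\equiv C_{0,n}\not\equiv0\pmod p$, so again the local factor is $1-1/p>0$.
\item The finitely many remaining primes dividing $C_{1,n}$ but not $n(n-1)$ are in fact empty, since $C_{1,n}=\pm(n-1)^{n-1}$.
\end{itemize}

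\emph{Tail estimate.} The main (though routine) obstacle is bounding the large primes uniformly: the number of $|t|\le X$ with $p^2\mid F(t)$ for some $p>\xi$.
\begin{itemize}
\item The condition $p^2\mid t$ needs $p\le X^{1/2}$ and contributes $\ll\sum_{p>\xi}X/p^2+X^{1/2}$.
\item The condition $p^2\mid L_n(t)$ (for $p\nmid C_{1,n}$) puts $t$ in one residue class mod $p^2$, and needs $p\ll X^{1/2}$ because $|L_n(t)|\ll X$. It contributes $\sum_{\xi<p\ll X^{1/2}}(X/p^2+1)\ll X/\xi+X^{1/2}$.
\end{itemize}
Letting $X\to\infty$ and then $\xi\to\infty$ shows that the density of $\mathcal T_n$ exists and equals the convergent positive Euler product. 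Because both factors of $F$ are linear, no Hooley-type input is needed.
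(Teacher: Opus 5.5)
Your proposal is correct and follows the paper's own proof: the Eisenstein property at primes $q\mid t$, together with $v_p(\disc f_t)=1$ for $p\mid L_n(t)$, gives maximality, and a square-free sieve for the two linear factors of $t\,L_n(t)$ gives positive density. The differences are minor. You cite the general Eisenstein maximality fact where the paper checks directly that $(q,\alpha_t)=(\alpha_t)$. You also fold the condition $\gcd(t,n(n-1))=1$ into the local factors at $p\mid n(n-1)$ and add an explicit tail bound, which makes rigorous the paper's brief remark that this congruence condition ``preserves positivity of density.''
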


\begin{proof}
Fix $t\in\mathcal T_n$ and put $I_t:=[\Ocal_{K_t}:\Z[\alpha_t]]$.

Let $q\mid t$ be prime. Since $t$ is squarefree and $\gcd(t,n(n-1))=1$, we have $v_q(t)=1$ and $q\nmid n(n-1)$.
The polynomial $f_t(X)=X^n+tX+t$ is Eisenstein at $q$, hence irreducible over $\Q$.

Fix a prime $q\mid t$ and set
\[
R_q:=\Z_q[\alpha_t]\;\cong\;\Z_q[X]/(f_t).
\]
Since $f_t$ is Eisenstein at $q$, it is irreducible over $\Q_q$, hence $R_q$ is a domain, finite over the DVR $\Z_q$,
and therefore $\dim(R_q)=1$. Reducing modulo $q$ gives
\[
R_q/qR_q \cong \F_q[X]/(X^n),
\]
so $R_q$ is local with maximal ideal $\mathfrak m_q=(q,\alpha_t)$.

In $R_q$ we have $\alpha_t^n=-t(\alpha_t+1)$, and since $\alpha_t\in\mathfrak m_q$ we have
$\alpha_t+1\equiv 1\pmod{\mathfrak m_q}$, hence $\alpha_t+1\in R_q^\times$.
Thus $t\in(\alpha_t)$, and writing $t=qu$ with $u\in\Z_q^\times$ gives $q\in(\alpha_t)$.
Consequently $\mathfrak m_q=(q,\alpha_t)=(\alpha_t)$ is principal.

Since $R_q$ is a $1$-dimensional Noetherian local domain with principal maximal ideal, it is a DVR.
In particular $R_q$ is integrally closed, hence equals the ring of integers of $\Q_q(\alpha_t)$.
Therefore $\Ocal_{K_t}\otimes\Z_q \cong R_q$, so $q\nmid I_t$.

If $p\mid I_t$ then $p^2\mid \disc(f_t)$ by the index--discriminant relation
$\disc(f_t)=\disc(K_t)\,I_t^2$.
If $p\nmid t$, then \eqref{eq:disc-ft} implies $p^2\mid L_n(t)$.
But $tL_n(t)$ is squarefree by hypothesis, hence $L_n(t)$ is squarefree; contradiction.
Thus $p\nmid I_t$ for all primes $p\nmid t$.

Combining above observations yields $I_t=1$, hence $\Ocal_{K_t}=\Z[\alpha_t]$ and $K_t$ is monogenic.

Write
\[
F(t):=t\,L_n(t)=t\,(C_{0,n}+C_{1,n}t)\in\Z[t].
\]
Since $\gcd(C_{0,n},C_{1,n})=1$, the two linear factors $t$ and $L_n(t)$ are coprime in $\Z[t]$,
so $F$ is a squarefree polynomial.

For each prime $\ell$, let
\[
\rho(\ell^2):=\#\{\,a\in\Z/\ell^2\Z:\ \ell^2\mid F(a)\,\}.
\]
A direct congruence count gives:
\begin{enumerate}[label=\textup{(\roman*)}]
\item If $\ell\mid n$ (equivalently $\ell\mid C_{0,n}$), then $C_{1,n}\not\equiv 0\pmod\ell$ and
$L_n(a)\equiv C_{1,n}a\pmod\ell$. Hence $\ell^2\mid aL_n(a)$ iff $\ell\mid a$, so $\rho(\ell^2)=\ell$.
\item If $\ell\mid (n-1)$ (equivalently $\ell\mid C_{1,n}$), then $C_{0,n}\not\equiv 0\pmod\ell$, so
$L_n(a)\not\equiv 0\pmod\ell$ for all $a$, and therefore $\ell^2\mid aL_n(a)$ iff $\ell^2\mid a$.
Thus $\rho(\ell^2)=1$.
\item If $\ell\nmid n(n-1)$, then $C_{0,n}$ and $C_{1,n}$ are both units modulo $\ell^2$ and the two linear
congruences $a\equiv 0\pmod{\ell^2}$ and $L_n(a)\equiv 0\pmod{\ell^2}$ each contribute exactly one solution.
Since they are distinct (because $C_{0,n}\not\equiv 0\pmod\ell$), we have $\rho(\ell^2)=2$.
\end{enumerate}

In particular, for every prime $\ell$ we have $\rho(\ell^2)<\ell^2$, and the Euler product
\[
\mathfrak S:=\prod_{\ell}\Bigl(1-\frac{\rho(\ell^2)}{\ell^2}\Bigr)
\]
converges absolutely to a positive real number (since $\rho(\ell^2)\ll 1$ for all but finitely many $\ell$,
and $\sum_\ell \ell^{-2}<\infty$). By the standard squarefree sieve/inclusion--exclusion for polynomial values
in one variable (applied to $F(t)$), the set
\[
\{t\in\Z:\ F(t)\ \text{is squarefree}\}
\]
has natural density $\mathfrak S>0$. Intersecting with the congruence condition $\gcd(t,n(n-1))=1$
removes only finitely many residue classes modulo $\prod_{\ell\mid n(n-1)}\ell$, hence preserves positivity
of density. Finally, excluding $|t|\le 1$ removes finitely many integers. Therefore $\mathrm{dens}(\mathcal T_n)>0$.

\end{proof}

\subsubsection{A fixed-index twist of the distinguished generator}

Fix an integer $c\ge 2$, and define
\[
F_{t,c}(X)\ :=\ X^n+c^{\,n-1}tX+c^{\,n}t\ \in\ \Z[X],\qquad K_{t,c}:=\Q(\theta_{t,c}),
\]
where $\theta_{t,c}$ is any root of $F_{t,c}$.
If $\alpha_t:=\theta_{t,c}/c$, then $\alpha_t$ satisfies $f_t(\alpha_t)=0$ and hence $K_{t,c}=K_t$.

\begin{theorem}[Fixed index but positive \textsc{ABS}--unobstructed density]
\label{thm:fixed-index-twist}
Fix $n\ge 4$ and $c\ge 2$, and set
\[
\mathcal T_{n}(c)\ :=\ \{t\in \mathcal T_n:\ \gcd(t,c)=1\}.
\]
Then $\mathrm{dens}(\mathcal T_n(c))>0$, and for every $t\in \mathcal T_n(c)$ the field $K_{t,c}$ is \textsc{ABS}--unobstructed while
the distinguished generator $\theta_{t,c}$ satisfies
\[
[\Ocal_{K_{t,c}}:\Z[\theta_{t,c}]]\ =\ c^{N}\ >\ 1,
\qquad N=\frac{n(n-1)}2.
\]
In particular, the one-parameter family $\{K_{t,c}\}_{t\in\Z}$ contains a positive-density set of parameters $t$
for which \emph{(i)} the \textsc{ABS} fixed--sign local condition holds and \emph{(ii)} $\Z[\theta_{t,c}]$ is a proper
suborder of fixed index $c^{N}$.
\end{theorem}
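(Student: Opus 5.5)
The plan has three parts: positivity of the density of $\mathcal T_n(c)$, the index computation, and ABS-unobstructedness. For the density, Theorem~\ref{thm:ft-monogenic-positive-density} shows that $\{t: tL_n(t)\ \text{squarefree}\}$ has density $\mathfrak S>0$, via local densities $1-\rho(\ell^2)/\ell^2$. The conditions $\gcd(t,n(n-1))=1$ and $\gcd(t,c)=1$ are both congruence conditions modulo primes $\ell$, so I will redo that sieve with the extra local condition $\ell\nmid t$ for $\ell\mid cn(n-1)$. At each such prime the local factor becomes the proportion of residues $a\bmod \ell^2$ with $\ell\nmid a$ and $\ell^2\nmid L_n(a)$.

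\textbf{Main obstacle: positivity of these modified local factors.} I need each of them to be positive. If $\ell\mid n$ or $\ell\mid n-1$, the count from the proof of Theorem~\ref{thm:ft-monogenic-positive-density} already shows $\ell^2\nmid L_n(a)$ whenever $\ell\nmid a$. If $\ell\mid c$ but $\ell\nmid n(n-1)$, at most one class mod $\ell^2$ has $\ell^2\mid L_n(a)$, while $\ell(\ell-1)$ classes have $\ell\nmid a$. Since $\ell(\ell-1)\ge 2>1$, some class survives. This is a finite modification of an absolutely convergent Euler product, so the density remains positive by the standard one-variable squarefree sieve for $F(t)=tL_n(t)$.

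For the index, fix $t\in\mathcal T_n(c)$. Theorem~\ref{thm:ft-monogenic-positive-density} gives $\Ocal_{K_t}=\Z[\alpha_t]$. Since $\theta_{t,c}=c\alpha_t$ and $K_{t,c}=K_t$, the lattice $\Z[\theta_{t,c}]$ has basis $c^j\alpha_t^j$ for $0\le j\le n-1$. This is the same diagonal change-of-basis computation as part (ii) of Theorem~\ref{thm:thin-counterexample}, and it gives index $\prod_j c^j=c^N>1$.

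For ABS-unobstructedness, $K_{t,c}=K_t$ is monogenic, so Lemma~\ref{lem:monogenic-implies-no-local-obstruction} gives fixed-sign local solvability directly. Concretely, take the orientation $\omega=1\wedge\alpha_t\wedge\cdots\wedge\alpha_t^{n-1}$ and $\varepsilon=+1$, with witness $x=\alpha_t$ at every $\ell$. This proves the "in particular" clause. I will also note that the hypothesis $\gcd(t,c)=1$ is used only in the density step, where it keeps the family thick.
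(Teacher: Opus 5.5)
Your proposal is correct and follows the paper's proof: you use $K_{t,c}=K_t$ and Theorem~\ref{thm:ft-monogenic-positive-density} to get monogenicity, hence ABS-unobstructedness via Lemma~\ref{lem:monogenic-implies-no-local-obstruction}, and the diagonal matrix $\mathrm{diag}(1,c,\dots,c^{n-1})$ to get index $c^N$. The only difference is the density step. The paper just asserts that imposing $\gcd(t,c)=1$ removes finitely many residue classes and therefore preserves positivity, which does not follow on its own. You instead check that the modified local factors at $\ell\mid cn(n-1)$ remain positive, which is exactly the justification that assertion needs. One small correction: the condition $\gcd(t,c)=1$ is never actually used, since the index and ABS arguments hold for all $t\in\mathcal T_n$. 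So it does not ``keep the family thick''; it is a restriction that only has to be shown harmless.
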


\begin{proof}
Since $\mathcal T_n$ has positive density and $\gcd(t,c)=1$ excludes only finitely many residue classes modulo primes dividing $c$,
we have $\mathrm{dens}(\mathcal T_n(c))>0$.

Fix $t\in\mathcal T_n(c)$ and put $\alpha_t=\theta_{t,c}/c$.
By Theorem~\ref{thm:ft-monogenic-positive-density}, $\Ocal_{K_{t,c}}=\Ocal_{K_t}=\Z[\alpha_t]$; in particular,
$K_{t,c}$ is monogenic and hence \textsc{ABS}--unobstructed.

As $\Z$--modules,
\[
\Z[\alpha_t]\ =\ \Z\langle 1,\alpha_t,\alpha_t^2,\dots,\alpha_t^{n-1}\rangle,
\qquad
\Z[\theta_{t,c}]\ =\ \Z\langle 1,c\alpha_t,c^2\alpha_t^2,\dots,c^{n-1}\alpha_t^{n-1}\rangle.
\]
Relative to the basis $(1,\alpha_t,\dots,\alpha_t^{n-1})$ of $\Z[\alpha_t]$,
the basis $(1,\theta_{t,c},\dots,\theta_{t,c}^{\,n-1})$ is obtained by the diagonal change--of--basis matrix
$\mathrm{diag}(1,c,c^2,\dots,c^{n-1})$, whose determinant is $c^{1+2+\cdots+(n-1)}=c^{N}$.
Hence
\[
[\Ocal_{K_{t,c}}:\Z[\theta_{t,c}]]=[\Z[\alpha_t]:\Z[\theta_{t,c}]]=c^{N}>1.
\]
\end{proof}

\begin{corollary}[Concrete quartic instance]
\label{cor:quartic-fixed-index}
Take $n=4$ and $c=2$.  Then for every $t\in\mathcal T_4(2)=\mathcal T_4$ we have
\[
F_{t,2}(X)=X^4+8tX+16t,\qquad
[\Ocal_{K_{t,2}}:\Z[\theta_{t,2}]]=2^{6}=64,
\]
and $K_{t,2}$ is \textsc{ABS}--unobstructed.  Moreover,
\[
\disc(X^4+tX+t)=t^3(256-27t),\qquad \disc(F_{t,2})=2^{12}\,t^3(256-27t).
\]
\end{corollary}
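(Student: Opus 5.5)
The plan is to specialize Theorem~\ref{thm:fixed-index-twist} and Lemma~\ref{lem:trinomial-disc} to $n=4$, $c=2$. No new arithmetic input is needed: every claim is either a substitution into a previously proved statement or a short consistency check.

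First I would show the equality $\mathcal T_4(2)=\mathcal T_4$. By definition, $t\in\mathcal T_4$ requires $\gcd(t,n(n-1))=\gcd(t,12)=1$. This forces $t$ to be odd, so the extra condition $\gcd(t,2)=1$ in $\mathcal T_4(2)$ is automatic. Next, substituting $c=2$, $n=4$ into $F_{t,c}(X)=X^n+c^{n-1}tX+c^nt$ gives $F_{t,2}(X)=X^4+8tX+16t$. With $N=\frac{4\cdot 3}{2}=6$, Theorem~\ref{thm:fixed-index-twist} then yields, for every $t\in\mathcal T_4(2)$, that $K_{t,2}$ is \textsc{ABS}--unobstructed and $[\Ocal_{K_{t,2}}:\Z[\theta_{t,2}]]=2^6=64$. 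The underlying reason is that $\Ocal_{K_t}=\Z[\alpha_t]$ by Theorem~\ref{thm:ft-monogenic-positive-density}, and that $\theta_{t,2}=2\alpha_t$ gives the diagonal change-of-basis matrix $\mathrm{diag}(1,2,4,8)$.

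For the discriminants I would apply Lemma~\ref{lem:trinomial-disc} with $n=4$. The constants are $C_{0,4}=(-1)^{6}4^4=256$ and $C_{1,4}=(-1)^{3}3^3=-27$. This gives $L_4(t)=256-27t$ and hence $\disc(X^4+tX+t)=t^3(256-27t)$. For $F_{t,2}$ I would take $A=8t$ and $B=16t$ in the same formula, obtaining $256\cdot(16t)^3-27\cdot(8t)^4=2^{12}t^3(256-27t)$. This can be cross-checked in two ways. The scaling identity $F_{t,2}(X)=2^4f_t(X/2)$ multiplies the discriminant by $2^{n(n-1)}=2^{12}$. Independently, the index--discriminant relation gives $\disc(F_{t,2})=64^2\disc(K_{t,2})=2^{12}\disc(f_t)$, using $\disc(K_t)=\disc(f_t)$ from the maximality of $\Z[\alpha_t]$.

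There is no real obstacle here. The only point needing care is the sign bookkeeping in $C_{1,4}$, namely $(-1)^{(n-1)(n-2)/2}=(-1)^3=-1$. The two independent computations of $\disc(F_{t,2})$ confirm this sign.
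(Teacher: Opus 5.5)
Your proposal is correct and matches the paper, which states this corollary without a separate proof as the specialization $n=4$, $c=2$, $N=6$ of Theorem~\ref{thm:fixed-index-twist} and Lemma~\ref{lem:trinomial-disc}; your remark that $\gcd(t,12)=1$ already forces $t$ odd, so that $\mathcal T_4(2)=\mathcal T_4$, supplies the one implicit step. One minor caveat: your two cross-checks only confirm the factor $2^{12}$ (the substitution $A=8t$, $B=16t$ scales $A^4$ and $B^3$ by the same $2^{12}$), so they cannot detect the sign of $C_{1,4}$, which instead rests on the lemma, i.e.\ on the classical formula $\disc(X^4+pX+q)=256q^3-27p^4$.
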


The fixed-index twist produces a positive-density set of \textsc{ABS}--unobstructed fields in which the \emph{distinguished}
generator has fixed index $>1$, so any statement of the form
\textsc{ABS}--unobstructed $\Rightarrow$ distinguished generator has index $1$ fails in general one-parameter families.

\begin{remark}
To conclude, we list other classes of families beyond pure fields and literal Eisenstein primes that our method can apply.
\begin{enumerate}

\item \textbf{Kummer families attached to polynomial values.}
Replace $x^n-m$ by $x^n-F(\mathbf{t})$ with $\mathbf{t}\in\mathbb{Z}^r$.
Primes $\mathfrak{q}\mid F(\mathbf{t})$ with valuation $1$ are portable in the same way as in the pure case,
and for $r\ge 2$ one can combine portability with geometric density tools for squarefree values.
Modern work on squarefree discriminants and maximality in large families suggests many opportunities
to embed rigid portable-prime subfamilies inside high-dimensional spaces.

\item \textbf{Multi-parameter Eisenstein divisors and higher-dimensional sieves} The present closure step is essentially one-dimensional (sieving along a squarefree or almost squarefree
parameter). In genuinely higher-dimensional coefficient spaces, the appropriate replacement is the
Ekedahl--Bhargava \emph{geometric sieve}, which is designed to control integral points subject to
congruence conditions that vary with the prime. A schematic model is the two-parameter family
\[
(a,b)\in\mathbb{Z}^2,\qquad
f_{a,b}(x)=x^n + a\,h(x) + b,
\]
with fixed $h\in\mathbb{Z}[x]$. For many choices of $h$, the condition that $f_{a,b}$ be Eisenstein at a prime $q$
can be expressed by explicit congruences (typically modulo $q^2$), e.g.\ of the form
\[
b\equiv 0 \pmod q,\quad b\not\equiv 0 \pmod{q^2},
\]
together with $q$-divisibility constraints on the remaining coefficients. Thus, for each prime $q$, Eisenstein at $q$ cuts out a congruence locus in parameter space---an
\emph{Eisenstein divisor} in the $(a,b)$-plane---and one is led to count parameters meeting one (or many) such prime-indexed loci.

\item \textbf{Newton polygon portability (beyond Eisenstein).}
For many sparse families, local integral-basis and ramification structure can be read from higher Newton polygons. This suggests replacing Eisenstein at $\mathfrak{q}$ by a Newton-polygon portability criterion guaranteeing that the completed order is a DVR generated by an explicit uniformizer.
See \cite{GMN2015} for a framework connecting Newton polygons, discriminants, and integral bases.

\end{enumerate}
\end{remark}

\bibliographystyle{alpha}
\bibliography{References}

\end{document}